\theoremstyle{plain}
\newtheorem{theorem}{Theorem}[section]
\newtheorem{proposition}[theorem]{Proposition}
\theoremstyle{remark}
\newtheorem{remark}[theorem]{Remark}
\def\C{{\mathbf C}}% complex numbers
\def\R{{\mathbf R}}% real numbers
\def\N{{\mathbf N}}% nonnegative integers
\def\T{{\mathbf T}}% torus
\def\F{\mathcal F}% Fourier transform
\def\O{\mathcal O}
\def\virgp{\raise 2pt\hbox{,}}
\def\bu{{\bf u}}
\def\({\left(}
\def\){\right)}
\def\<{\left\langle}
\def\>{\right\rangle}
\def\le{\leqslant}
\def\ge{\geqslant}
\def\Eq#1#2{\mathop{\sim}\limits_{#1\rightarrow#2}}
\def\Tend#1#2{\mathop{\longrightarrow}\limits_{#1\rightarrow#2}}
\def\d{{\partial}}
\def\l{\lambda}
\def\si{{\sigma}}
\def\eps{\varepsilon}
\DeclareMathOperator{\IM}{Im}
\DeclareMathOperator{\DIV}{\mathrm{div}}
\numberwithin{equation}{section}
\begin{document}

\title[NLS and semiclassical limit]{An asymptotic preserving
  approach for nonlinear Schr\"odinger equation
  in the semiclassical limit}
\author[R. Carles]{R\'emi Carles}
\email{Remi.Carles@math.cnrs.fr}
\author[B. Mohammadi]{Bijan Mohammadi}
\email{Bijan.Mohammadi@univ-montp2.fr}
\address{Univ. Montpellier~2\\Math\'ematiques
\\CC~051\\F-34095 Montpellier}
\address{CNRS, UMR 5149\\  F-34095 Montpellier\\ France}
\thanks{This work was supported by the French ANR project
  R.A.S. (ANR-08-JCJC-0124-01)}
\begin{abstract}
We study numerically the semiclassical limit for the nonlinear
Schr\"odinger equation thanks to a modification of the Madelung
transform due to E.~Grenier. This approach is naturally asymptotic
preserving, and allows for the presence of vacuum. Even if the mesh
size and the time step do not depend on the 
Planck constant, we recover the position and current densities in the
semiclassical limit, with a numerical rate of convergence in
accordance with the theoretical
results, before shocks appear in the limiting Euler
equation. By using simple projections, the mass and the momentum of
the solution are well preserved 
by the numerical scheme,
while the variation of the energy is not negligible
numerically. Experiments suggest that beyond the critical time for the
Euler equation, Grenier's approach yields smooth but highly
oscillatory terms. 
\end{abstract}
\maketitle

\section{Introduction}
\label{sec:intro}

We consider the cubic nonlinear equation
\begin{equation}
  \label{eq:NLS}
  i\eps\d_t u^\eps +\frac{\eps^2}{2}\Delta u^\eps = |u^\eps|^2
  u^\eps,\quad (t,x)\in \R_+\times \R^d.
\end{equation}
The goal is to compute the solution $u^\eps$ in such a way that for
$\eps=1$, we solve the nonlinear Schr\"odinger equation, and in the
semiclassical limit $\eps\to 0$, we retrieve the limit in terms of
compressible Euler equation, as recalled below. This equation appears
in several contexts in Physics. For instance, in the case $\eps=1$,
\eqref{eq:NLS} corresponds to an envelope equation in the propagation
of lasers, a case where $t$ does not correspond to time, but to the
direction of propagation; see e.g. \cite{Sulem} and references
therein. The semiclassical regime is present in the modeling of
Bose--Einstein condensation, where $\eps$ corresponds to the
(rescaled) Planck constant; see e.g. \cite{PS03} and references
therein. A remarkable property in the semiclassical regime is that the
limit is expressed in terms of a compressible, isentropic Euler
equation.
\smallbreak

A popular way to relate the semiclassical limit to fluid dynamics is
the use of the Madelung transform \cite{Madelung}, which is
essentially the polar decomposition: seek the solution to
\eqref{eq:NLS} of the form
\begin{equation*}
  u^\eps(t,x) = \sqrt{\rho(t,x)}e^{iS(t,x)/\eps},\quad
  \rho\ge 0,\quad S\in \R.
\end{equation*}
Plugging this expression into \eqref{eq:NLS}, and separating real and
imaginary parts yields
\begin{equation}
  \label{eq:separ}
  \left\{
    \begin{aligned}
      &\sqrt \rho \( \d_t S +\frac{1}{2}\lvert \nabla S\rvert^2 +
      \rho\)= \frac{\eps^2}{2}\Delta\(\sqrt \rho\),\\
& \d_t \sqrt \rho + \nabla S\cdot \nabla \sqrt \rho
      +\frac{1}{2}\sqrt \rho \Delta S =0.
    \end{aligned}
\right.
\end{equation}
Two comments are in order at this stage: the first equation shows that
$S$ depends on $\eps$ and the second equation shows that so does
$\rho$ in general. We shall underscore this fact by using the notation
$(S^\eps,\rho^\eps)$. Second, the equation for $S^\eps$ can be
simplified, \emph{provided that $\rho^\eps$ has no zero}. Introducing
the velocity $v^\eps = \nabla S^\eps$, \eqref{eq:separ} yields the
system of \emph{quantum hydrodynamics} (QHD), see also \cite{Ga94}:
\begin{equation}
  \label{eq:qhd}
  \left\{
    \begin{aligned}
      &\d_t v^\eps +v^\eps\cdot \nabla v^\eps +
      \nabla \rho^\eps= \frac{\eps^2}{2}\nabla\( \frac{\Delta\(\sqrt
      \rho^\eps\)}{\sqrt \rho^\eps}\),\\
& \d_t \rho^\eps  + \DIV \(\rho^\eps v^\eps\) =0.
    \end{aligned}
\right.
\end{equation}
The term on the right hand side of the equation for $v^\eps$ is
classically referred to as \emph{quantum pressure}.
In the limit $\eps\to 0$, this term disappears, and we find the
compressible Euler equation:
\begin{equation}
  \label{eq:euler}
  \left\{
    \begin{aligned}
      &\d_t v +v\cdot \nabla v +
      \nabla \rho= 0,\\
& \d_t \rho  + \DIV \(\rho v\) =0.
    \end{aligned}
\right.
\end{equation}
This approach was used recently to develop an asymptotic preserving
scheme for the linear Schr\"odinger equation ($|u^\eps|^2u^\eps$ is
replaced with $V(x)u^\eps$), see \cite{DGM07}. The goal of an
asymptotic preserving scheme is to have a unified way to compute the
solution as $\eps=1$, and to retrieve the limit as $\eps\to 0$, in
such a way that the discretization does not depend on $\eps$; see
e.g. \cite{Ji99,DJT08}. As pointed out in \cite{DGM07}, the drawback of
Madelung transform is that it does not support the 
presence of vacuum ($\rho=0$). The point of view that we shall study
numerically is due to E.~Grenier \cite{Grenier98}, and consists in
seeking $u^\eps$ as
\begin{equation}\label{eq:phaseampl}
  u^\eps(t,x) = a^\eps(t,x)e^{i\phi^\eps(t,x)/\eps},\quad
  a^\eps\in \C,\quad \phi^\eps\in \R.
\end{equation}
Allowing the amplitude $a^\eps$ to be complex-valued introduces an
extra degree of freedom, compared to the Madelung transform. The
choice of Grenier consists in imposing
\begin{equation}
  \label{eq:grenier}
  \left\{
    \begin{aligned}
      &\d_t \phi^\eps +\frac{1}{2}|\nabla \phi^\eps|^2 +|a^\eps|^2
      =0,\\
&\d_t a^\eps +\nabla \phi^\eps\cdot \nabla a^\eps +
\frac{1}{2}a^\eps\Delta \phi^\eps = i\frac{\eps}{2}\Delta a^\eps.
    \end{aligned}
\right.
\end{equation}
In terms of $v^\eps=\nabla \phi^\eps$, this becomes
\begin{equation}
  \label{eq:grenierv}
  \left\{
    \begin{aligned}
      &\d_t v^\eps +v^\eps\cdot \nabla v^\eps +\nabla |a^\eps|^2
      =0,\\
&\d_t a^\eps +v^\eps\cdot \nabla a^\eps +
\frac{1}{2}a^\eps\DIV v^\eps = i\frac{\eps}{2}\Delta a^\eps.
    \end{aligned}
\right.
\end{equation}
In this model, \emph{the
presence of vacuum ($a^\eps=0$) is not a problem}. We will see that
this is so both on a theoretical level and in computational tests. In the limit
$\eps\to 0$, we find formally
\begin{equation}
  \label{eq:eulersym}
  \left\{
    \begin{aligned}
      &\d_t v +v\cdot \nabla v +\nabla |a|^2
      =0,\\
&\d_t a +v \cdot \nabla a +
\frac{1}{2}a\DIV v =0.
    \end{aligned}
\right.
\end{equation}
We check that $(\rho,v)=(|a|^2,v)$ then solves \eqref{eq:euler}:
\eqref{eq:eulersym} corresponds to the nonlinear symmetrization of
\eqref{eq:euler} (\cite{MUK86,JYC90}).
\smallbreak

In this paper, we have chosen to focus on the defocusing cubic nonlinearity, for
which the relevance of \eqref{eq:grenier} to study the semiclassical
limit is proved (see \S\ref{sec:known}). It seems very likely that
equivalent numerical results should be available for other
nonlinearities, as discussed in  \S\ref{sec:other}, even though in
several cases, no theoretical result is available concerning the
natural generalization \eqref{eq:greniervgen} of
\eqref{eq:grenierv}. Similarly, in the
linear setting considered in \cite{DGM07}, this modified Madelung
transformation should overcome the problem of vacuum pointed out in
\cite{DGM07}.
\smallbreak

We also stress the fact that the convergence of \eqref{eq:grenierv}
towards \eqref{eq:eulersym} holds so long as no singularity has
appeared in the solution of \eqref{eq:eulersym} (or, equivalently, in
\eqref{eq:euler}). Note that except in the very specific case $d=1$
(where the cubic Schr\"odinger equation is completely integrable), no
analytical result seems to be available concerning the asymptotic
behavior of $u^\eps$ as $\eps\to 0$ for large time (that is, after a
singularity has formed in the solution to the Euler equation). As
pointed out in \cite{CaBook}, the notion of caustic seems to be
different in the case of \eqref{eq:NLS}, compared to the linear case
\begin{equation*}
  i\eps\d_t \psi^\eps +\frac{\eps^2}{2}\Delta \psi^\eps =
  V(x)\psi^\eps, 
\end{equation*}
where several computational results are available past caustics (see
e.g. \cite{Go02,Go05} and references therein, and \S\ref{sec:open}). 

\subsection{Conserved quantities}
\label{sec:conserve}

Equation~\eqref{eq:NLS} enjoys a bi-Hamiltonian structure, and
therefore has two quantities which are independent of time:
\begin{align}
 \label{eq:mass}
\text{Mass: }& \frac{d}{dt}\|u^\eps(t)\|_{L^2(\R^d)}^2=0.\\
\label{eq:energy}
\text{Energy: }& \frac{d}{dt}\(\|\eps\nabla u^\eps(t)\|_{L^2(\R^d)}^2
+ \|u^\eps(t)\|_{L^4(\R^d)}^4\)=0.
\end{align}
A third important quantity is conserved, which plays a crucial role,
e.g. in the study of finite time blow-up in the case of focusing
nonlinearities:
\begin{equation}\label{eq:momentum}
  \text{Momentum: }\frac{d}{dt}\IM\int_{\R^d}\overline u^\eps(t,x)\eps
  \nabla u^\eps(t,x)dx =0.
\end{equation}
Plugging the phase/amplitude representation \eqref{eq:phaseampl} into
these conservation laws, and passing formally to the limit $\eps\to
0$, we recover conservation laws associated to the Euler equation
\eqref{eq:euler} (\cite{CaBook}):
\begin{align*}
  \frac{d}{dt}\int_{\R^d}\rho(t,x)dx =\frac{d}{dt}\int_{\R^d} \(\rho |v|^2 +
  \rho^2\)(t,x)dx= \frac{d}{dt}\int_{\R^d} \(\rho v\)(t,x)dx=0.
\end{align*}
Setting $J^\eps(t)=x+i\eps t\nabla$, two other evolution laws are
available:
\begin{align*}
  \text{Pseudo-conformal: }&  \frac{d}{dt}\left(\|J^\varepsilon(t)
u^\varepsilon\|_{L^2(\R^d)}^2
+ t^2\|u^\varepsilon\|_{L^{4}(\R^d)}^{4}
\right)=t (2-d)\|u^\varepsilon\|_{L^{4}(\R^d)}^{4}.\\
& \frac{d}{dt}\operatorname{Re}\int_{\R^d} \overline u^\eps(t,x)
 J^\varepsilon(t)u^\varepsilon(t,x) dx=0 .
\end{align*}
Passing formally to the limit $\eps\to 0$, we infer:
\begin{align*}
  & \frac{d}{dt}\int_{\R^d} \left( \left| x -t v
        (t,x)\right|^2 \rho(t,x) +t^2 \rho^2(t,x)
\right)dx
=(2-d)t \int_{\R^d} \rho^2(t,x)dx.\\
& \frac{d}{dt}\int_{\R^d} \left(x -t v(t,x)\right)\rho(t,x) dx =0.
\end{align*}
We discuss this aspect further into details in \S\ref{sec:conserve2}.
\subsection{Semiclassical limit for NLS: numerical approach}
\label{sec:seminum}
The most reliable approach so far to study numerically the
semiclassical limit for Schr\"odinger equations seems to be the
time-splitting spectral discretization (Lie or Strang splitting, see
\cite{BBD02}): one solves alternatively two linear equations,
\begin{equation*}
  i\eps\d_t v^\eps+\frac{\eps^2}{2}\Delta v^\eps =0 ,\quad
\text{and} \quad i\eps\d_t v^\eps = |v^\eps|^2 v^\eps.
\end{equation*}
Despite the appearance, the second equation is linear, since in view
of the gauge invariance, $\d_t \(|v^\eps|^2\)=0$, so the second equation
boils down to $i\d_t v^\eps = |v_I^\eps|^2 v^\eps$, where $v_I^\eps$
denotes the initial value for $v^\eps$.
\smallbreak

Note that from \cite{MPP99}, usual
finite-difference schemes for the linear Schr\"odinger equation may
lead to very
wrong approximations. Instead, schemes based on the fast Fourier
transform (FFT) have been preferred. In \cite{BJM02},  it was shown
that the time-splitting method, coupled with a
trigonometric spectral approximation of the spatial
derivative,   conserves the total mass, and  is gauge-invariant,
time-reversible. Moreover, with this approach, the convergence of the
scheme in $L^2$ is proved, when the nonlinearity in \eqref{eq:NLS} is
replaced by an external potential. This regime turns out to be far
less singular in the limit $\eps\to 0$ than the nonlinear case of
\eqref{eq:NLS}, as discussed below.
\smallbreak
We briefly point out that the numerical
study in \cite{BJM02,BJM03} shows that, contrary to the case of the
linear Schr\"odinger equation,  to study the semiclassical
limit for \eqref{eq:NLS} with time-splitting, it is necessary to
consider mesh sizes and time steps which are $\O(\eps)$. This is due
to the fact that the semiclassical regime is strongly nonlinear
(supercritical, in the terminology of \cite{CaBook}): we consider
initial data which are $\O(1)$ in $L^2\cap L^\infty$, and there is no
power of $\eps$ in front of the nonlinearity. As a consequence, the
semiclassical limit is a ``strongly nonlinear'' process, since
starting with a semilinear Schr\"odinger equation (for fixed
$\eps>0$), we come up in the limit $\eps\to 0$ with a quasilinear
equation (the compressible Euler equation). 

\smallbreak
In \cite{BJM03}, it is shown that mesh sizes and time steps must be
taken of order $\O(\eps)$, even to
recover the behavior of two
physically important quantities:
\begin{align*}
  \text{Position density: }& \rho^\eps(t,x) =
  |u^\eps(t,x)|^2=|a^\eps(t,x)|^2.\\
\text{Current density: }&J^\eps(t,x) = \eps\IM \(\overline
u^\eps(t,x)\nabla u^\eps(t,x)\).
\end{align*}
We refer to the numerical results in
\cite[Example~4.3]{BJM03}, which show some important instability in the
numerical approximation for \eqref{eq:NLS}, at least if the time step
is large compared to $\eps$: evidently, the position and current
densities cannot be computed correctly if mesh size and time step are
independent of $\eps$.
\smallbreak

On the contrary, we obtain a good description of $\rho^\eps$ and
$J^\eps$ as $\eps\to 0$ when studying numerically the system
\eqref{eq:grenierv}, even if the time step is independent of
$\eps$. Things would probably be similar in the case of the QHD system
\eqref{eq:qhd}, up to the important aspect that the presence of vacuum
($\rho^\eps=0$) is not allowed in \eqref{eq:qhd}. The idea to explain
this difference is the following. To construct directly the wave
function $u^\eps$ solving \eqref{eq:NLS}, errors which are large
compared to $\eps$ (say of order $\eps^{\alpha}$, $0<\alpha<1$) lead
to instability of order $\O(1)$ on $u^\eps$ after a short time (of
order $\eps^{1-\alpha}$). Among possible sources of errors, we can
mention a simple space shift, which is rather likely to occur in
numerical studies. This can actually be proved thanks to the
approach of Grenier, see \cite{CaARMA}. This is due to the strong
coupling phase/amplitude in \eqref{eq:grenier}: a small modification
of the amplitude $a^\eps$ leads to a modification of the same order
for $\phi^\eps$. To recover $u^\eps$, one has to divide $\phi^\eps$ by
$\eps$, which is small, so the actual error for $\phi^\eps$ may be
dramatically increased.
\smallbreak

One can rephrase the above analysis as follows. The semiclassical
limit for \eqref{eq:NLS} is
``strongly nonlinear'': as $\eps\to 0$, we pass from a semilinear
equation (for fixed $\eps$, the Cauchy problem for \eqref{eq:NLS} is
handled by perturbative methods relying on properties of the linear
equation, see e.g. \cite{CazCourant}), to a quasilinear one, the Euler
equation \eqref{eq:euler}
(in which the nonlinear terms cannot be treated by perturbative
methods, see e.g. \cite{Taylor3}). As a consequence, the asymptotic
behavior of $u^\eps$ is very sensitive to small errors \cite{CaARMA}.
In time splitting methods, one considers the nonlinearity as a
perturbation, while this is not sensible in the framework of
\eqref{eq:NLS}, unless a high precision in the space and time steps is
demanded. It would be quite different with some positive
power --- at least $1$ --- of $\eps$ in front of the nonlinearity; see
\cite{CaBook} for theoretical explanations, and \cite{BJM02,BJM03} for
numerical illustrations.
\smallbreak

If one is interested only in the position and current densities, small
errors in \eqref{eq:grenierv} are not so important, since one never
has to divide the phase by $\eps$ (see Section~\ref{sec:background} for more
details). This explains why we can obtain satisfactory results by
considering a mesh size $h=\Delta x$ independent of $\eps$, and a time
step given by the parabolic scaling, that is, proportional to $h^2$.

\smallbreak

An extra step in the numerical analysis of nonlinear Schr\"odinger
equations was achieved in \cite{Be04}, where a semi-discrete scheme
was introduced, which turns NLS into an almost linear system, in the
case $\eps=1$.
It is based on a central-difference approximation shifted by a half
time-step. For $t_n= n \delta t$ and $t_{n+1/2} = (n + \frac 1 2) \delta
t$, let $u^n$ be the approximation at $t=t_n$. The scheme is
given by
\begin{equation*}
\left \{
  \begin{aligned}
i \frac {u^{n+1}-u^n}{\delta t}  +\frac{1}{2}\Delta
\left( \frac {u^{n+1}+u^n}{2} \right)
& = \psi^{n+1/2} \( \frac {u^{n+1}+u^n}{2} \), \\
 \frac {\psi^{n+1/2}+\psi^{n-1/2}}{2} & =  |u^n|^{2}.
  \end{aligned}
\right .
\end{equation*}
This approach has the advantage of preserving the mass \eqref{eq:mass}
and an analogue of the energy \eqref{eq:energy} of the solution
\cite{Be04}:
\begin{align*}
  \int_{\R^d}|u^n|^2  & = \int_{\R^d}|u^0|^2 ,\quad \text{and}\quad E^n=
  E^0,\text{ where}\\
E^n&=\int_{\R^d}\(|\nabla u^n|^2 +2|u^n|^2
\psi^{n-1/2}-\(\psi^{n-1/2}\)^2\).
\end{align*}
It does not seem that there is also an analogue of the momentum which
is conserved, in the same fashion as \eqref{eq:momentum}. 
Note that to adapt this approach numerically in the semiclassical
regime, one would also have to consider mesh sizes and time steps which
are $\O(\eps)$. Therefore, the approaches in \cite{BJM02,BJM03,Be04} do
not seem well suited for asymptotic preserving schemes.
\smallbreak

In this paper, we present numerical experiments only, and do not claim
to justify the approach by numerical analysis arguments. In view of
the little knowledge that we have on the behavior of the solution to
\eqref{eq:grenierv} past the critical time for the Euler equation,
such a study could reasonably be expected only so long as the solution of
the Euler equation remains smooth. Yet, such a study would be an
interesting challenge, which we do not address here.

\subsection{Outline of the paper}

In Section~\ref{sec:background}, we recall the main theoretical results
established for the semiclassical analysis of \eqref{eq:NLS}. The
main goal is to state some results which can thereafter be tested
numerically to validate the scheme. The numerical implementation is
presented in Section~\ref{sec:scheme}. Numerical experiments (based on
three examples) are discussed in Section~\ref{sec:exp}. We conclude
the paper in Section~\ref{sec:concl}.

\section{The theoretical point of view}
\label{sec:background}

We will always consider initial data of the form
\begin{equation}\label{eq:ci}
  u^\eps(0,x) = a_0(x)e^{i\phi_0(x)/\eps},\quad a_0\in \C,\quad
  \phi_0\in \R,
\end{equation}
where $a_0$ and $\phi_0$ are smooth, say in $H^s(\R^d)$ for all
$s$. In that case, \eqref{eq:grenier} is supplemented with the Cauchy
data
\begin{equation*}
  a^\eps(0,x)=a_0(x)\quad ;\quad \phi^\eps(0,x)=\phi_0(x).
\end{equation*}
This implies that the Cauchy data for \eqref{eq:grenierv} are
\begin{equation}\label{eq:CIGrenier}
  a^\eps(0,x)=a_0(x)\quad ;\quad v^\eps(0,x)=\nabla\phi_0(x).
\end{equation}

\subsection{Known results}
\label{sec:known}
A second advantage of the system \eqref{eq:grenierv} over
\eqref{eq:qhd}, besides the role of vacuum, is that it already has the
form of an hyperbolic symmetric system. Separate real and
imaginary parts of $a^\eps$, $a^\eps =
a_1^\eps + ia_2^\eps$, \eqref{eq:grenierv} takes the form
\begin{equation*}
  \partial_t \bu^\eps +\sum_{j=1}^n
  A_j(\bu^\eps)\partial_j \bu^\eps
  = \frac{\eps}{2} L
  \bu^\eps\, ,
\end{equation*}
\begin{equation*}
  \text{with}\quad \bu^\eps = \left(
    \begin{array}[l]{c}
       a_1^\eps \\
       a_2^\eps \\
       v^\eps_1 \\
      \vdots \\
       v^\eps_d
    \end{array}
\right) , \quad L = \left(
  \begin{array}[l]{ccccc}
   0  &-\Delta &0& \dots & 0   \\
   \Delta  & 0 &0& \dots & 0  \\
   0& 0 &&0_{d\times d}& \\
   \end{array}
\right),
\end{equation*}
\begin{equation*}
%\begin{aligned}
  \text{and}\quad A(\bu,\xi)=\sum_{j=1}^d A_j(\bu)\xi_j
= \left(
    \begin{array}[l]{ccc}
      v\cdot \xi & 0& \frac{a_1 }{2}\,^{t}\xi \\
     0 &  v\cdot \xi & \frac{a_2}{2}\,^{t}\xi \\
     2  a_1 \, \xi
     &2  a_2\, \xi &  v\cdot \xi I_d
    \end{array}
\right).
%\end{aligned}
\end{equation*}
The matrix $A$ is symmetrized by a constant diagonal matrix $S$ such
that $SL=L$. We note that $L$ is skew-symmetric, so it plays no role
in energy estimates in Sobolev spaces $H^s(\R^d)$. The main results in
\cite{Grenier98} can be summarized as follows:
\begin{theorem}[From \cite{Grenier98}]\label{theo:grenier}
  Let $d\ge 1$, $s>4+d/2$, and $a_0,\nabla\phi_0\in H^s(\R^d)$. \\
$1.$ There exist $T>0$ and a unique solution $(\rho,v)\in
C([0,T];H^s(\R^d))^2$ to \eqref{eq:euler} such that
$\rho(0,x)=|a_0(x)|^2$ and $v(0,x)=\nabla \phi_0(x)$. \\
$2.$ For the same $T$, \eqref{eq:grenierv} has a unique
solution $(a^\eps,v^\eps)\in C([0,T];H^{s-2}(\R^d))^2$ such that
$a^\eps(0,x)=a_0(x)$ and
$v^\eps(0,x)=\nabla \phi_0(x)$. \\
$2'.$ For the same $T$, \eqref{eq:eulersym} has a unique
solution $(a,v)\in C([0,T];H^{s}(\R^d))^2$ such that
$a(0,x)=a_0(x)$ and
$v(0,x)=\nabla \phi_0(x)$. \\
$3.$ As $\eps\to 0$, we have:
\begin{equation*}
  \|a^\eps-a\|_{L^\infty([0,T];H^{s-2})} +
  \|v^\eps-v\|_{L^\infty([0,T];H^{s-2})} =\O\(\eps\).
\end{equation*}
\begin{equation*}
\end{equation*}
\end{theorem}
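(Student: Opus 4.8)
The plan is to treat \eqref{eq:grenierv} as a semilinear perturbation of the symmetric hyperbolic system \eqref{eq:eulersym}, exploiting the fact that the skew-symmetric operator $\tfrac{\eps}{2}L$ contributes nothing to $H^s$ energy estimates. First I would record the symmetrizer: since $S$ is a \emph{constant} positive-definite diagonal matrix with $SA_j(\bu)$ symmetric for every $\bu$ and every $j$, and since $SL=L$ with $L$ skew-symmetric, the quadratic form $\bu\mapsto\langle S\bu,\bu\rangle$ is equivalent to the Euclidean one and is the right object on which to run energy estimates. For part $2'$, I would apply the classical local well-posedness theory for quasilinear symmetric hyperbolic systems (e.g.\ Majda/Kato): with $a_0,\nabla\phi_0\in H^s$ and $s>4+d/2>d/2+1$, one gets a time $T_{\mathrm{sym}}>0$ and a unique solution $(a,v)\in C([0,T_{\mathrm{sym}}];H^s)$. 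The equivalence of \eqref{eq:eulersym} with \eqref{eq:euler} under $(\rho,v)=(|a|^2,v)$ (cited from \cite{MUK86,JYC90}) then transfers this to part $1$, and conversely shows we may take the common existence time $T$ in parts $1$, $2'$ to be the lifespan of the Euler solution; this is where the hypothesis $\rho(0)=|a_0|^2$, $v(0)=\nabla\phi_0$ is used, and where vacuum causes no trouble since nothing is divided by $a$.

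Next, for part $2$, I would prove that the \emph{same} $T$ works for the $\eps$-dependent system uniformly in $\eps\in(0,1]$. Writing the energy estimate for $\|\bu^\eps(t)\|_{H^s}^2$ using the symmetrizer $S$: differentiate $\sum_{|\alpha|\le s}\langle S\,\partial^\alpha\bu^\eps,\partial^\alpha\bu^\eps\rangle$ in time, substitute the equation, and observe that the contribution of $\tfrac{\eps}{2}L$ is $\eps\,\mathrm{Re}\,\langle S L\,\partial^\alpha\bu^\eps,\partial^\alpha\bu^\eps\rangle = \eps\,\mathrm{Re}\,\langle L\,\partial^\alpha\bu^\eps,\partial^\alpha\bu^\eps\rangle=0$ because $L$ is skew-symmetric (here one must be slightly careful: $L$ is second order, but it is skew-adjoint on $L^2$, so after integration by parts the term still vanishes, or is purely imaginary and drops when one takes real parts). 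The remaining commutator terms $[\,\partial^\alpha, A_j(\bu^\eps)\,]\partial_j\bu^\eps$ are estimated by Moser/Kato–Ponce tame inequalities, giving $\tfrac{d}{dt}\|\bu^\eps\|_{H^s}^2\le C(\|\bu^\eps\|_{H^s})\|\bu^\eps\|_{H^s}^2$ with $C$ \emph{independent of $\eps$}. A continuity/bootstrap argument then yields a uniform bound $\sup_{\eps\in(0,1]}\|\bu^\eps\|_{L^\infty([0,T];H^s)}\le M$ on the Euler lifespan $T$; in particular the $H^{s-2}$ solution claimed in the statement exists on $[0,T]$ (the loss of two derivatives is an artifact of how one closes the estimate together with the convergence step — on the energy level one actually keeps $H^s$).

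For part $3$, the convergence rate, set $w^\eps=\bu^\eps-\bu$ where $\bu=(a_1,a_2,v)$ solves \eqref{eq:eulersym}. Subtracting the two systems, $w^\eps$ satisfies
\begin{equation*}
  \partial_t w^\eps + \sum_j A_j(\bu^\eps)\partial_j w^\eps
  = \sum_j\bigl(A_j(\bu)-A_j(\bu^\eps)\bigr)\partial_j\bu
  + \frac{\eps}{2}L\bu^\eps .
\end{equation*}
Run the $H^{s-2}$ energy estimate on $w^\eps$ with the symmetrizer $S$: the quasilinear term is handled as before (using $s-2>d/2+1$, which needs $s>4+d/2$, precisely the hypothesis), the difference $A_j(\bu)-A_j(\bu^\eps)$ is linear in $\bu-\bu^\eps=w^\eps$ so it produces a term $\lesssim \|w^\eps\|_{H^{s-2}}^2$ times the $H^{s-1}$ norm of $\bu$ (finite by part $2'$), and the source $\tfrac{\eps}{2}L\bu^\eps$ is $\O(\eps)$ in $H^{s-2}$ because $L$ loses two derivatives and $\bu^\eps$ is bounded in $H^s$ uniformly by part $2$. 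Together with $w^\eps(0)=0$, Grönwall gives $\|w^\eps(t)\|_{H^{s-2}}\le C\eps$ on $[0,T]$, which is the claim. \textbf{Main obstacle.} The one genuinely delicate point is establishing the $\eps$-uniform a priori bound in part $2$: one must verify carefully that differentiating the energy and inserting the second-order skew term $\tfrac{\eps}{2}L$ really produces no uncontrolled contribution — the relation $SL=L$ and the skew-adjointness of $L$ on $L^2$ are exactly what make this work, and this is the structural reason Grenier's choice of equations is better than the Madelung/QHD formulation. Everything else is routine symmetric-hyperbolic energy machinery.
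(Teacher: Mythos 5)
Your proposal is correct and follows exactly the route the paper itself indicates: the theorem is imported from \cite{Grenier98}, and the paper's own justification is precisely the observation that \eqref{eq:grenierv} is a quasilinear symmetric hyperbolic system with a constant diagonal symmetrizer $S$ satisfying $SL=L$, the skew-adjointness of $L$ killing its contribution to $H^s$ energy estimates uniformly in $\eps$, with the $\O(\eps)$ rate coming from treating $\frac{\eps}{2}L\bu^\eps$ as a source that loses two derivatives. Your fleshing-out of the uniform bounds, the commutator/Moser estimates, and the Gr\"onwall argument for $w^\eps=\bu^\eps-\bu$ matches Grenier's proof as summarized in Section~\ref{sec:known}.
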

\begin{remark}[Periodic case]
  The same result holds in the periodic setting ($x\in \T^d$
  instead of $x\in \R^d$), with exactly the same proof.
\end{remark}
Once $v^\eps$ is constructed, there are at least two ways to get back
to $\phi^\eps$. Either argue that $v^\eps$ remains irrotational, or
simply define $\phi^\eps$ as
\begin{equation}\label{eq:backtophi}
  \phi^\eps(t)= \phi_0 -\int_0^t \(\frac{1}{2}|v^\eps(s)|^2 +\nabla
  |a^\eps(s)|^2\)ds,
\end{equation}
and check that $\d_t\( v^\eps-\nabla \phi^\eps\)= \d_t v^\eps -\nabla
\d_t \phi^\eps=0$.
So for $t\in [0,T]$, that is so long as the solution to the Euler
equation \eqref{eq:euler} remains smooth, the solution to
\eqref{eq:NLS} with initial data $u^\eps_{\mid t=0}= a_0
e^{i\phi_0/\eps}$ is given by $u^\eps =a^\eps e^{i\phi^\eps/\eps}$.
\smallbreak

Note that even if $\phi_0=0$, $\phi^\eps$ (as well as $v^\eps$ and
$v$) must not be
expected to be zero (nor even small), because of the strong coupling
in \eqref{eq:grenier}. Typically, if $\phi_0=0$, \eqref{eq:grenier}
yields $\d_t \phi^\eps_{\mid t=0} = -|a_0|^2\not =0$.

\smallbreak
In addition, in the limit $\eps\to 0$, we recover the main two
quadratic observables:
\begin{align*}
  \rho^\eps&=|a^\eps|^2\Tend \eps 0 |a|^2=\rho\text{ in
  }L^\infty([0,T];L^1(\R^d)),\\
 J^\eps &= \IM\(\eps \overline
  u^\eps \nabla u^\eps \) = |a^\eps|^2 v^\eps +\eps \IM \(\overline
  a^\eps \nabla a^\eps\)\Tend \eps 0 |a|^2 v=J\text{ in
  }L^\infty([0,T];L^1(\R^d)).
\end{align*}
We have more precisely:
\begin{equation}\label{eq:obsquad}
  \|\rho^\eps -\rho\|_{L^\infty([0,T];L^1\cap L^\infty)} +
  \|J^\eps-J\|_{L^\infty([0,T];L^1\cap L^\infty)}=\O\(\eps\).
\end{equation}
We can also prove the convergence of the wave function
(\cite{Grenier98}). In the particular case which we consider
where the initial amplitude $a^\eps(0,x)$ does not depend on $\eps$,
we have (with an obvious definition for $\phi$):
\begin{equation*}
  \|u^\eps - a e^{i\phi/\eps}\|_{L^\infty([0,T];L^2\cap
    L^\infty)}=\O(\eps).
\end{equation*}
In general, a modulation of $a$ must be taken into account to have
such an approximation of the wave function (\cite{CaBook}): $u^\eps
\approx ae^{i\phi^{(1)}}  e^{i\phi/\eps}$. In the framework of this
paper, we have $\phi^{(1)}=0$ (see \cite[Section~4.2]{CaBook}).

\subsection{An open question}
\label{sec:open}

As pointed out in the introduction, no analytical result seems to be
available concerning the semiclassical limit of \eqref{eq:NLS} when
the solution of the Euler equation \eqref{eq:euler} has become
singular.  
Theorem~\ref{theo:grenier} gives a rather complete picture for the
asymptotic behavior of $u^\eps$ for $t\in [0,T]$, that it before the
solution to \eqref{eq:euler} becomes singular. Note that if for
instance $a_0$ and $\phi_0$ are compactly supported, then no matter
how small they are, $(a,v)$ develops a singularity in finite time
(\cite{MUK86,JYC90,Xin98}). On the other hand, for fixed $\eps>0$, we
know that the solution to \eqref{eq:NLS} with initial data
$u^\eps_{\mid t=0}= a_0
e^{i\phi_0/\eps}\in H^s(\R^d)$, $s\ge 1$, is global in time with the
same regularity, at least if $d\le 4$: $u^\eps\in
C([0,\infty[;H^s(\R^d))$. See \cite{GV79Cauchy} (or \cite{CazCourant})
for the case $d\le 3$, and
\cite{RV07} for the case $d=4$ (which is energy-critical).
\smallbreak

A natural question is then: what happens to $u^\eps$ as the solution
to the Euler equation \eqref{eq:euler} becomes singular? In the linear
setting,
\begin{equation}\label{eq:schrodlin}
  i\eps\d_t u^\eps_{\rm lin} +\frac{\eps^2}{2}\Delta u^\eps_{\rm lin}
  = 0\quad ;\quad
  u^\eps_{{\rm lin}\mid t=0}=a_0 e^{i\phi_0/\eps},
\end{equation}
the question is rather well understood: when the solution to the
corresponding Burger's equation (for the phase) becomes singular, a
caustic is formed, which is a set in $(t,x)$-space (see
e.g. \cite{D74,MF81}). Near the caustic, the amplitude of $u^\eps_{\rm
  lin}$ is amplified, like a negative power of $\eps$. For instance,
if $\phi_0(x) = -|x|^2/2$, then
\begin{equation*}
  u^\eps_{\rm lin}(t,x)\Eq \eps 0 \left\{
    \begin{aligned}
      &\frac{1}{(1-t)^{d/2}}a_0\(\frac{x}{1-t}\)
      e^{i|x|^2/(2\eps(t-1))}\text{ if }t<1,\\
&\frac{1}{\eps^{d/2}}\widehat a_0\(\frac{x}{\eps}\)\text{ if }t=1,
    \end{aligned}
\right.
\end{equation*}
where $\widehat a_0$ denotes the Fourier transform of $a_0$; see
\cite{CaBook} for several developments around this example, and
\cite{CGM3AS} for corresponding numerical experiments. Such a
concentration is ruled out in the case of
\eqref{eq:NLS}, since the conservation of the energy \eqref{eq:energy}
yields the uniform bound
\begin{equation*}
  \|u^\eps(t)\|_{L^4(\R^d)} \le C \text{ independent of }\eps\in
  ]0,1]\text{ and } t\in \R.
\end{equation*}
We remark that multiplying each equation in \eqref{eq:grenierv},
derivatives become exactly $\eps$-derivatives: every time a term is
differentiated, it is multiplied by $\eps$. This is consistent with
the possibility that $a^\eps$ and $v^\eps$ become oscillatory past the
critical time for the Euler equation (with wavelength of order $\eps$
or more). The numerical experiments we
present below suggest that this is indeed the case. We insist on the
fact that no result is
available, though, on global existence aspects for
\eqref{eq:grenierv}: the solution may be globally smooth (and
$\eps$-oscillatory, in the sense of \cite{GMMP}), but it may blow up
in finite time. 
\smallbreak

Note however that
the approach we present here is no longer expected to be asymptotic
preserving beyond the breakup time for the Euler equation. The
presence of rapid oscillations is a possible explanation, and we then
recover the problem pointed out in \cite{BJM02} for pre-breakup times:
 rapid oscillations can be resolved only if time step and mesh
sizes are comparable to the (small) wavelength of the wave. Finally,
we point out that even in the linear case (see e.g. the above
example), one cannot expect an asymptotic  preserving approach to
solve  \eqref{eq:schrodlin} after a caustic has formed: near the
caustic, small spatial scales must be taken into account. In the above
example, the wave function is concentrated at scale $\eps$. A
possibility to get an aymptotically preserving 
approach in the linear case would be the use of Lagrangian integrals
\cite{D74}; see \cite{CaBook} for an extension in a very specific
nonlinear setting. Note however that the definition of the Lagrangian
integral depends on the initial phase, so this approach is more
delicate to implement numerically. The $K$-branch approach would lead
to similar requirements; see \cite{BC98,Go02,Go05}. 
\subsection{Conserved quantities}
\label{sec:conserve2}
 In the one-dimensional case $d=1$, the cubic nonlinear Schr\"odinger
 equation \eqref{eq:NLS} has infinitely many conserved quantities
 \cite{ZS73} (it is completely integrable, see \cite{ZM75}). We shall
 not emphasize this 
 particular case in this paper, and rather consider the case of a
 cubic nonlinearity in arbitrary dimension. Numerical experiments are
 presented in the two-dimensional case $d=2$.
\smallbreak

In this general case, we retain the three standard conservations:
mass \eqref{eq:mass}, momentum \eqref{eq:momentum}, and energy
\eqref{eq:energy}. Writing the solution to \eqref{eq:NLS} as $u^\eps =
a^\eps e^{i\phi^\eps/\eps}$, we infer three corresponding conversation
laws for the solution to \eqref{eq:grenierv}:

\begin{proposition}\label{prop:conservations}
  Let  $d\ge 1$ and $(a^\eps,v^\eps)\in
  C\([0,T];H^1\cap L^\infty(\R^d)\)^2$ solve \eqref{eq:grenierv}.  The following
  three quantities do not depend on time:\\
$(1)$ The $L^2$-norm of $a^\eps$:
$\displaystyle  \frac{d}{dt}\|a^\eps(t)\|_{L^2(\R^d)}^2 =0.$\\
$(2)$ The momentum: 
  $\displaystyle  \frac{d}{dt}\int_{\R^d} \(|a^\eps(t,x)|^2 v^\eps(t,x) +\eps \IM
    \(\overline a^\eps(t,x) \nabla a^\eps(t,x)\)\)dx=0.$\\
$(3)$ The energy: if $v^\eps_{\mid t=0}$ is irrotational, $\nabla \wedge
v^\eps_{\mid t=0}=0$, then 
\begin{equation*}
  \frac{d}{dt}\int_{\R^d} \(|\eps \nabla a^\eps(t,x) +ia^\eps(t,x)
v^\eps(t,x)|^2  +|a^\eps(t,x)|^4\)dx=0.
\end{equation*}
\end{proposition}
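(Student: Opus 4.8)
The plan is to verify each of the three conservation laws directly from the system \eqref{eq:grenierv}, treating $a^\eps$ as complex-valued and $v^\eps$ as the (real) velocity field, using only the regularity $(a^\eps,v^\eps)\in C([0,T];H^1\cap L^\infty)^2$ that is assumed (so all spatial integrations by parts are justified and no boundary terms survive, since everything decays at infinity in the $\R^d$ setting). For part $(1)$, I would multiply the $a^\eps$-equation by $\overline{a^\eps}$, take the real part, and integrate over $\R^d$: the term $\RE\int \overline{a^\eps}\,v^\eps\cdot\nabla a^\eps = \tfrac12\int v^\eps\cdot\nabla|a^\eps|^2 = -\tfrac12\int |a^\eps|^2\DIV v^\eps$ cancels against $\tfrac12\int |a^\eps|^2\DIV v^\eps$ coming from the third term, while $\RE\left(i\tfrac\eps2\int\overline{a^\eps}\Delta a^\eps\right) = -\tfrac\eps2\IM\int |\nabla a^\eps|^2 = 0$. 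This gives $\frac{d}{dt}\|a^\eps(t)\|_{L^2}^2 = 0$.

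For part $(2)$, I would compute $\frac{d}{dt}$ of each piece of the momentum density separately. Differentiating $\int |a^\eps|^2 v^\eps$ in time, substitute $\d_t v^\eps$ from the first equation of \eqref{eq:grenierv} and $\d_t |a^\eps|^2$ from the (real part of the) second; differentiating $\eps\IM\int \overline{a^\eps}\nabla a^\eps$ in time, substitute $\d_t a^\eps$ from the second equation and integrate the Laplacian term by parts. The cleanest route is probably to recall that this momentum equals $\IM\int \overline{u^\eps}\,\eps\nabla u^\eps\,dx$ under the substitution $u^\eps = a^\eps e^{i\phi^\eps/\eps}$ with $\phi^\eps$ reconstructed via \eqref{eq:backtophi}, and that \eqref{eq:momentum} is the known conservation of momentum for \eqref{eq:NLS}; but to keep the proof self-contained at the level of \eqref{eq:grenierv} I would carry out the direct computation. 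The various cross terms organize into perfect divergences: schematically one gets $\nabla\!\cdot(\text{flux})$ with flux involving $|a^\eps|^2 v^\eps\otimes v^\eps$, $\tfrac12|a^\eps|^4 I_d$, and the quantum-type stress $\tfrac{\eps^2}{4}\big(\nabla a^\eps\otimes\nabla\overline{a^\eps} + \text{c.c.}\big) - \tfrac{\eps^2}{4}\nabla\otimes\nabla|a^\eps|^2$, all of which integrate to zero.

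For part $(3)$, the energy $E^\eps(t) = \int\big(|\eps\nabla a^\eps + ia^\eps v^\eps|^2 + |a^\eps|^4\big)dx$ equals, after expanding, $\int\big(\eps^2|\nabla a^\eps|^2 + |a^\eps|^2|v^\eps|^2 - 2\eps\, v^\eps\cdot\IM(\overline{a^\eps}\nabla a^\eps) + |a^\eps|^4\big)$, which under $u^\eps=a^\eps e^{i\phi^\eps/\eps}$ with $v^\eps=\nabla\phi^\eps$ is exactly $\|\eps\nabla u^\eps\|_{L^2}^2 + \|u^\eps\|_{L^4}^4$, the conserved energy \eqref{eq:energy} of \eqref{eq:NLS}; the hypothesis $\nabla\wedge v^\eps_{\mid t=0}=0$ guarantees $v^\eps=\nabla\phi^\eps$ for all $t$ (the vorticity is transported and stays zero), so the identification is legitimate. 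I would either invoke this reduction, or differentiate $E^\eps$ directly: plug in $\d_t a^\eps$ and $\d_t v^\eps$ from \eqref{eq:grenierv}, and watch the terms combine — the skew-symmetric operator $L$ contributes nothing (as noted in the text, $L$ is skew-symmetric so it drops out of such quadratic energy identities), and the hyperbolic transport terms reorganize into a total divergence once irrotationality is used to replace $v^\eps\cdot\nabla v^\eps$ by $\tfrac12\nabla|v^\eps|^2$.

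The main obstacle is part $(3)$: keeping track of all the $\eps$ and $\eps^2$ terms when differentiating $|\eps\nabla a^\eps + ia^\eps v^\eps|^2$ is bookkeeping-intensive, and one must use the irrotationality hypothesis at exactly the right place (to turn the $v^\eps$-transport term into a gradient so that the cross terms telescope); without it the identity genuinely fails. The reduction to \eqref{eq:energy} via the Madelung-type substitution is the conceptually safer path and I would present it as the primary argument, noting that it also transparently explains why the hypothesis $\nabla\wedge v^\eps_{\mid t=0}=0$ is needed and why parts $(1)$ and $(2)$ require no such hypothesis.
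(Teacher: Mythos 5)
Your proposal is correct, and for parts $(1)$ and $(2)$ it is essentially the paper's argument: the paper proves $(1)$ by multiplying the $a^\eps$-equation by $\overline a^\eps$, integrating, and taking the real part, and $(2)$ by the multipliers $\tfrac12|a^\eps|^2$ (for the $v^\eps$-equation) and $i\eps\nabla\overline a^\eps+\overline a^\eps v^\eps$ (for the $a^\eps$-equation), which is the same computation as your ``differentiate each piece and substitute'' organized as a multiplier identity. For part $(3)$ you diverge slightly: the paper stays at the level of \eqref{eq:grenier} --- after observing, exactly as you do, that irrotationality is propagated so that $\phi^\eps$ can be reconstructed --- and then uses the time-derivative multipliers $-\tfrac12\d_t|a^\eps|^2$ and $i\eps\d_t\overline a^\eps+\overline a^\eps\d_t\phi^\eps$, whereas your primary route passes all the way to $u^\eps=a^\eps e^{i\phi^\eps/\eps}$ and invokes the conservation of \eqref{eq:energy} for \eqref{eq:NLS} together with the pointwise identity $|\eps\nabla u^\eps|^2=|\eps\nabla a^\eps+ia^\eps v^\eps|^2$. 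That reduction is legitimate and, as you say, makes transparent why the irrotationality hypothesis appears only in $(3)$; the paper's multiplier computation is more self-contained (it does not presuppose energy conservation for \eqref{eq:NLS} at the stated regularity) and is the version that combines cleanly with the regularization procedure the authors allude to, which matters since the proposition only assumes $C([0,T];H^1\cap L^\infty)$. One small caution: the analogous reduction you float for part $(2)$ --- identifying the momentum with \eqref{eq:momentum} via \eqref{eq:backtophi} --- would also require $v^\eps=\nabla\phi^\eps$, i.e.\ irrotationality, which part $(2)$ does not assume; so your decision to carry out the direct divergence-form computation there is not merely a matter of taste but necessary for the statement as given.
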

\begin{proof}[Sketch of proof]
 This result can be proved  by using the standard regularizing
procedure and suitable multipliers. We shall just indicate the formal
procedure.

The conservation of mass is proved by multiplying the second equation
in \eqref{eq:grenierv} by $\overline a^\eps$, integrating in space,
and taking the real value. 

The conservation of the momentum is obtained as follows. Multiply the
equation for $v^\eps$ by $\frac{1}{2}|a^\eps|^2$, and integrate in
space. Multiply the equation for $a^\eps$ by $i\eps\nabla \overline
a^\eps + \overline a^\eps v^\eps$, integrate in space and consider the
real value. Summing these two relations yields the conservation of the
momentum. 

For the energy, the procedure is similar. Note that 
 \begin{equation*}
  \d_t v^\eps = -\nabla\(\frac{|v^\eps|^2}{2}+|a^\eps|^2\),\quad
  \text{hence } \d_t \(\nabla \wedge v^\eps\)=0.
\end{equation*}
Therefore, if $\nabla \wedge
v^\eps_{\mid t=0}=0$, then we can find $\phi^\eps$ such that
$(\phi^\eps,a^\eps)$ solves \eqref{eq:grenier}. Multiply the equation
in $\phi^\eps$ by $-\frac{1}{2}\d_t|a^\eps|^2$, the equation for
$a^\eps$ by $i\eps\d_t 
\overline a^\eps +\overline a^\eps \d_t \phi^\eps$. Sum up the two
equations, integrate in space, and take the real part.  
\end{proof}

\subsection{About other nonlinearities}
\label{sec:other}

Equation~\eqref{eq:NLS} is the defocusing cubic nonlinear
Schr\"odinger equation. Other nonlinearities are physically relevant
too: focusing or defocusing nonlinearities are considered, as well as
other powers, in the context
of laser Physics (see e.g. \cite{Sulem}) or in the context of
Bose--Einstein Condensation (see e.g. \cite{DGPS,JosserandPomeau}),
for instance.
\smallbreak

The (short time) semiclassical limit for nonlinear Schr\"odinger
equations has been studied rigorously for other
nonlinearities. Typically, for defocusing nonlinearities
\begin{equation*}
  i\eps \d_t u^\eps +\frac{\eps^2}{2}\Delta u^\eps =
  |u^{\eps}|^{2\si}u^\eps, \quad \si\in \N,
\end{equation*}
a result similar to Theorem~\ref{theo:grenier} is available; see
\cite{ACARMA,ChironRousset}. However, the analysis does not rely on an
extension of \eqref{eq:grenier} where $|a^\eps|^2$ would be replaced
with $|a^\eps|^{2\si}$: for $\eps=0$ (corresponding to the limiting
Euler equation in the case $\si\in \N$ too), one uses a nonlinear
symmetrizer (the ``good'' unknown is $(\nabla \phi, a^\si)$), and for
$\eps>0$, this change of variable affects the skew-symmetric term
$i\eps \Delta a^\eps$ in such a way that apparently the analysis of
\cite{Grenier98} cannot be directly adapted.
\smallbreak

For focusing
nonlinearities, typically
\begin{equation*}
  i\eps \d_t u^\eps +\frac{\eps^2}{2}\Delta u^\eps =
 - |u^{\eps}|^{2\si}u^\eps, \quad \si\in \N,
\end{equation*}
the limiting equation in the system analogous to \eqref{eq:euler} is
\emph{elliptic} (as opposed the hyperbolic system
\eqref{eq:euler}). It turns out that in this case, the ``elliptic
Euler system'' is ill-posed in Sobolev spaces (\cite{GuyCauchy}):
working with analytic regularity becomes necessary \cite{GuyCauchy},
and sufficient
\cite{PGX93,ThomannAnalytic} in order to justify the semiclassical
analysis.
\smallbreak

An hybrid nonlinearity (neither focusing, nor defocusing) also plays a
role in physical models: the cubic--quintic nonlinearity,
\begin{equation*}
  i\eps \d_t u^\eps +\frac{\eps^2}{2}\Delta u^\eps =
  |u^{\eps}|^{4}u^\eps+\lambda |u^{\eps}|^{2}u^\eps,
\end{equation*}
with $\l\in \R$ possibly negative. This model is mostly used as an envelope
equation in optics, is also considered in BEC for alkalimetal gases (see
e.g. \cite{JPhysB,PhysRevA63,PhysRevE}), in which case $\lambda
<0$. The cubic term corresponds to
a negative scattering length, and the quintic term to a repulsive
three-body elastic interaction. Justifying the semiclassical analysis
 was achieved in \cite{ACIHP} by a slight modification of the approach of
 \cite{Grenier98} (in a different
functional framework).
\smallbreak

To rephrase the above discussion, the approach in \cite{Grenier98} to
study the semiclassical limit for
\begin{equation*}
 i\eps \d_t u^\eps +\frac{\eps^2}{2}\Delta u^\eps =
  f\(|u^{\eps}|^{2}\)u^\eps
\end{equation*}
relies on the assumption $f'>0$. However, the analysis has been
carried out in several other situations, without considering the
natural generalization of \eqref{eq:grenierv},
\begin{equation}
  \label{eq:greniervgen}
  \left\{
    \begin{aligned}
      &\d_t v^\eps +v^\eps\cdot \nabla v^\eps +\nabla f\(|a^\eps|^2\)
      =0,\\
&\d_t a^\eps +v^\eps\cdot \nabla a^\eps +
\frac{1}{2}a^\eps\DIV v^\eps = i\frac{\eps}{2}\Delta a^\eps.
    \end{aligned}
\right.
\end{equation}
It seems reasonable to believe that even though no rigorous study for
this system is available in general (for $\eps>0$), this system can be
used for numerical simulations.
\smallbreak

Finally, the approach of \cite{Grenier98} was generalized to the case
where an external potential is introduced (which may model a confining
trap in the framework of Bose--Einstein Condensation), see
\cite{CaBook}, and  to the case of Schr\"odinger--Poisson system
\cite{AC-SP,LiLinEJDE,LiuTadmor,Ma-p}.

\section{Numerical implementation}
\label{sec:scheme}

One expects the oscillatory nature of the solutions to be
difficult to capture numerically. We would like to use a stable
numerical scheme with the time step independent of $\eps$ but function
of $h$. The scheme solves  system \eqref{eq:grenier} on coarser
meshes than what necessary to capture all wavelengths. Therefore,
the solution has inevitably error in it. Still we give a great
deal of effort on conservation issues for the density, energy and
momentum. The time step and mesh size being both independent of
$\eps$, one can tackle very small $\eps$ values and the scheme
also works for $\eps=\O(1)$. Obviously, if all scales are aimed
at being captured, then the space grid size need be of order of $\eps$
or less and so the time step.

Present results show that the scheme being conservative and stable
macroscopic quantities remain observable even when
 the spatial-temporal oscillations are not fully resolved numerically
 because the mesh is not enough fine to 
 capture wavelengths below $h$. Typically, one uses $h=0.01$ for all
 $\eps$ in a square domain of side one. 

 In our approach, conservation is ensured by projection steps to
 guarantee a correct behavior for 
 total position, energy and current (momentum) densities.
 The aim  is also to show that basic numerical methods
 \cite{strang,pratic}  can be used which permits the adaptation of
 generic PDE solvers. We also point out that we have privileged
 projections which are rather cheap computationally, since they are
 obtained by a simple rescaling. 

 The implementation has been done in two dimensions in space but
 extension to third dimension does not appear being a
 difficulty. Periodic boundary conditions and initial data with
 compact support have been considered. 

 Let us start with system \eqref{eq:grenierv} which we rewrite as:
\begin{equation}
  \label{systeps}
  \d_t U^\eps + F(U^\eps) = 0,
\end{equation}
$$ U^\eps(x\in \Omega,t=0)=U_0(x), ~~U^\eps(\partial \Omega,t)=\hbox{periodic}.$$
 where $U^\eps=(a^\eps,v^\eps)$. $\Omega_h$ is a discrete two dimensional square
  domain of side $L$. $U_0(x)$  is a regular  initial condition. For
  all the  simulations  presented in this paper we consider
 $$ U_0(x)=(a_0(x),\alpha f(x), \alpha g(x))^t, $$
 with $a_0(x)$ a complex function  independent of $\eps$ with compact support.
 $\alpha$ is real and $f$ and $g$ are real functions with compact support.
 The initial pattern is therefore periodic of period $L$ in both space
 directions. 
 Together with the periodicity, oscillations in space can be
 introduced through $a_0$, $f$ and $g$. 
Below we show numerical results with two values of $\alpha$.

We consider second order finite difference discretizations of
partial differential operators. But, the periodic boundary
conditions permit the implementation of high order spatial
discretizations as well as spectral methods. One notices that
despite the presence of first order space derivatives, no
numerical viscosity is necessary to stabilize the system both in
the hydrodynamic limit and for $\eps \neq 0$. We therefore keep
the numerical viscosity to zero for all simulations which means no
upwinding has been used. This leads to a consistent scheme with
truncation error in $h^2$:
$$ F(U^\eps)=F_h(U^\eps) + \O(h^2).$$

We consider a simple first order explicit time integration scheme:
\begin{equation}
  \label{systepsh}
{1\over k}(U^\eps_{h,n+1/2}-U^\eps_{h,n}) + F_h(U^\eps_{h,n}) = 0,
\end{equation}
$$ U^\eps_{h,0}=U_0(x_h), ~~U^\eps_{h,n+1/2}(\partial
\Omega_h)=\hbox{periodic}.$$ 
$n+1/2$ denotes an intermediate state, before projection, where
conservation  is not guaranteed for mass and momentum. It is
interesting that the approach appears stable even for explicit
time integration. With a first order scheme in time, and  a time
step in $h^2$, the time integration error will be comparable to
the truncation error in space.

Once $U^\eps_{h,n+1/2}$ is computed, one needs to project it over the
admissible space to get $U^\eps_{h,n+1}$
based on enforcing mass, energy and momentum conservation constraints
(see Proposition~\ref{prop:conservations}):
$$J_{1,2,3}(U^\eps_{h,n+1/2},U^\eps_{h,0})={I_{1,2,3}(U^\eps_{h,n+1/2})\over
  I_{1,2,3}(U^\eps_{h,0})} = 1,$$
where
\begin{align*}
  I_1(U^\eps_{h,n+1/2}) &=\int_{\Omega_h} |a^\eps_{h,n+1/2}|^2 dx,\\
I_2(U^\eps_{h,n+1/2})& = \int_{\Omega_h} \left(|a^\eps_{h,n+1/2}|^4  +
\left\lvert \eps \nabla_h a^\eps_{h,n+1/2}  + i a^\eps_{h,n+1/2}
v^\eps_{h,n+1/2}\right\rvert^2 \right) dx,\\
I_3(U^\eps_{h,n+1/2})&=\int_{\Omega_h} \left( |a^\eps_{h,n+1/2}|^2
  v^\eps_{h,n+1/2} +\eps \IM \(\overline 
  a^\eps_{h,n+1/2} \nabla a^\eps_{h,n+1/2}\) \right) dx.
\end{align*}
%$$ I_1(U^\eps_{h,n+1/2}) =\int_{\Omega_h} |a^\eps_{h,n+1/2}|^2 dx,$$
%$$ I_2(U^\eps_{h,n+1/2}) = \int_{\Omega_h} \left(|a^\eps_{h,n+1/2}|^4  +
%\left\lvert \eps \nabla_h a^\eps_{h,n+1/2}  + i a^\eps_{h,n+1/2}
%v^\eps_{h,n+1/2}\right\rvert^2 \right) dx,$$
%and
%$$I_3(U^\eps_{h,n+1/2})=\int_{\Omega_h} \left( |a^\eps_{h,n+1/2}|^2
%v^\eps_{h,n+1/2} +\eps \IM \(\overline 
%  a^\eps_{h,n+1/2} \nabla a^\eps_{h,n+1/2}\) \right) dx.$$
$J_3$ is a vector of the size $d$ of the space dimension. This problem
is overdetermined  with essentially two variables ($a^\eps$ and
$v^\eps$). This overdetermination is maybe one reason why no
numerical scheme is available for these equations verifying all
conservation constraints. With $a^\eps$ complex, there are as many
variables as constraints. Still we did not manage to enforce at
the same time the mass $J_1$ and energy $J_2$ constraints.
We have chosen here to enforce $J_1$ and $J_3$.

$J_1$ can be easily enforced in $a^\eps_{h,n+1}$ by simply defining:
$$ a^\eps_{h,n+1} = a^\eps_{h,n+1/2} \left({I_{1}(U^\eps_{h,0})\over
    I_{1}(U^\eps_{h,n+1/2})}\right)^{1/2}. $$ 

The  projection  aims at looking for a particular equilibrium for
the constraints after a splitting of the variables. The above scaling
suggests an \emph{a priori} but natural
splitting of the variables to be modified by each constraint. More
precisely, $J_1$ defines the
corrections for $a_{h,n+1/2}^\eps$ and the vector $J_3$ the ones for
the components $(v_{h,n+1/2}^\eps)_j$ of the
velocity through:
\begin{equation}
(\tilde{I}_3)^j=\int_{\Omega_h} \left( |a^\eps_{h,n+1}|^2
  (v^\eps_{h,n+1/2})_j +\eps \IM \(\overline
  a^\eps_{h,n+1} \d_j a^\eps_{h,n+1}\) \right) dx, ~~ j=1,\ldots,d.
  \label{i3semi}
  \end{equation}
  Because we are looking for a cheap projection based on scaling, we
  adopt the following corrections for each 
  component of $v_{h,n+1/2}^\eps$:
$$ (v^\eps_{h,n+1})_j = (v^\eps_{h,n+1/2})_j
\left({I^j_{3}(U^\eps_{h,0})\over \tilde{I}^j_{3}}\right), ~~
j=1,\ldots,d. $$
Through the numerical examples below we see that these scalings are
efficient in conserving mass and current densities.

\section{Numerical experiments}
\label{sec:exp}
We show the application of our projection schemes for several initial
conditions. In the first
 case the current density is nearly zero and not in the second. A
 third case shows the robustness of the approach
 with initial vanishing $a^\eps$.  We show the impact of the
 projection on the conservation of
mass, energy and momentum through $J_1$, $J_2$ and $J_3$. We will
see that  mass and energy cannot be both conserved at the same time.

\subsection{Nearly zero initial current}

We consider $L=0.5$, $a_0(x)=\exp(-80((x_1-L/2)^2+(x_2-L/2)^2)) (1+i)$
and $\alpha=10^{-10}$ (hence $v^\eps_{\mid t=0}\approx 0$).

Figures~\ref{iso_rj01} shows the initial position and current
densities.  Figures~\ref{iso_rj1_ssproj} and \ref{iso_rj1} show
the solutions at $T=0.1 \sec$ for $\eps=0, 0.001, 0.01$ and 0.1
without and with the projection steps.

%%%%%%%%%%%%%%%%%%%%%%%

\begin{figure}[!htp!]
\begin{center}
\begin{tabular}{ll}
\includegraphics[width=5cm,angle=-90]{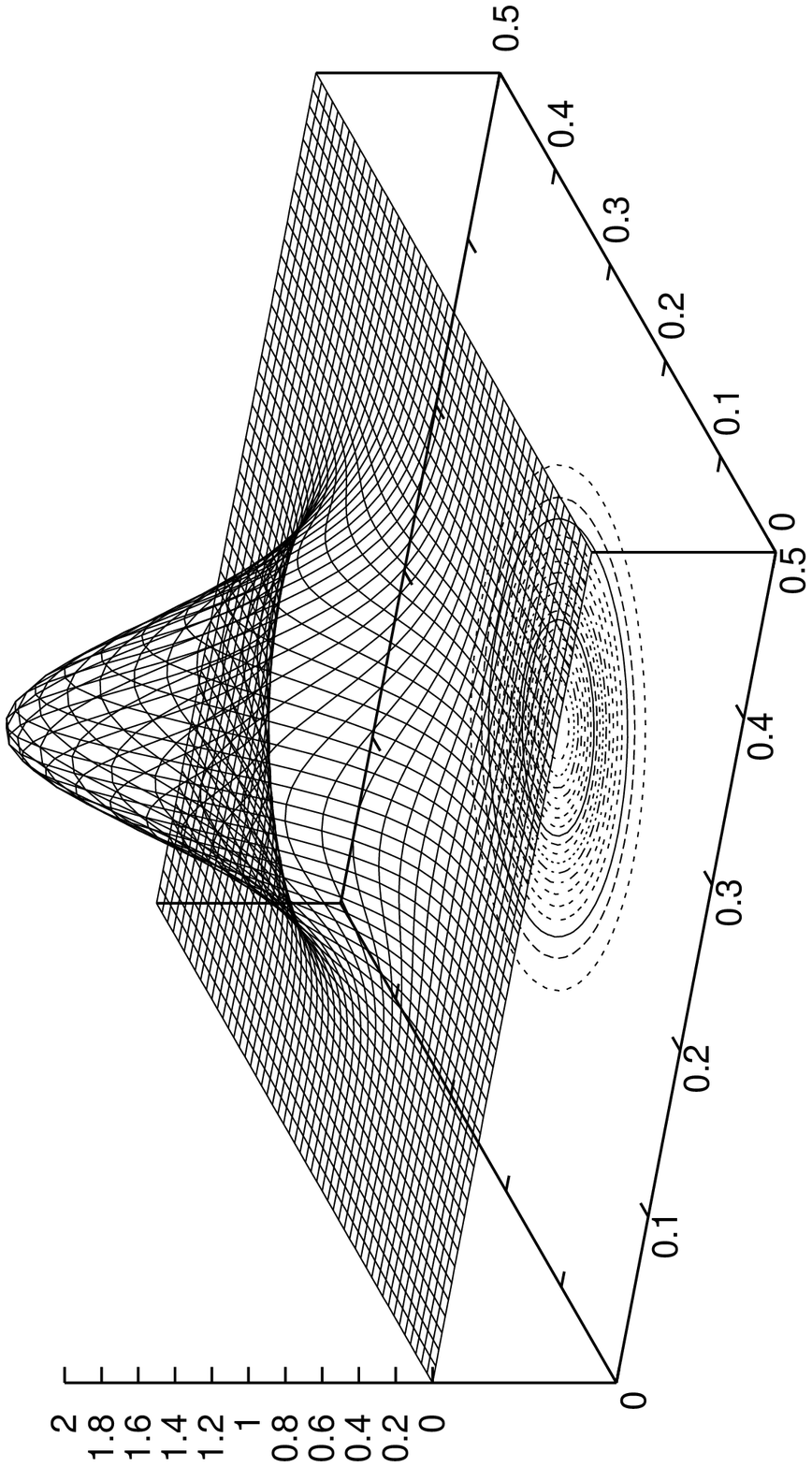}&
\hspace{-1.5cm}
\includegraphics[width=5cm,angle=-90]{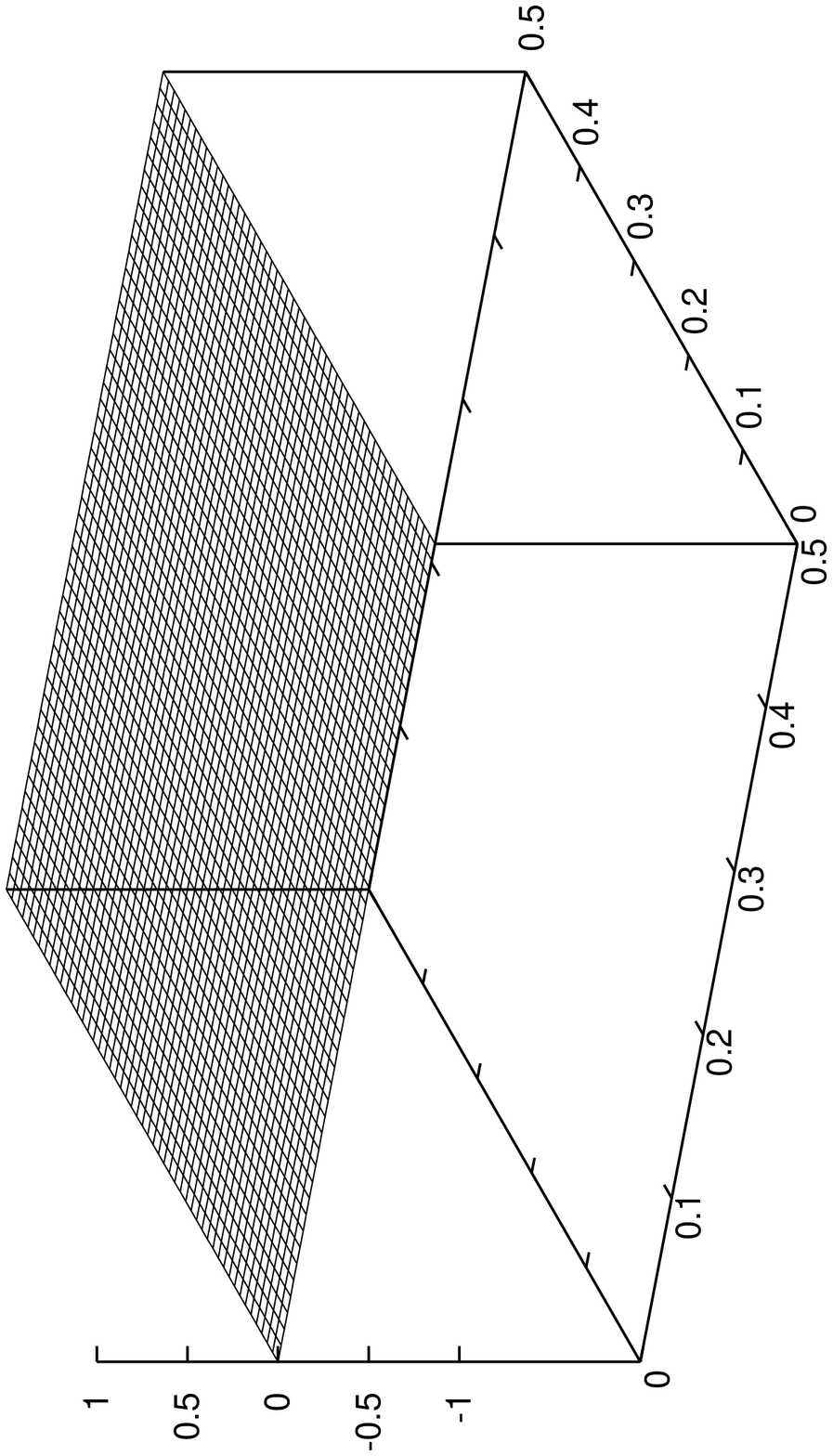}
\end{tabular}
\caption{\label{iso_rj01} Initial position (left) and norm of the
 current density vector (right) with $\alpha=10^{-10}$.}
\end{center}

\end{figure}

%%%%%%%% sans proj

\begin{figure}[!htp!]
\begin{center}
\begin{tabular}{ll}
\vspace{-1.5cm}
\includegraphics[width=5cm,angle=-90]{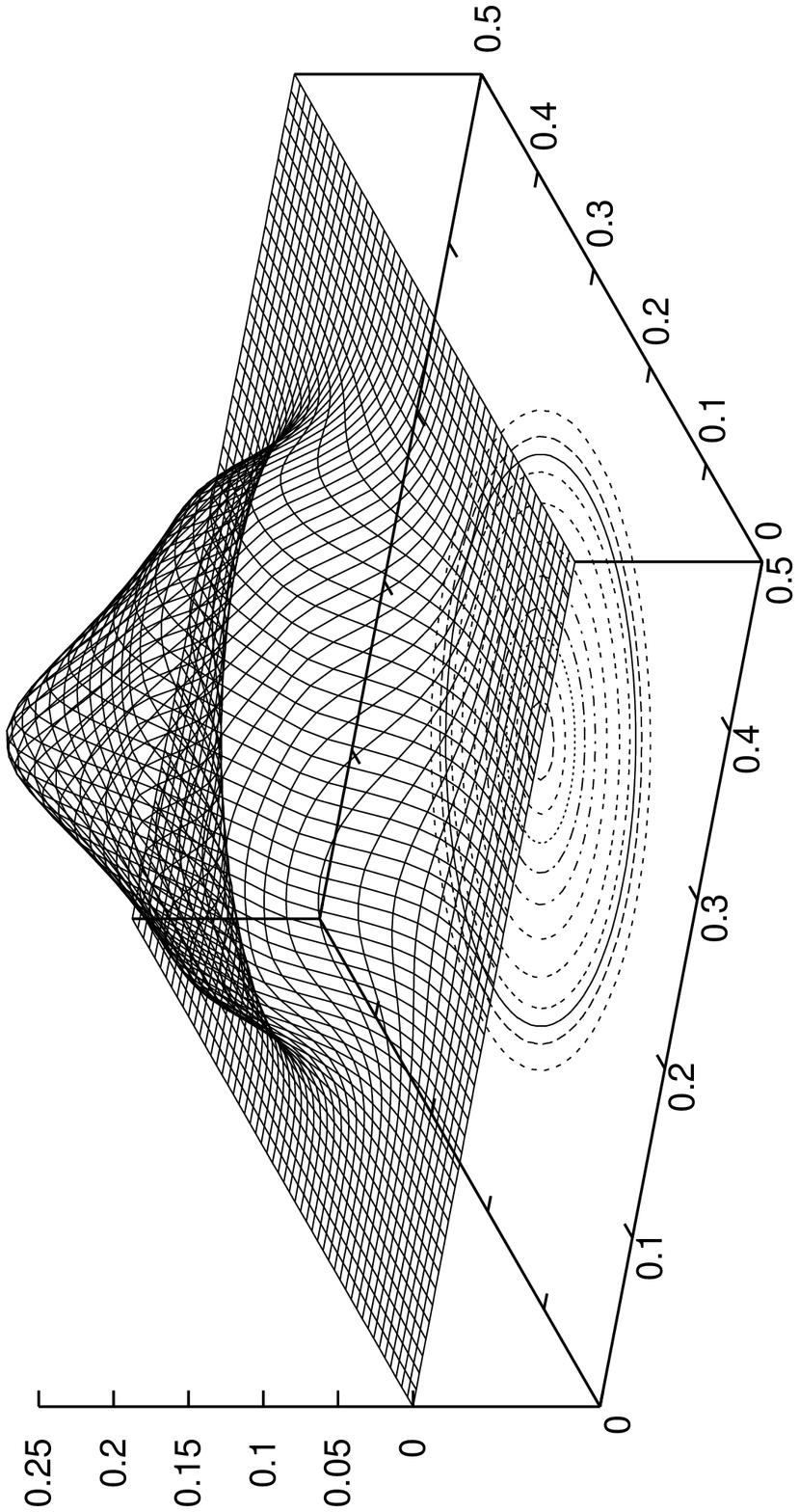}&
\hspace{-1.5cm}
\includegraphics[width=5cm,angle=-90]{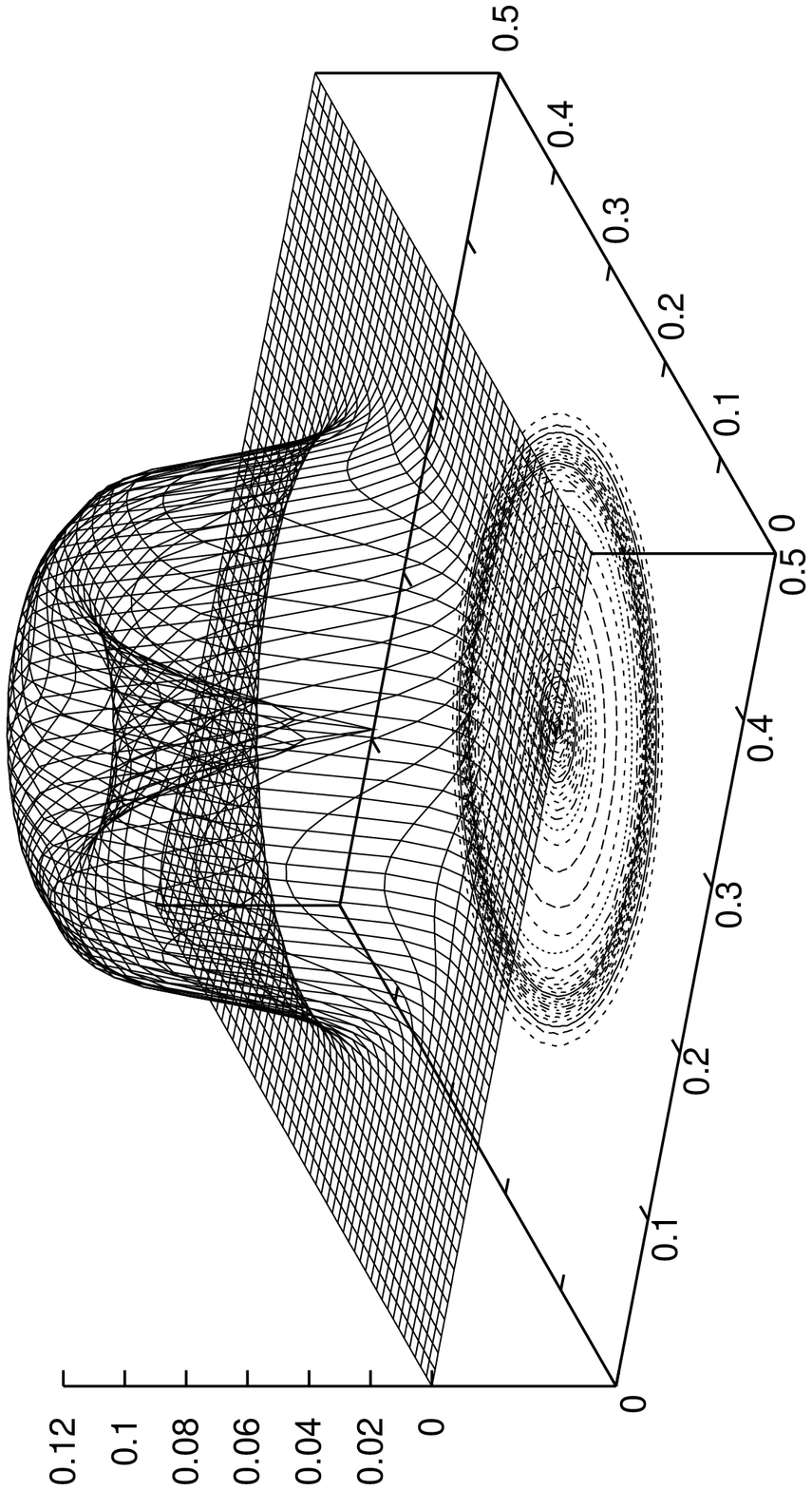}\\
\vspace{-1.5cm}
\includegraphics[width=5cm,angle=-90]{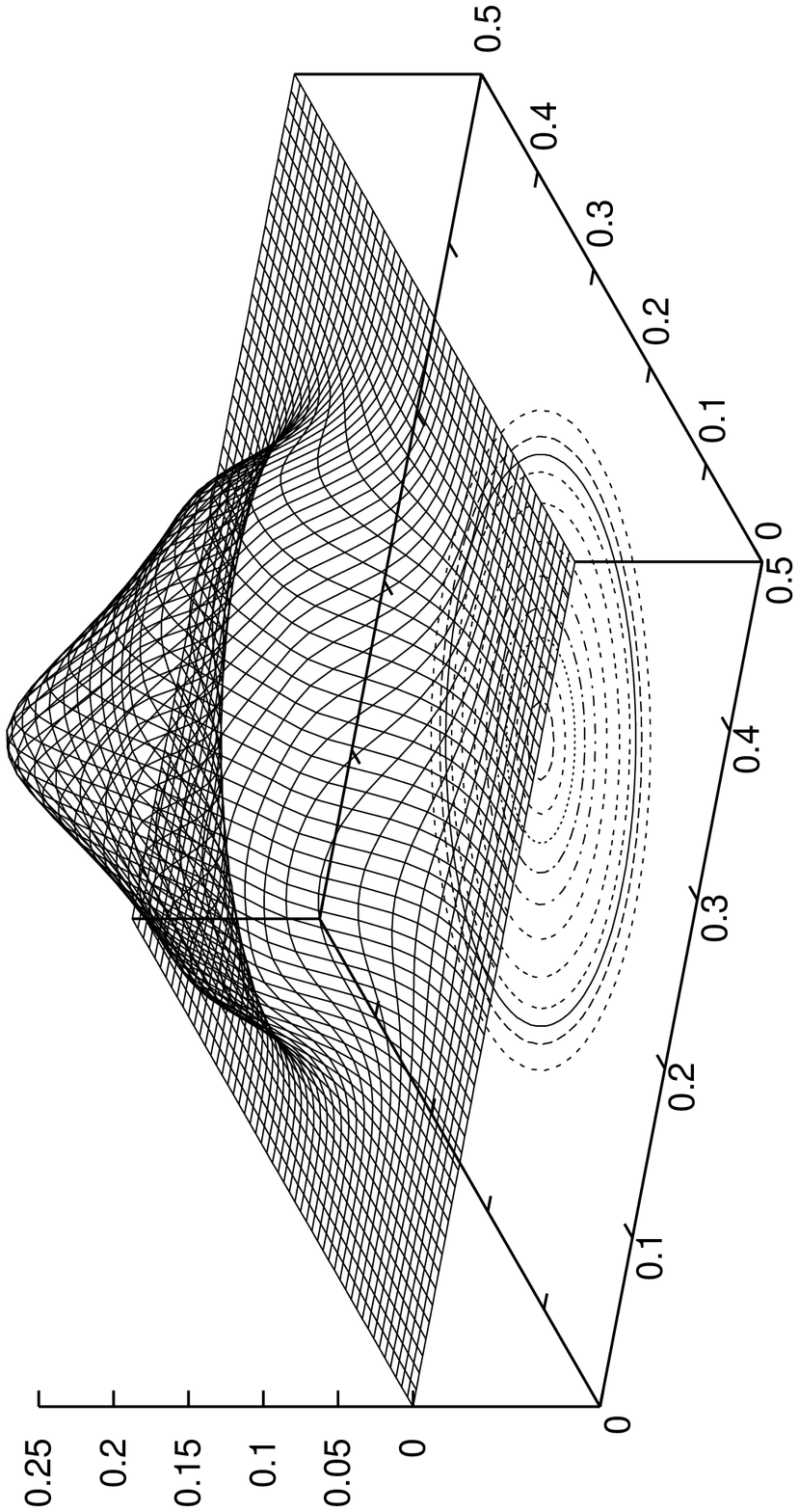}&
\hspace{-1.5cm}
\includegraphics[width=5cm,angle=-90]{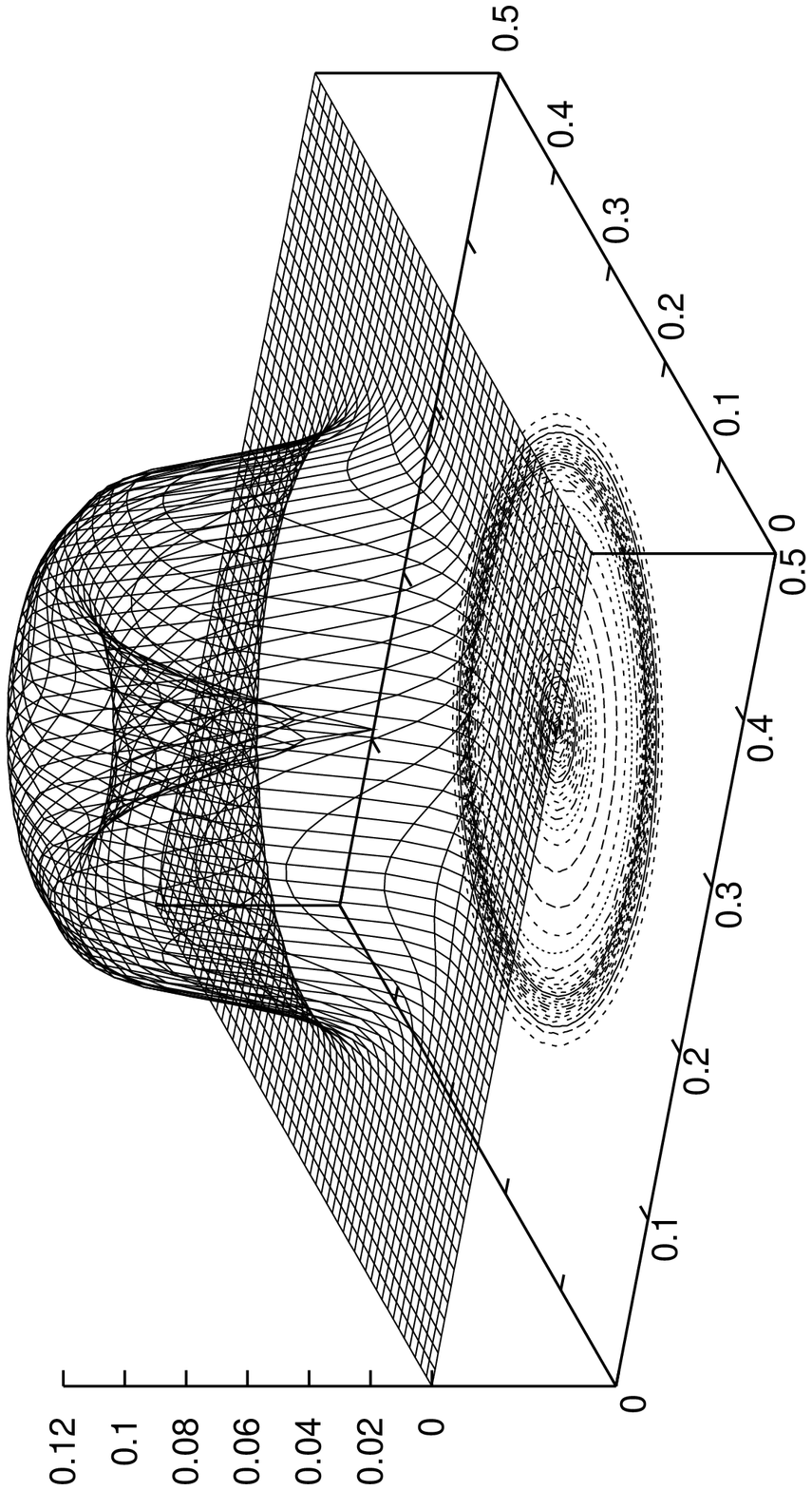}\\
\vspace{-1.5cm}
\includegraphics[width=5cm,angle=-90]{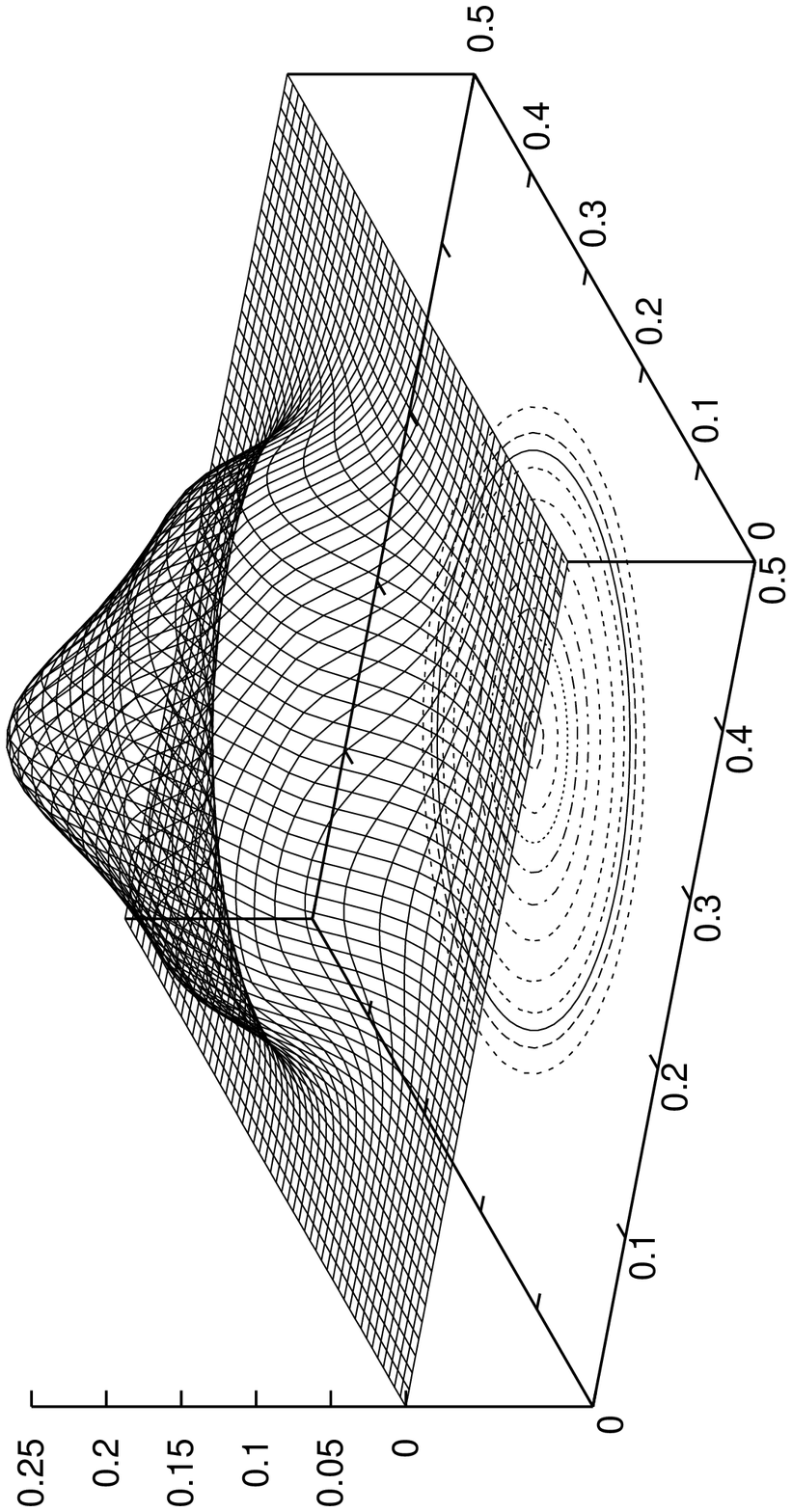}&
\hspace{-1.5cm}
\includegraphics[width=5cm,angle=-90]{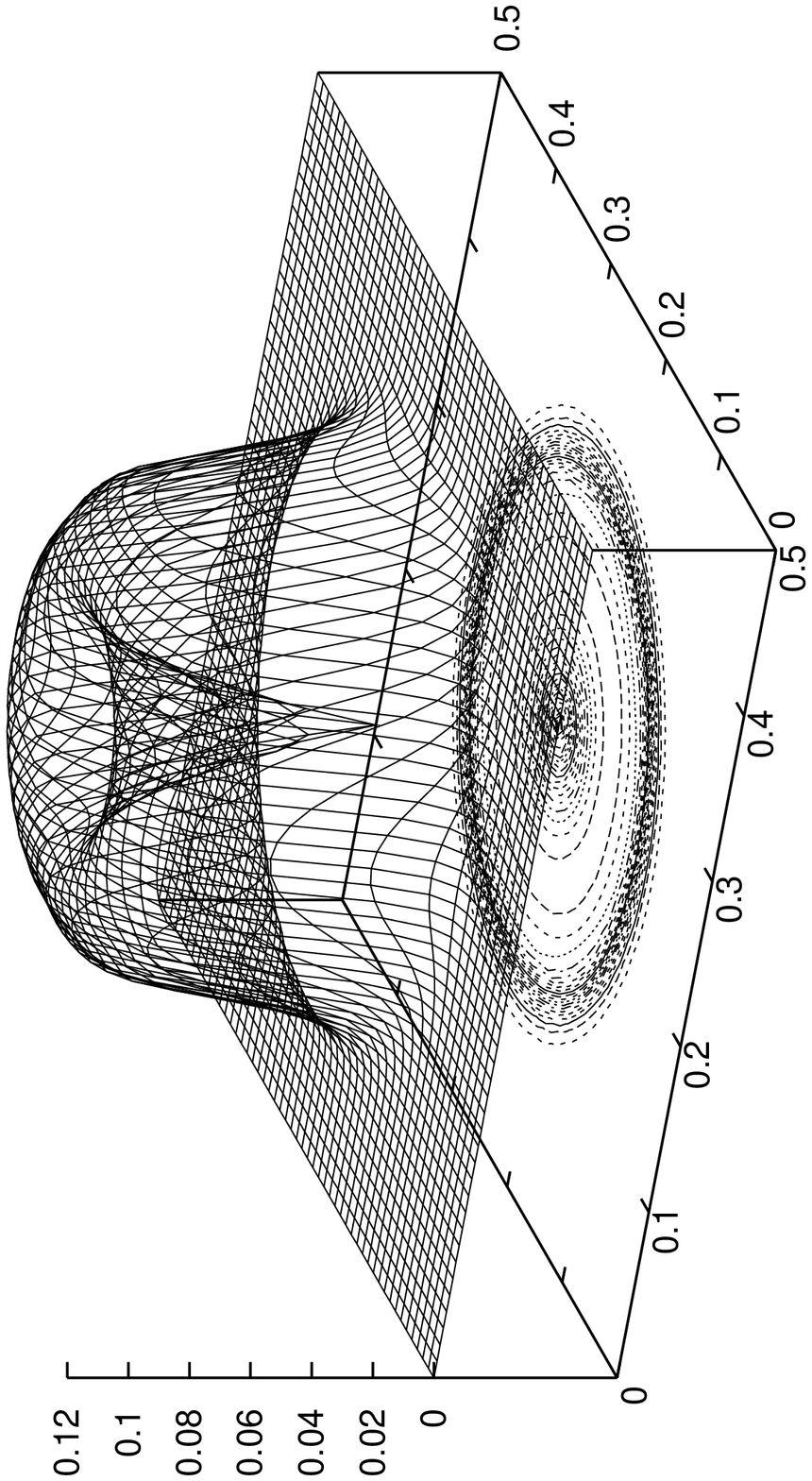}\\
\includegraphics[width=5cm,angle=-90]{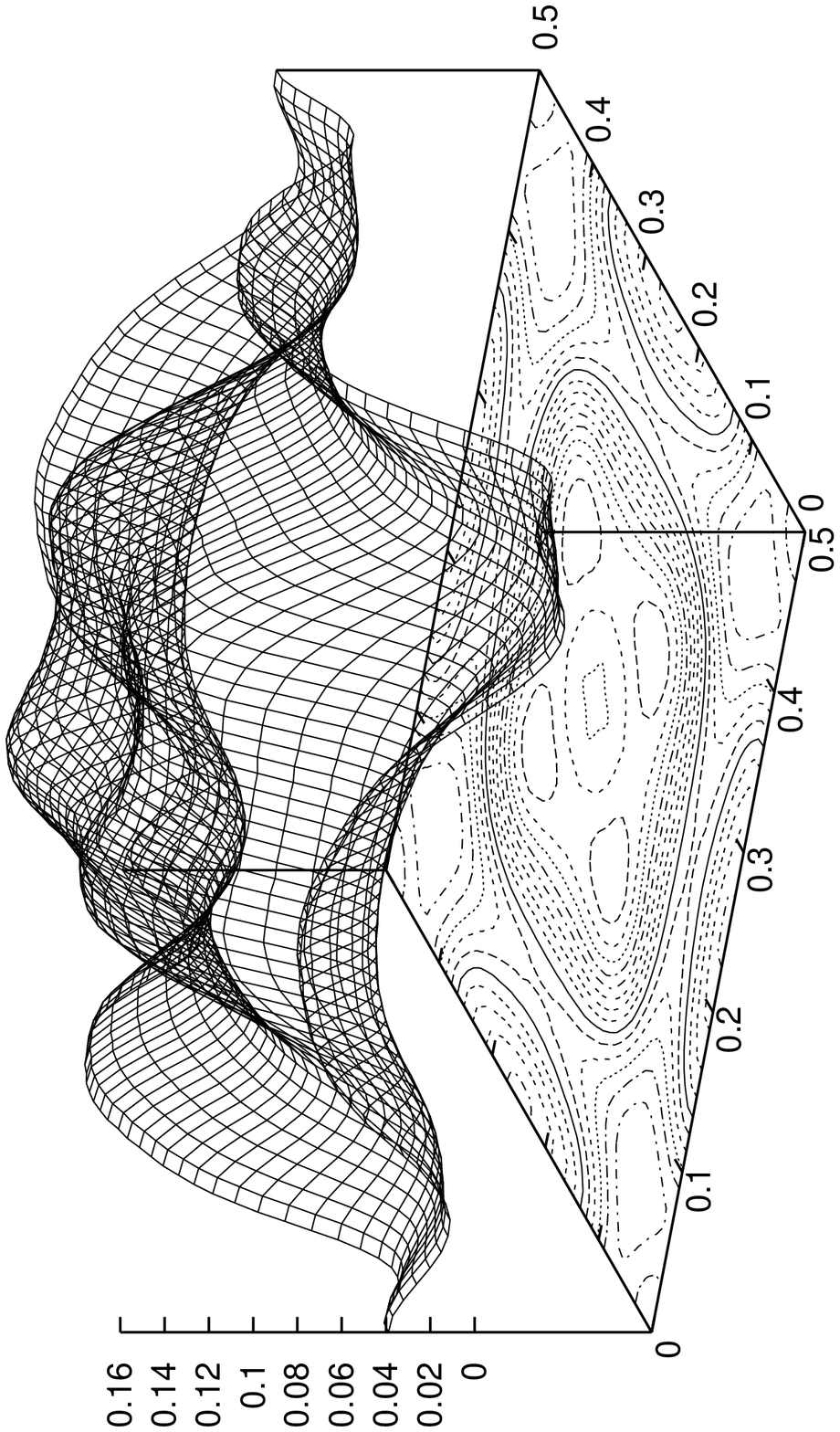}&
\hspace{-1.5cm}
\includegraphics[width=5cm,angle=-90]{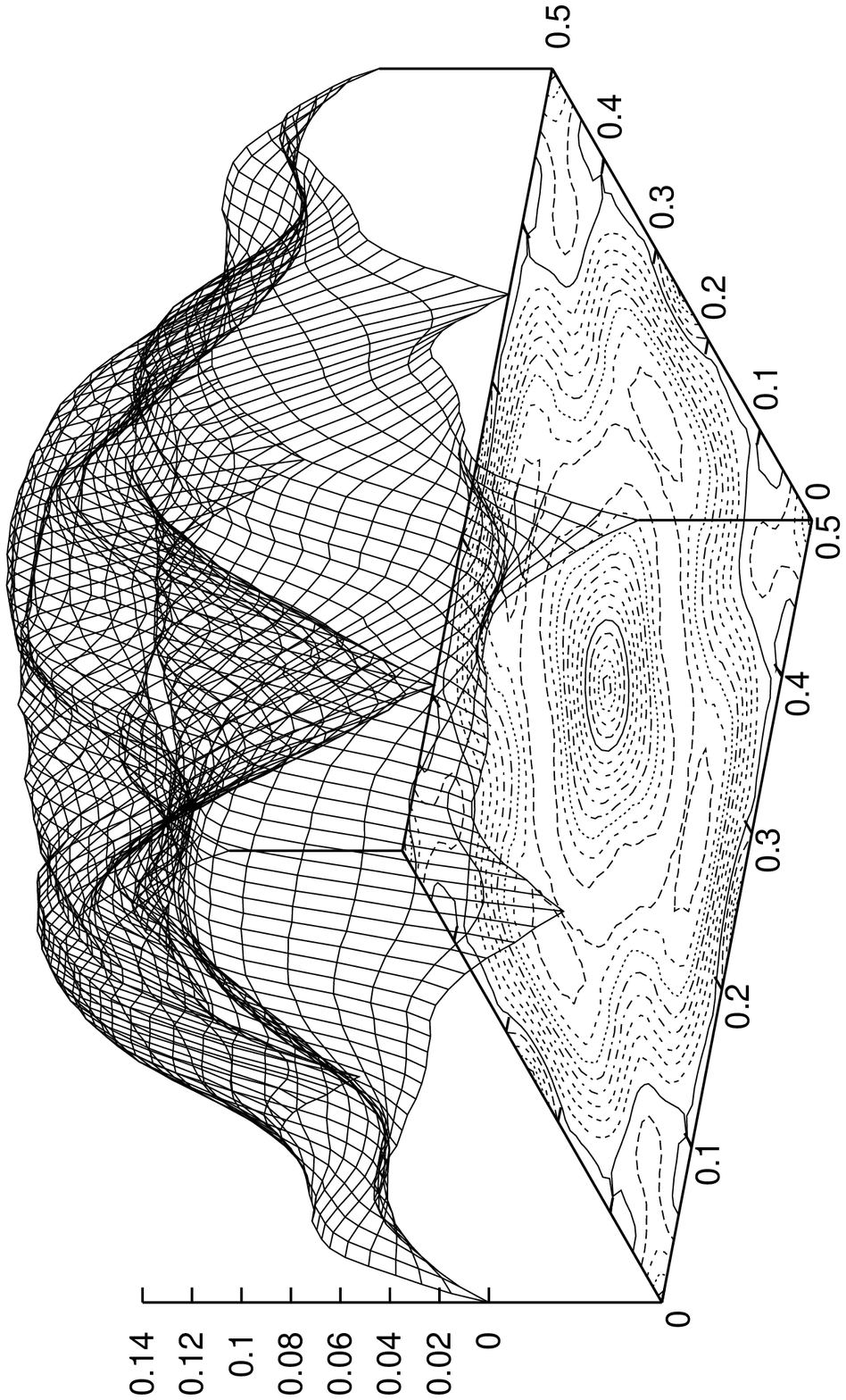}
\end{tabular}
\caption{ \label{iso_rj1_ssproj} Without projection. Position (left column) and norm of the current
density vector (right column) at $T=0.1 \sec$ for (resp. from the top)
 $\eps=0, 0.001, 0.01$ and $0.1$ with $\alpha=10^{-10}$.}
\end{center}
\end{figure}

\begin{figure}[!htp!]
\begin{center}
\includegraphics[width=6cm,angle=-90]{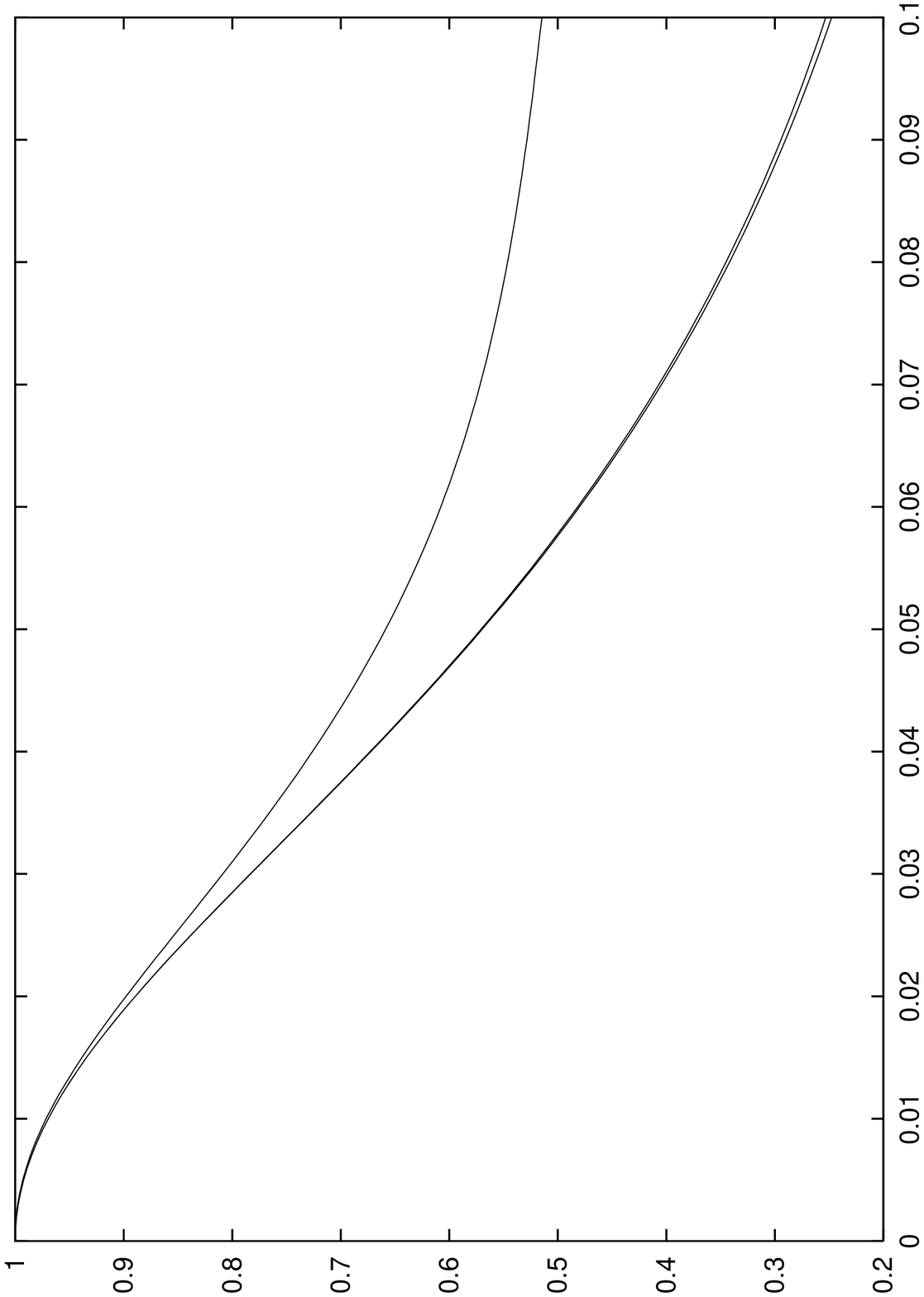}
\includegraphics[width=6cm,angle=-90]{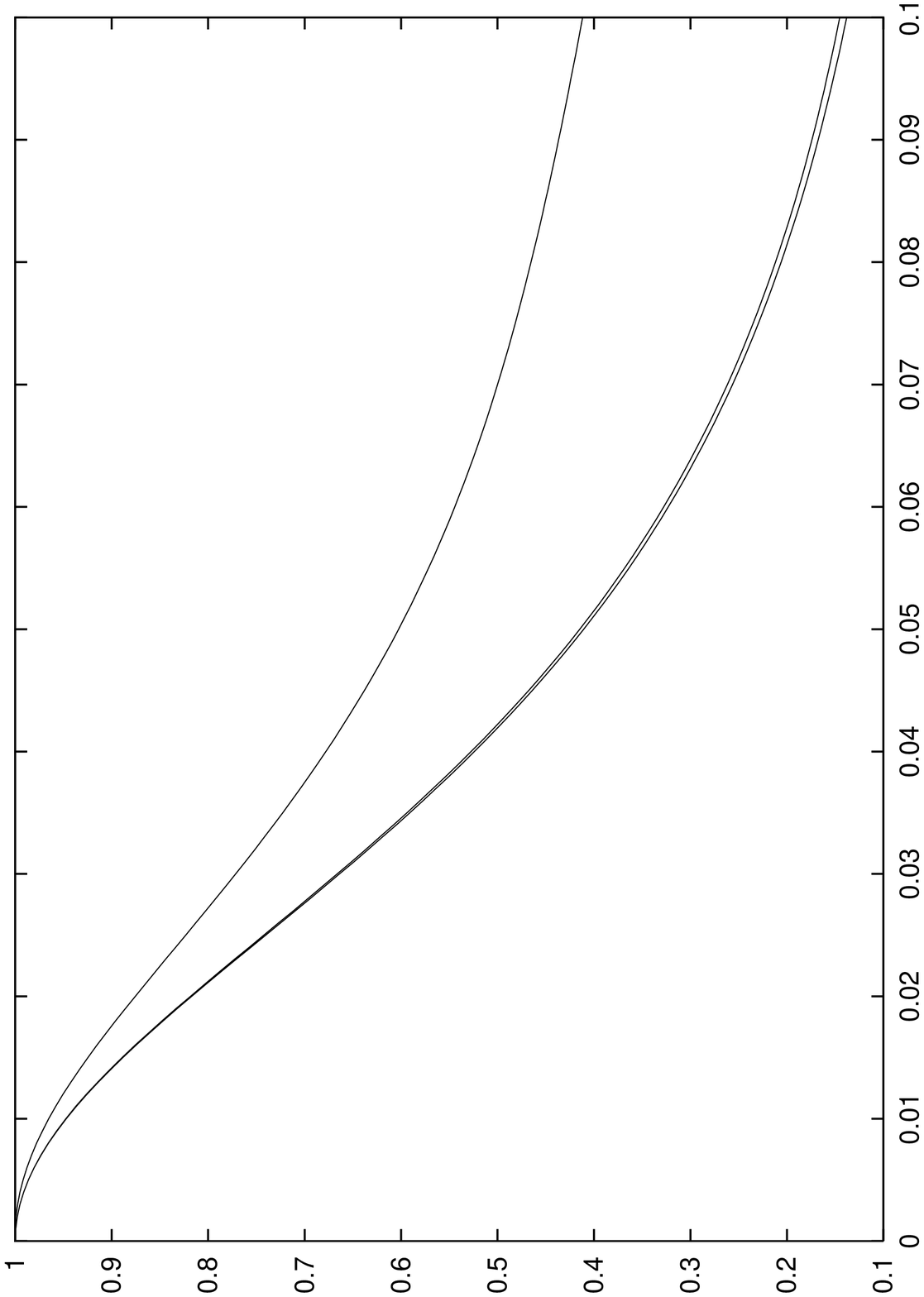}
\includegraphics[width=6cm,angle=-90]{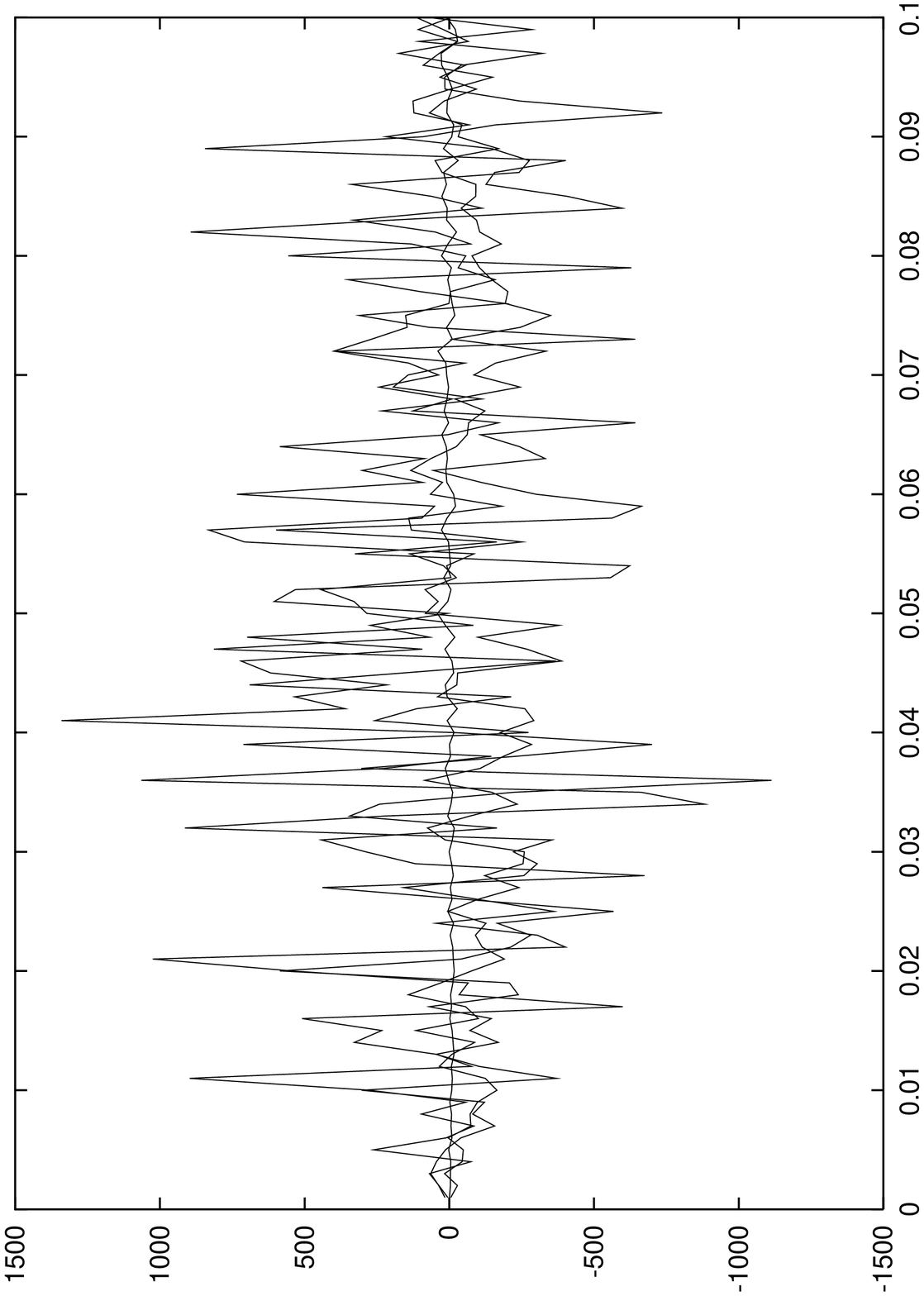}
\end{center}
\caption{ \label{res_roej1_ssproj} Without projection. Evolution  in
  time ($\sec$) of (resp. from the top) the constraints on the
  position density, energy and sum of both components of the current
density  for  $\eps=0, 0.001, 0.01$ and $0.1$ for an initial condition
with $\alpha=10^{-10}$.}
\end{figure}

%% avec proj

\begin{figure}[!htp!]
\begin{center}
\begin{tabular}{ll}
\vspace{-1.5cm}
\includegraphics[width=5cm,angle=-90]{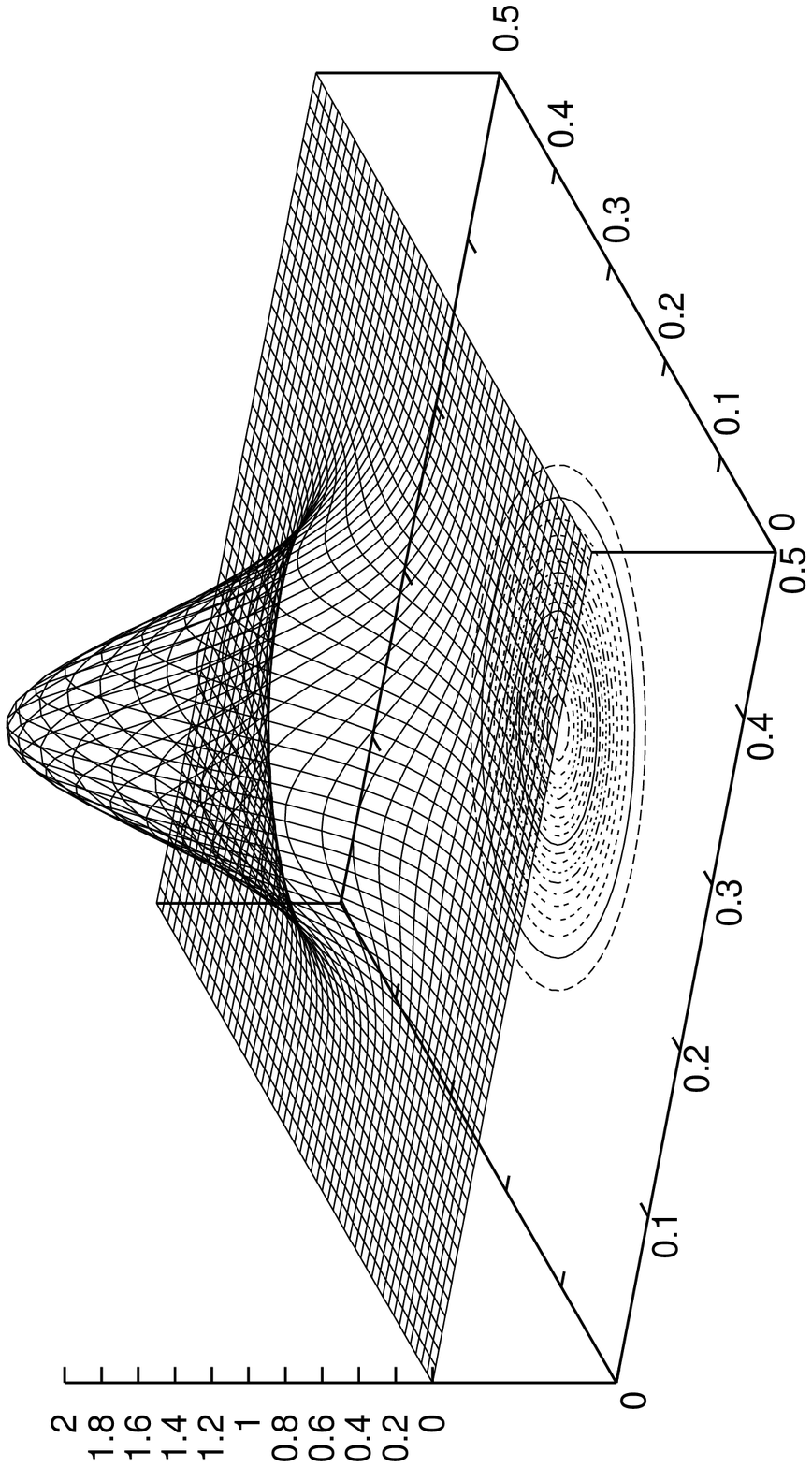}&
\hspace{-1.5cm}
\includegraphics[width=5cm,angle=-90]{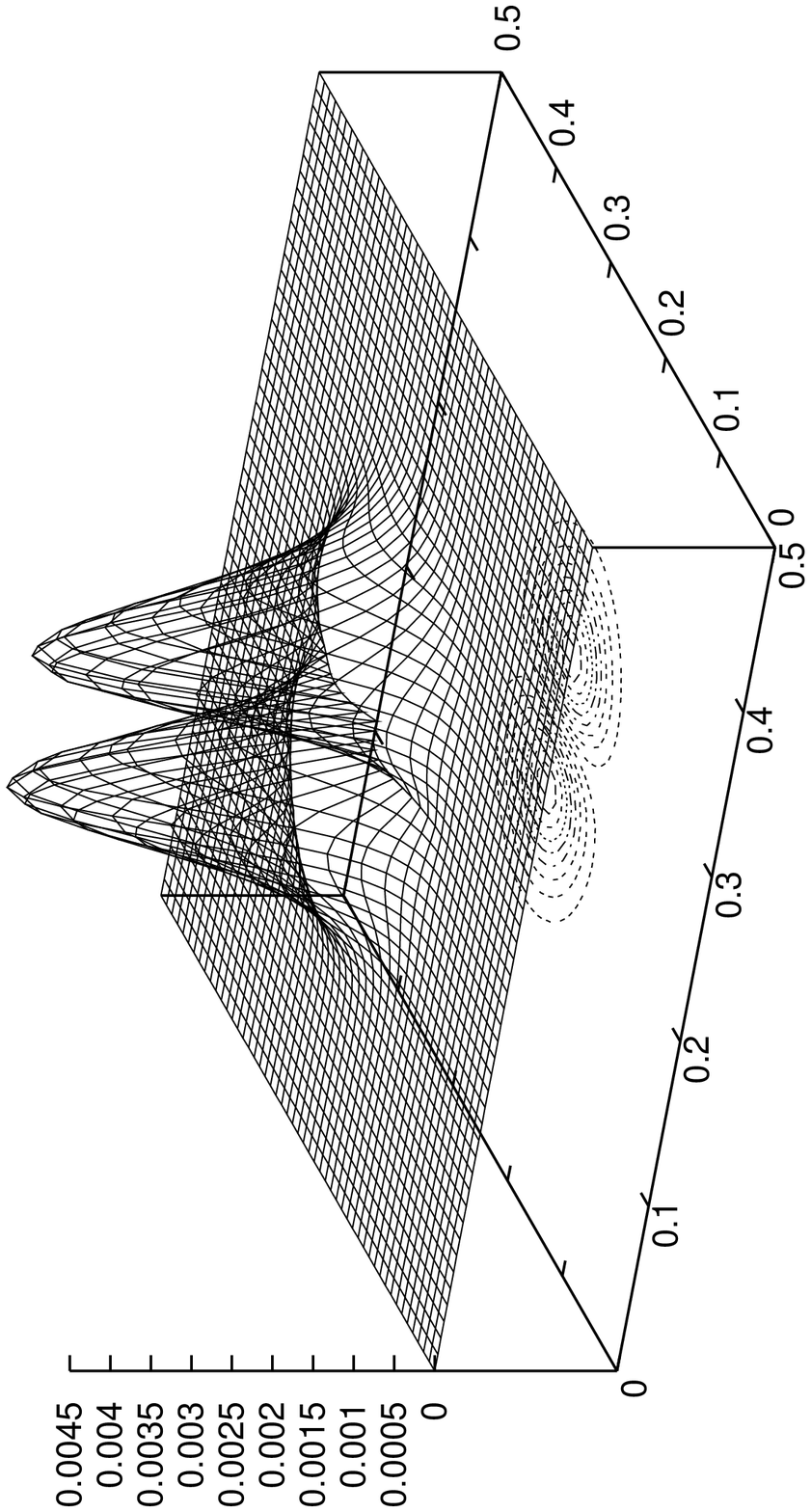}\\
\vspace{-1.5cm}
\includegraphics[width=5cm,angle=-90]{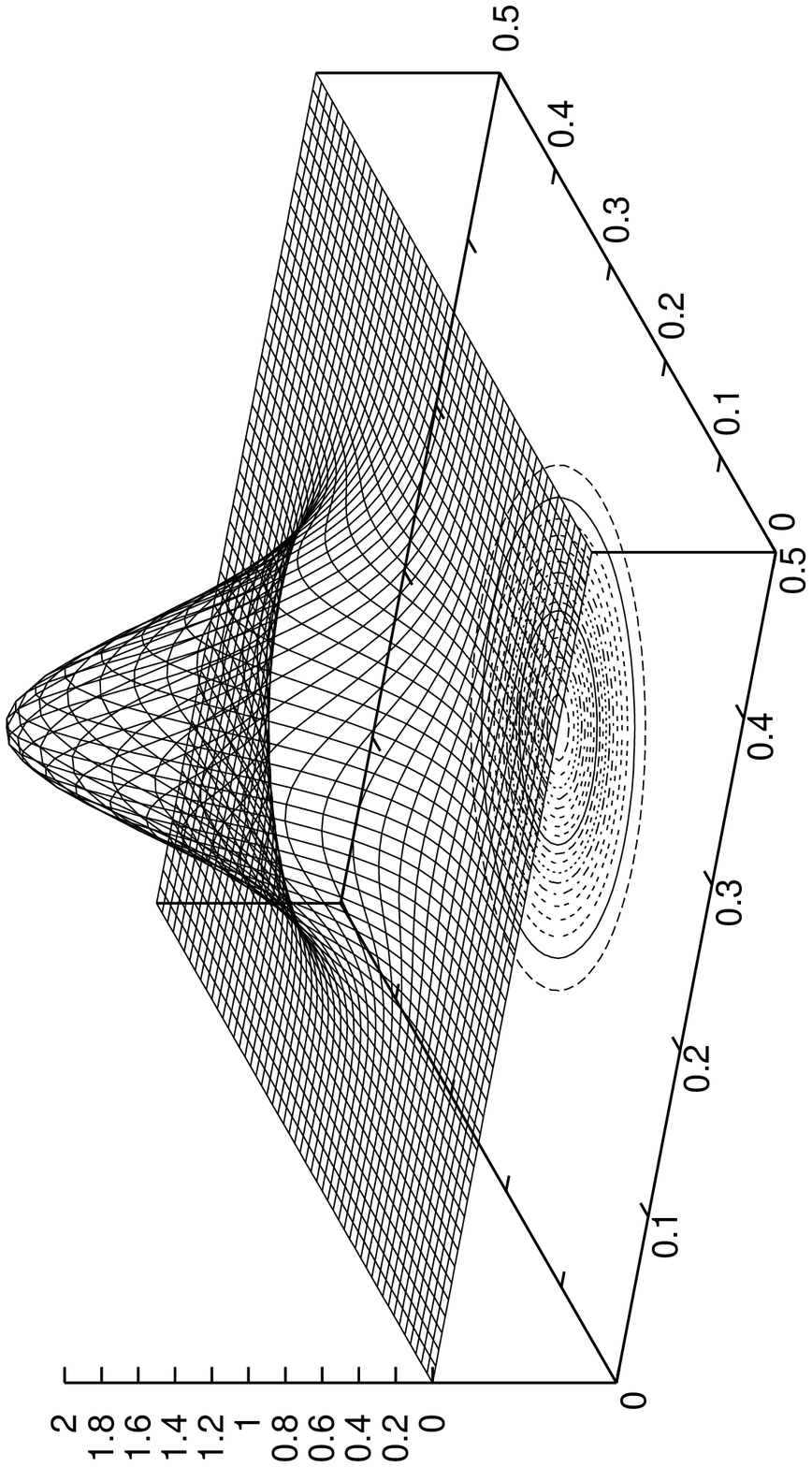}&
\hspace{-1.5cm}
\includegraphics[width=5cm,angle=-90]{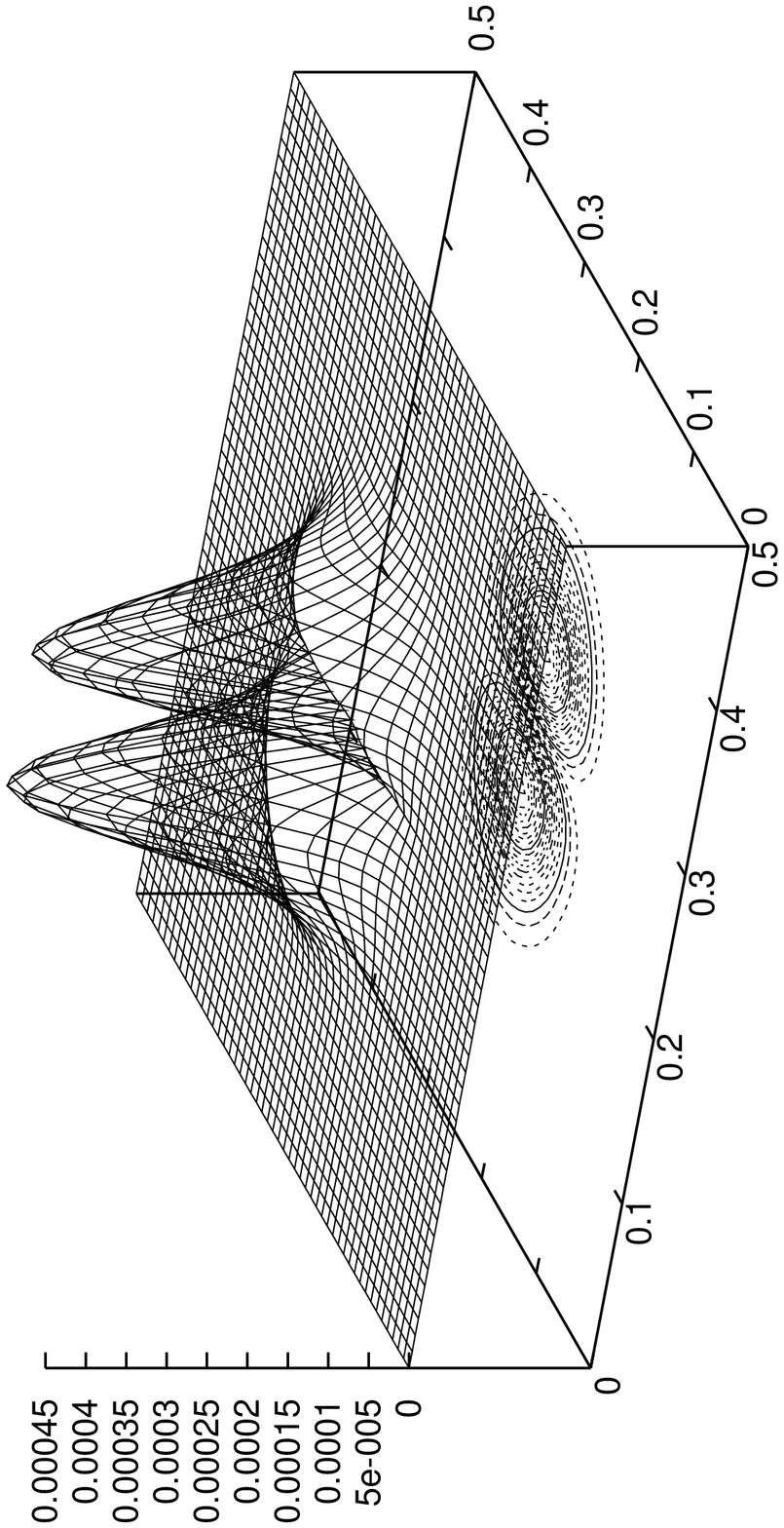}\\
\vspace{-1.5cm}
\includegraphics[width=5cm,angle=-90]{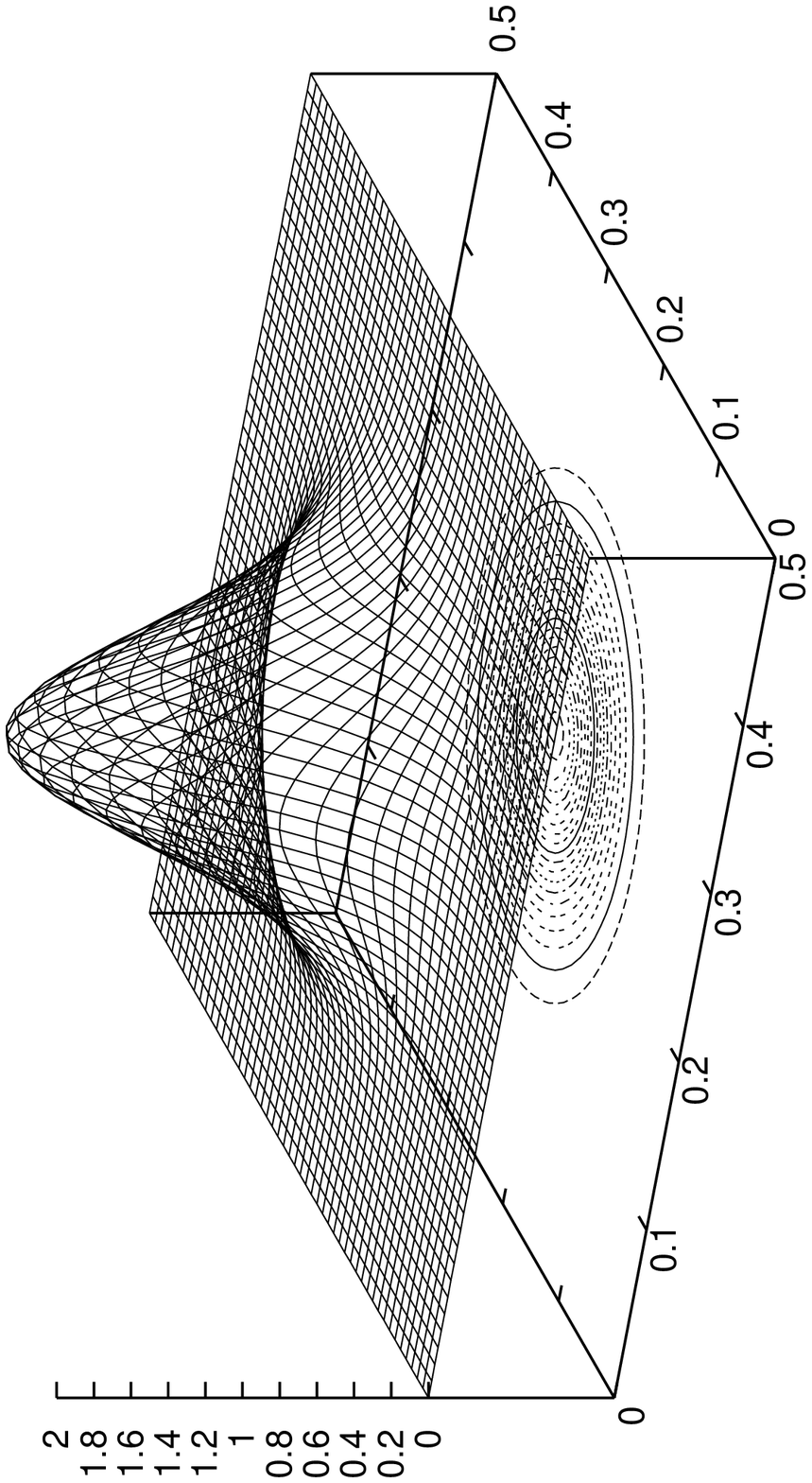}&
\hspace{-1.5cm}
\includegraphics[width=5cm,angle=-90]{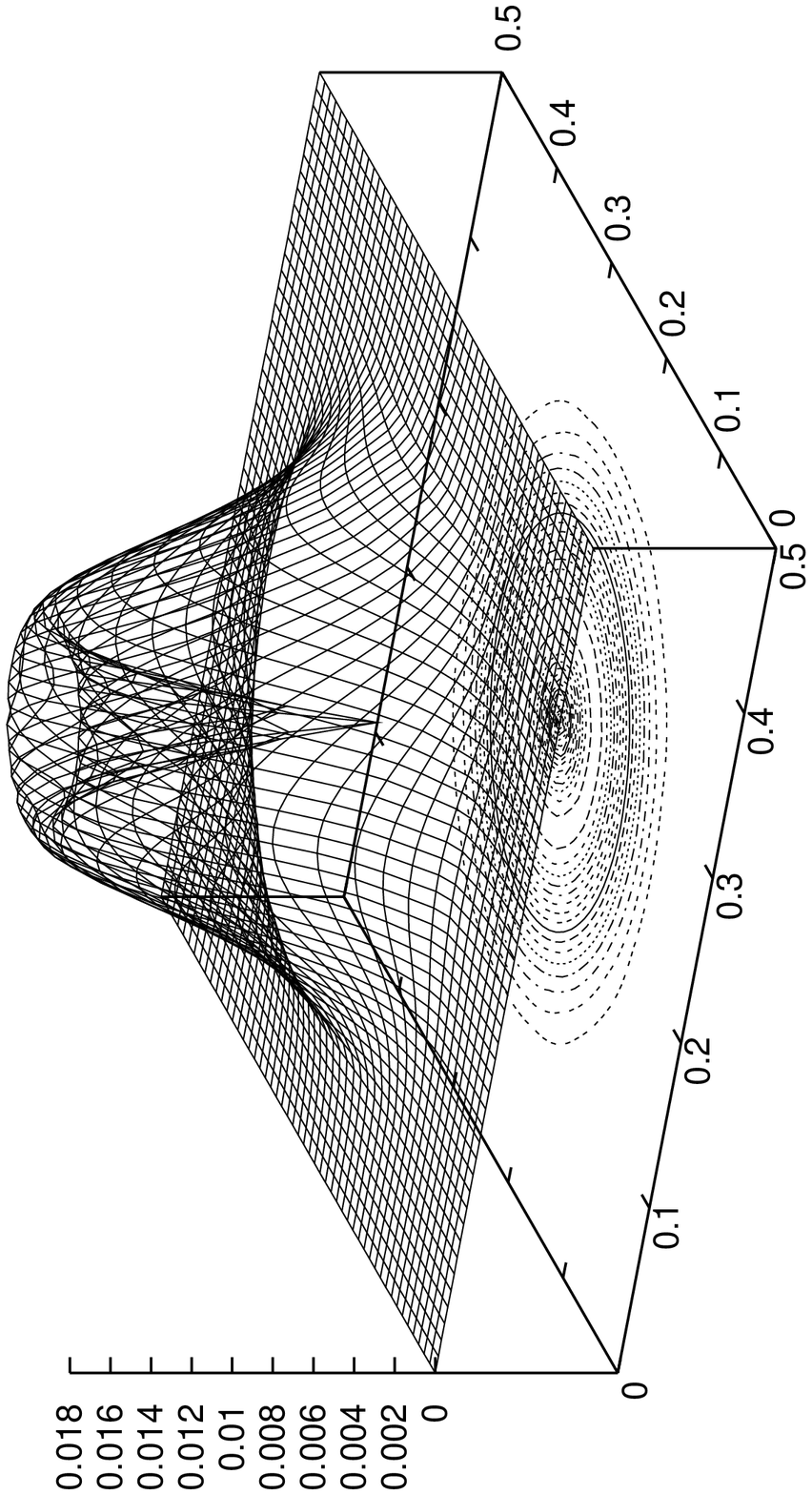}\\
\includegraphics[width=5cm,angle=-90]{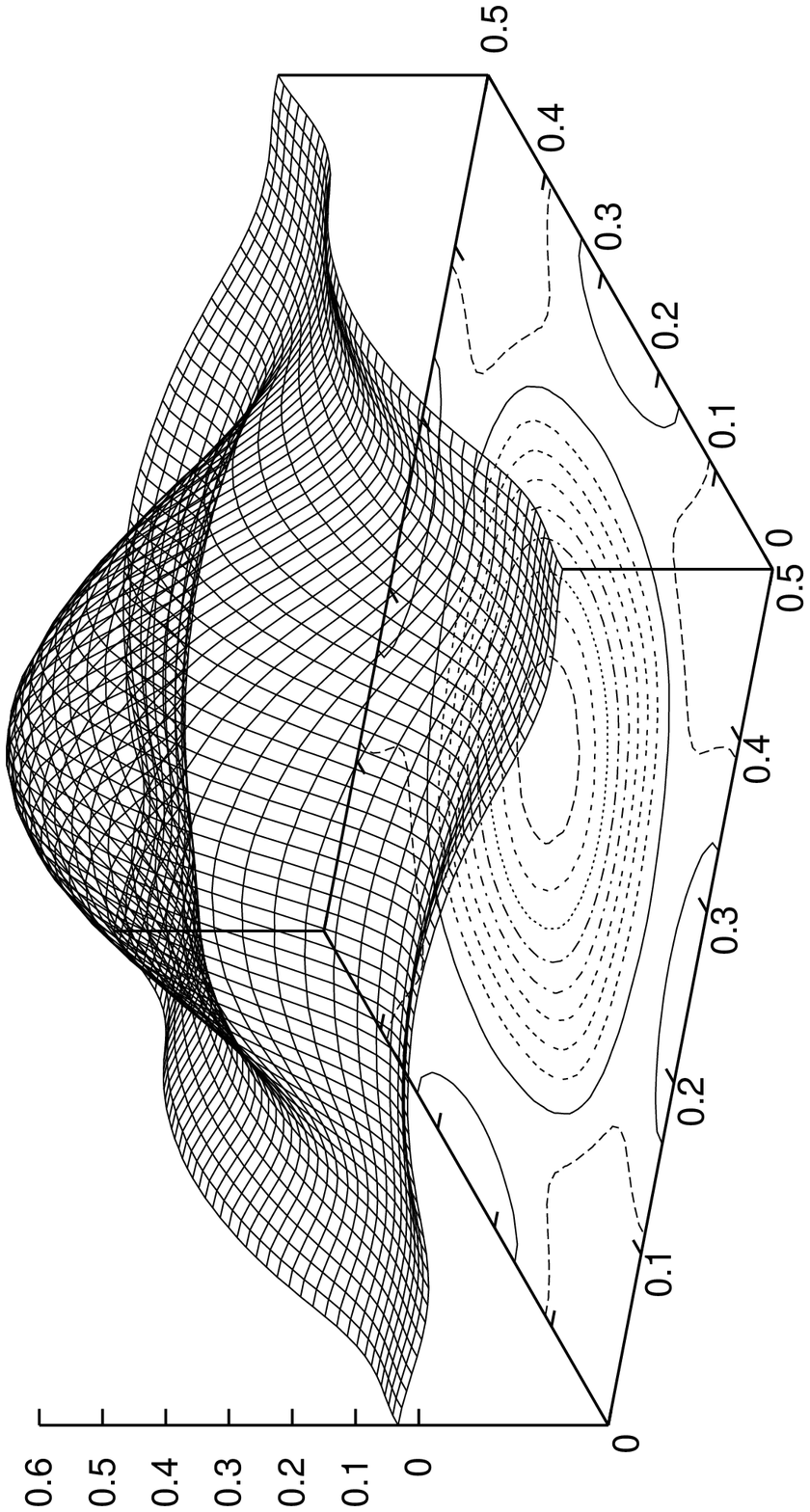}&
\hspace{-1.5cm}
\includegraphics[width=5cm,angle=-90]{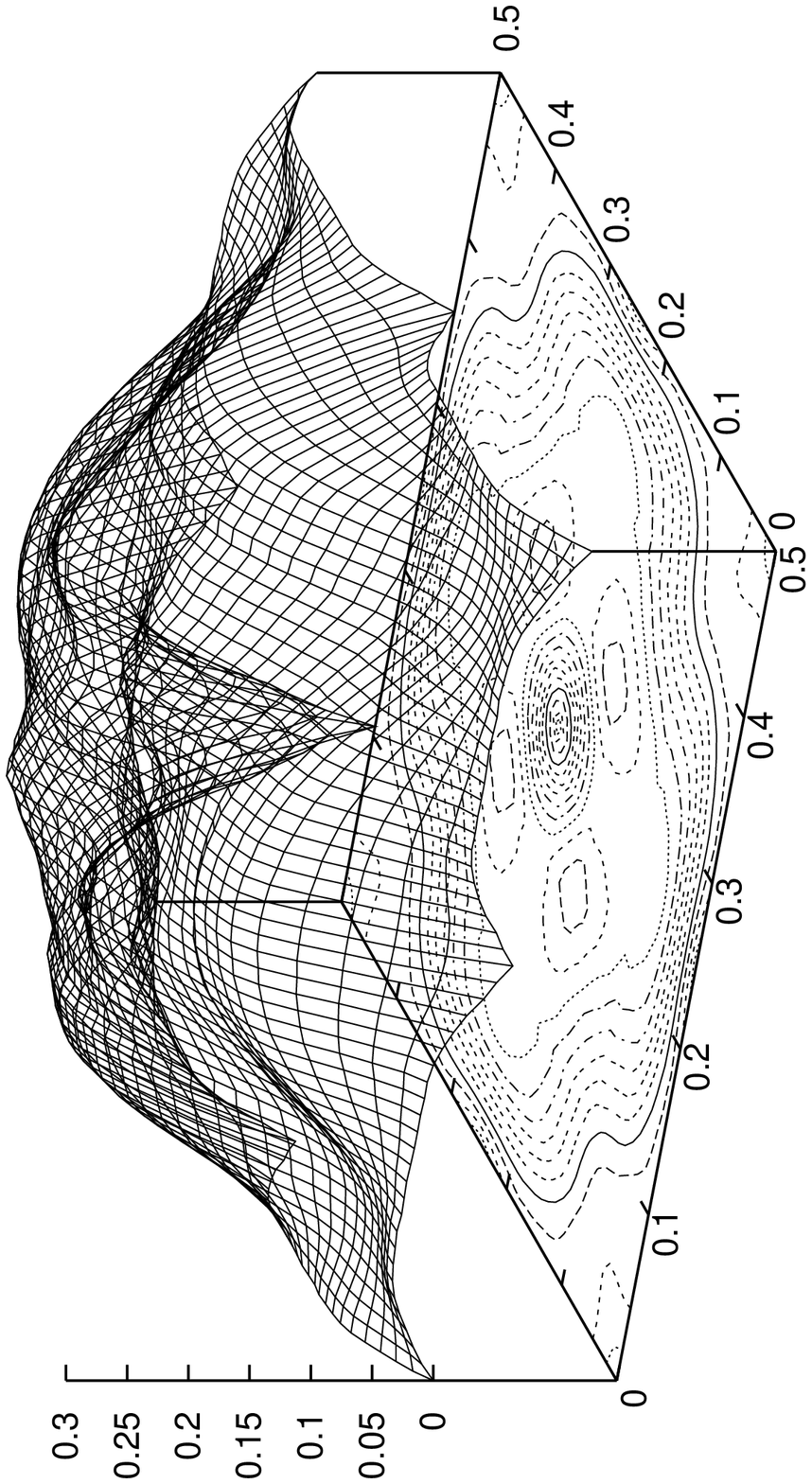}
\end{tabular}
\caption{ \label{iso_rj1} Position (left column) and norm of the current
density vector (right column) at $T=0.1 \sec$ for (resp. from the top)
 $\eps=0, 0.001, 0.01$ and $0.1$ with $\alpha=10^{-10}$.}
\end{center}
\end{figure}

\begin{figure}[!htp!]
\begin{center}
\includegraphics[width=6cm,angle=-90]{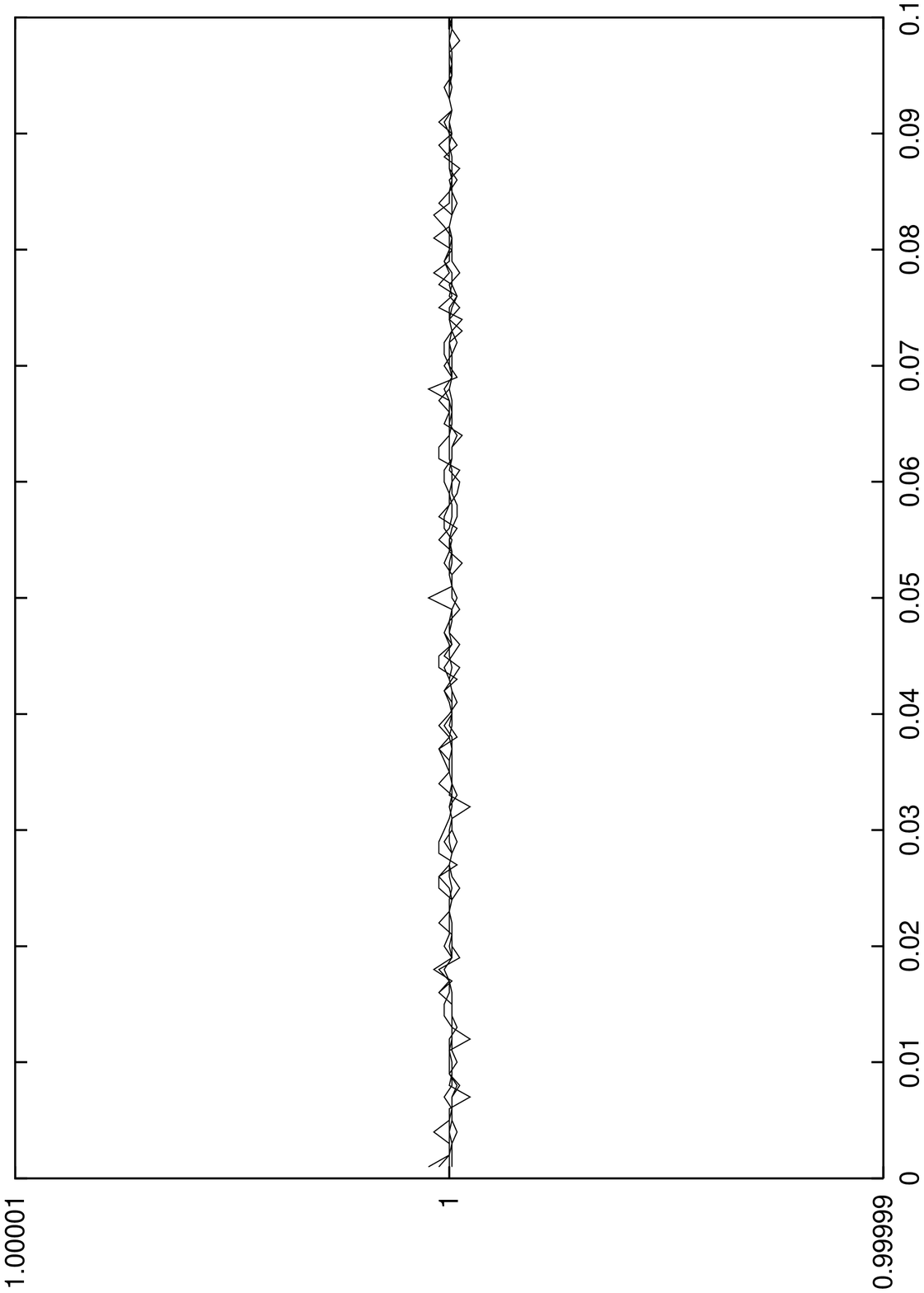}
\includegraphics[width=6cm,angle=-90]{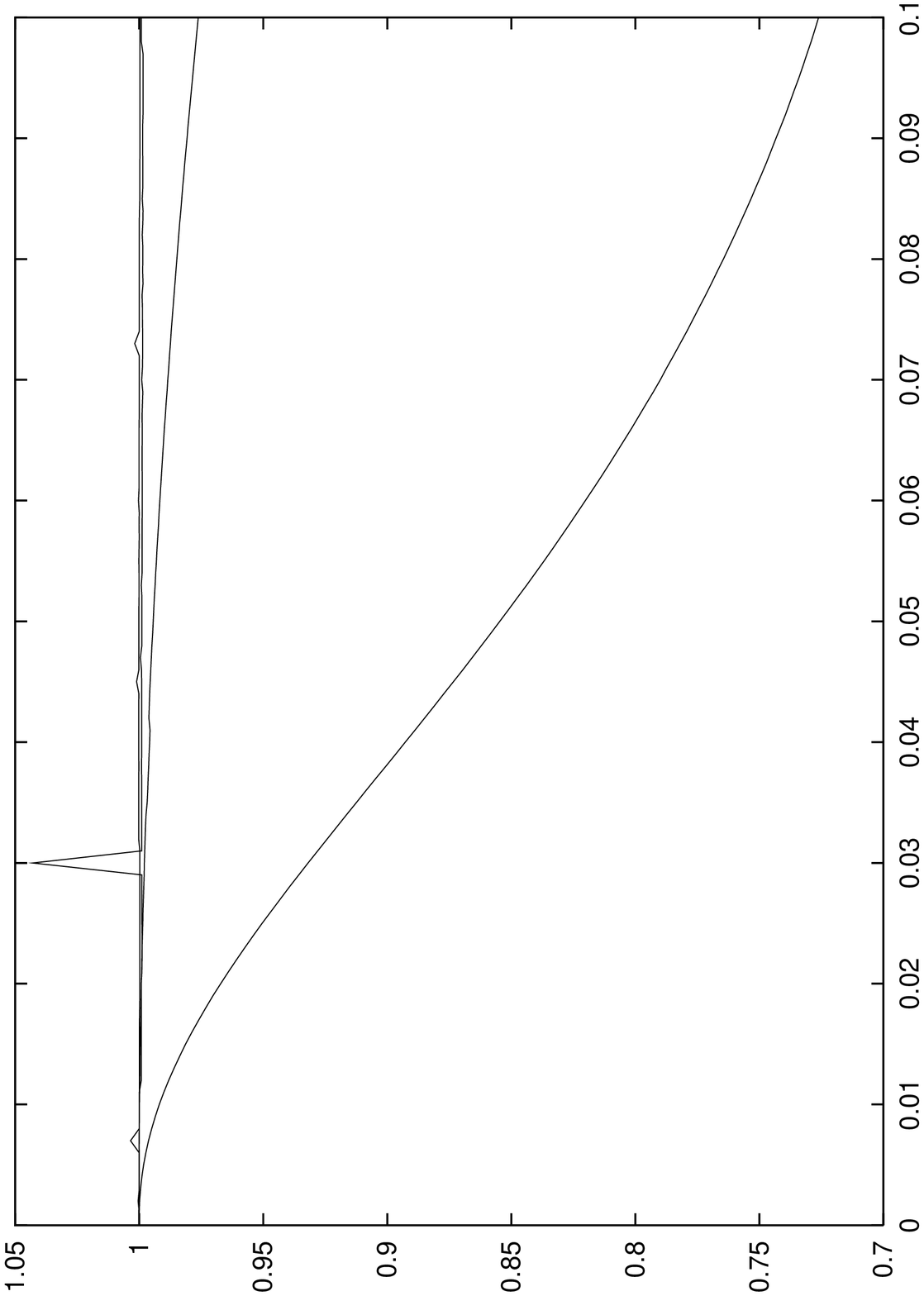}
\includegraphics[width=6cm,angle=-90]{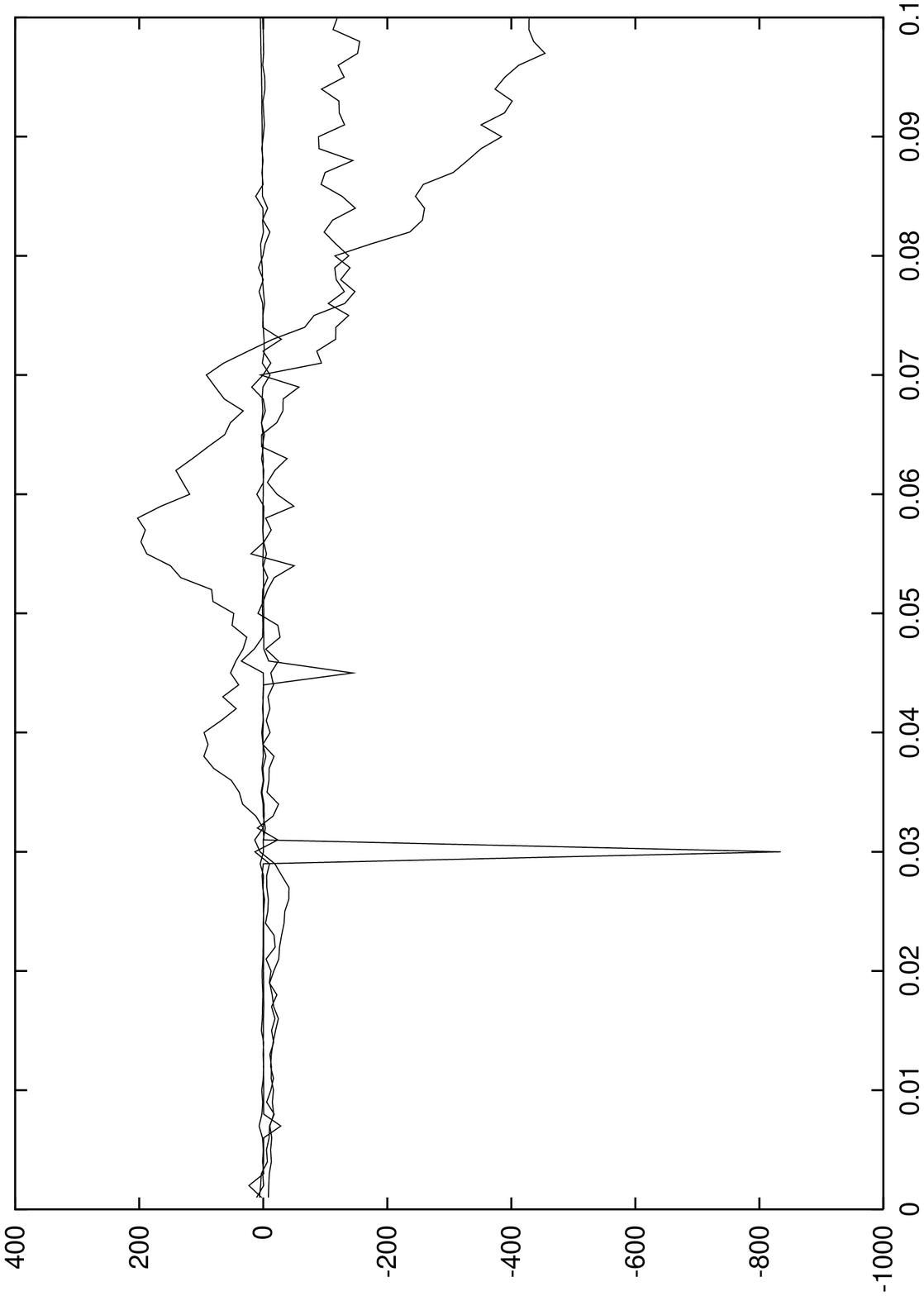}
\end{center}
\caption{ \label{res_roej1} Without projection. Evolution  in time
  ($\sec$) of (resp. from the top) the constraints on the position
  density, energy and sum of both components of the current density
  for  $\eps=0, 0.001, 0.01$ and $0.1$ for an initial condition with
  $\alpha=10^{-10}$. For the energy, deviation increases with
  $\eps$. Larger oscillations appear with higher $\eps$. Large values
  are due to the fact that initial value of the current is nearly zero.}
\end{figure}

Figures~\ref{res_roej1_ssproj} and  \ref{res_roej1}  show the
evolution of the constraints
with time for different values of $\eps$ without and with the
projection steps. The original scheme can
be seen being not conservative and dissipative. Of course, less
dissipative numerical schemes could be used, but this does not
remove the necessity for the projection step. Relative momentum
constraint values appear being large, but
one should keep in mind that these are in fact very close to zero.
What is most important is that mass and energy constraints cannot
be satisfied at the same time. This can also be seen in the
next case with initial current density.

An interesting indicator for the  behavior of the solver is by
checking if the following quantity
is linear in $\eps$ at a given time $T$ independent of $\eps$ (see
\eqref{eq:obsquad}):

\begin{equation}
\label{eqindic_L1}
  \|\rho^\eps_{h,T} -\rho_{h,T}\|_{L^1(\Omega_h)} +
  \|J^\eps_{h,T}-J_{h,T}\|_{L^1(\Omega_h)}.
\end{equation}
This is shown in Figure~\ref{indic_L1}   at $T=0.1 \sec$. The slope
grows with time.
%%%%%%%%%%%%%%%%%%%%%%%%%%%%%%%%%  indic L1L2 aplus=10-10

\begin{figure}[!htp!]
\begin{center}
%\begin{tabular}{c|c}
\includegraphics[width=7cm,angle=-90]{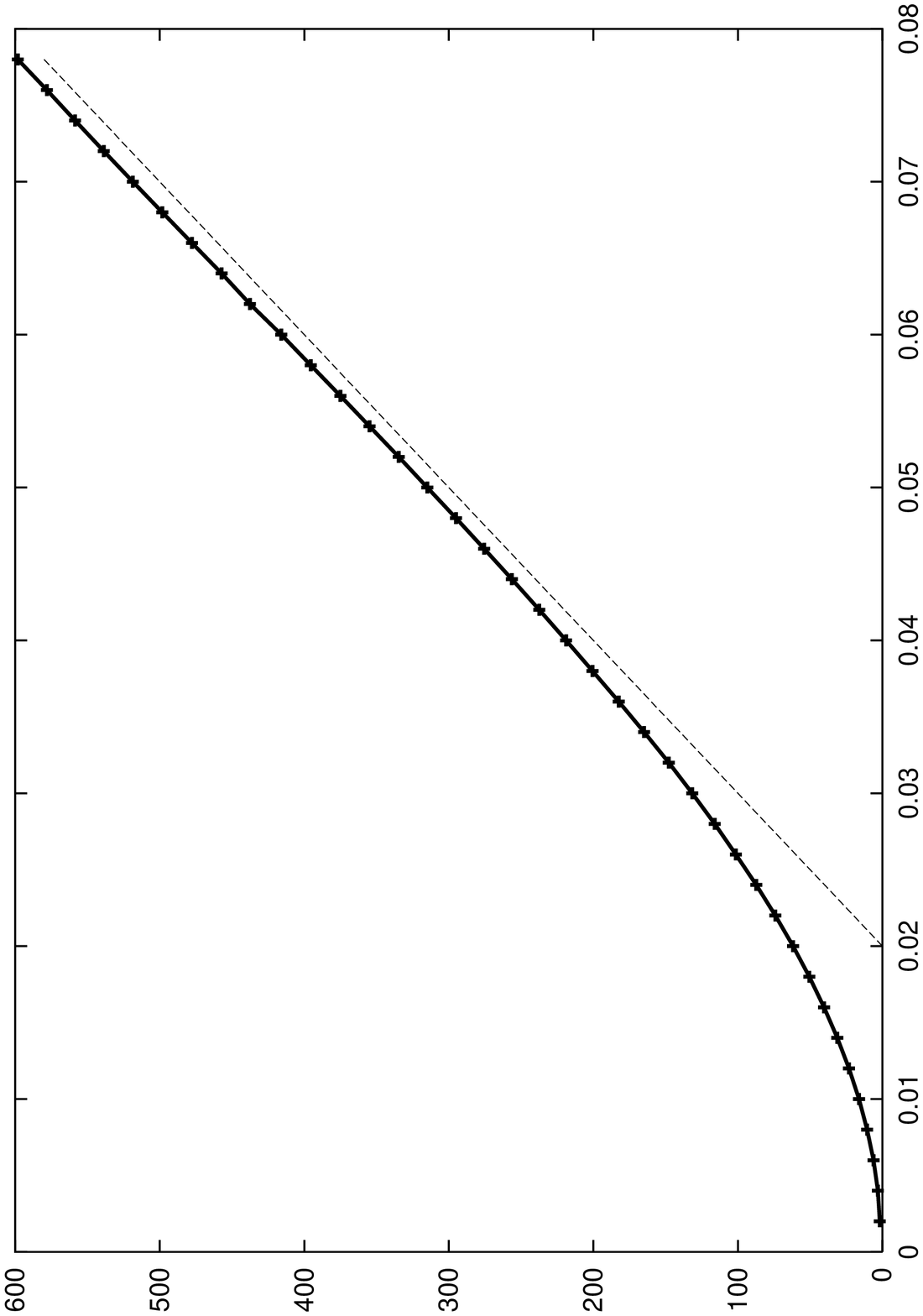}
%\end{tabular}
\end{center}
\caption{ \label{indic_L1} Linear dependency of (\ref{eqindic_L1})  at $T=0.1 \sec$ with respect to $\eps$.}
\end{figure}

\begin{figure}[!htp!]
\begin{center}
%\begin{tabular}{c|c}
\includegraphics[width=7cm,angle=-90]{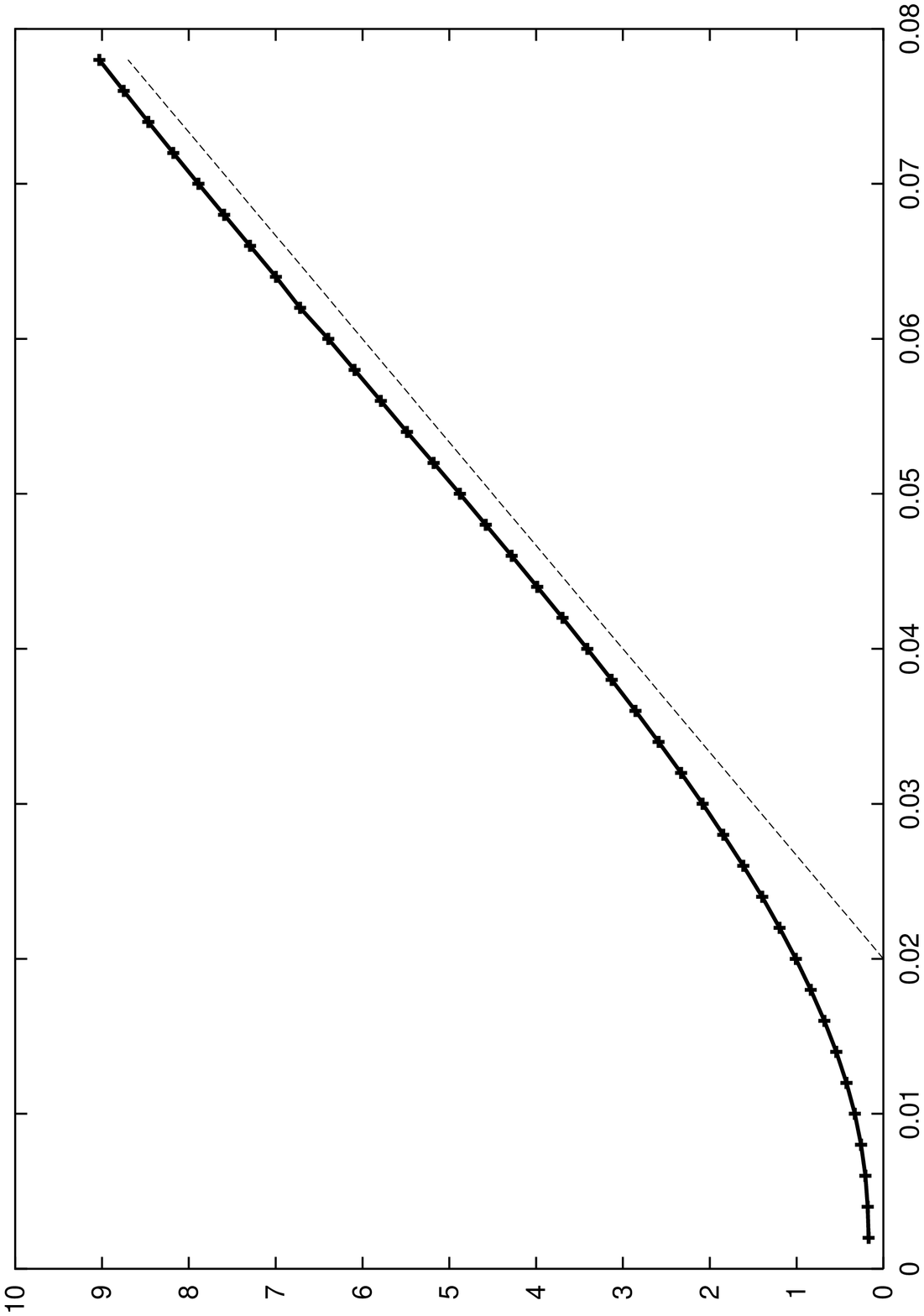}
%\end{tabular}
\end{center}
\caption{ \label{indic_L2} Linear dependency of  (\ref{eqindic_L2}) at $T=0.1 \sec$ with respect to $\eps$.}
\end{figure}

%%%%%%%%%%%%%%%%%%%%%%%%%%%%%%%

In the same way, Figure~\ref{indic_L2}   shows
 the dependency with respect to $\eps$ for the following
quantity (see Theorem~\ref{theo:grenier}):

\begin{equation}
\label{eqindic_L2}
  \|a^\eps_{h,T}-a_{h,T}\|_{L^2(\Omega_h)} +  \|v^\eps_{h,T}-v_{h,T}\|_{L^2(\Omega_h)}.
\end{equation}
Again, the dependency is linear for small $\eps$ at $T=0.1 \sec$.

\subsection{Non zero initial current}

This is the same case as before but with $\alpha=10^{-2}$ and
\begin{equation}
\label{fg}
\left\{
  \begin{aligned}
    f(x)&=\exp(-80((x_1-L/2)^2+(x_2-L/2)^2)) \sin(10 x_1),\\
g(x)&=\exp(-80((x_1-L/2)^2+(x_2-L/2)^2)) \cos(10 x_1).
  \end{aligned}
     \right.
\end{equation}
Figure~\ref{iso_rj02} shows the initial position and current
densities. Figure~\ref{iso_rj2} shows the solution at $T=0.1 \sec$
for $\eps=0, 0.001, 0.01$ and $0.1$. Figure~\ref{res_roej2}  shows
the evolution of the position density, energy and current density
  constraints with time for different values of $\eps$ when
  only  mass through $I_1$ and the current density through vector
  $I_3$ have been maintained. 

\begin{figure}[!htp!]
\begin{center}
\begin{tabular}{ll}
\includegraphics[width=5cm,angle=-90]{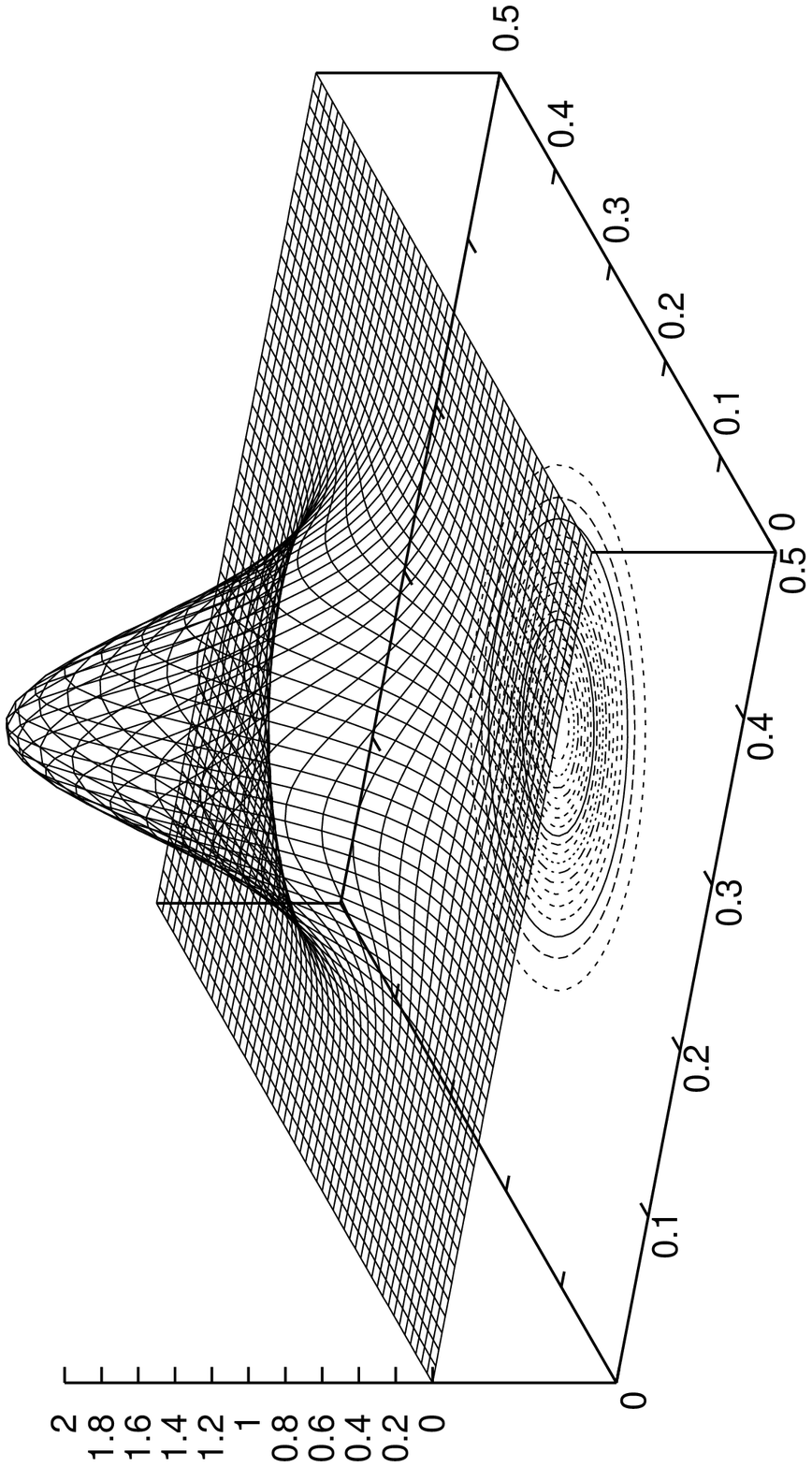}&
\hspace{-1.5cm}
\includegraphics[width=5cm,angle=-90]{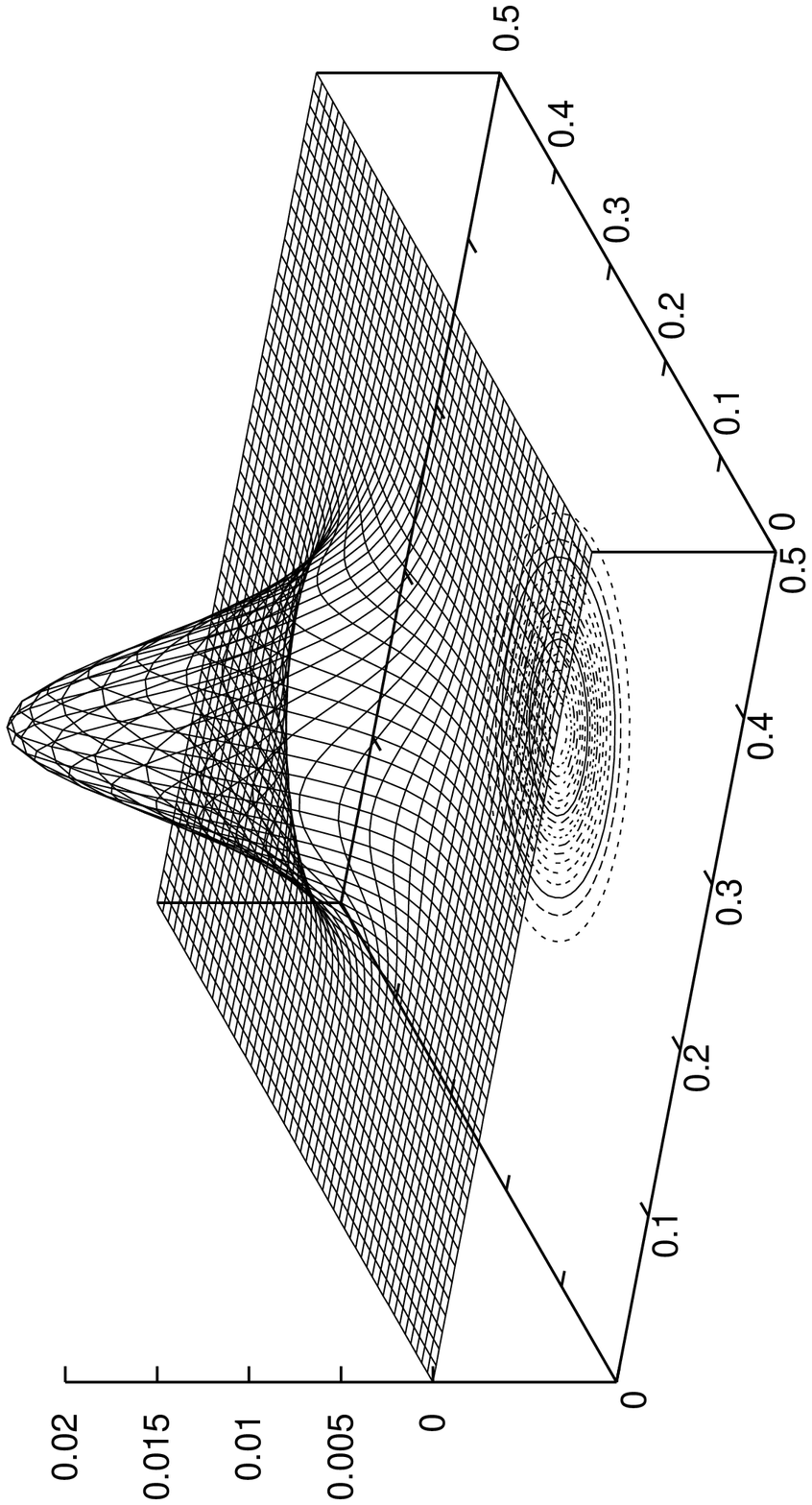}
\end{tabular}
\caption{ \label{iso_rj02} Initial position (left) and norm of the current density
vector (right)  with $\alpha=0.01$.}
\end{center}
\end{figure}

\begin{figure}[!htp!]
\begin{center}
\begin{tabular}{ll}
\vspace{-1.5cm}
\includegraphics[width=5cm,angle=-90]{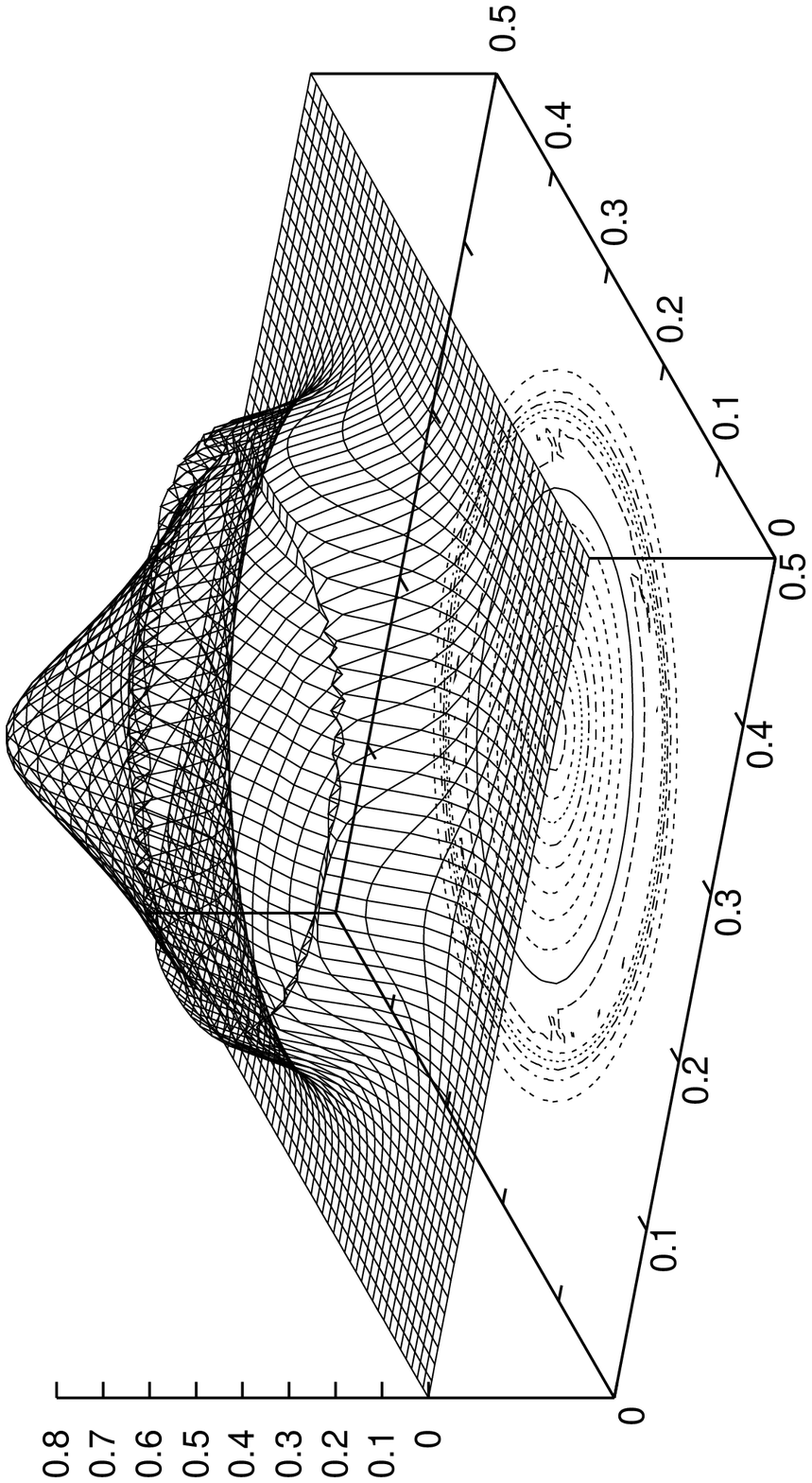}&
\hspace{-1.5cm}
\includegraphics[width=5cm,angle=-90]{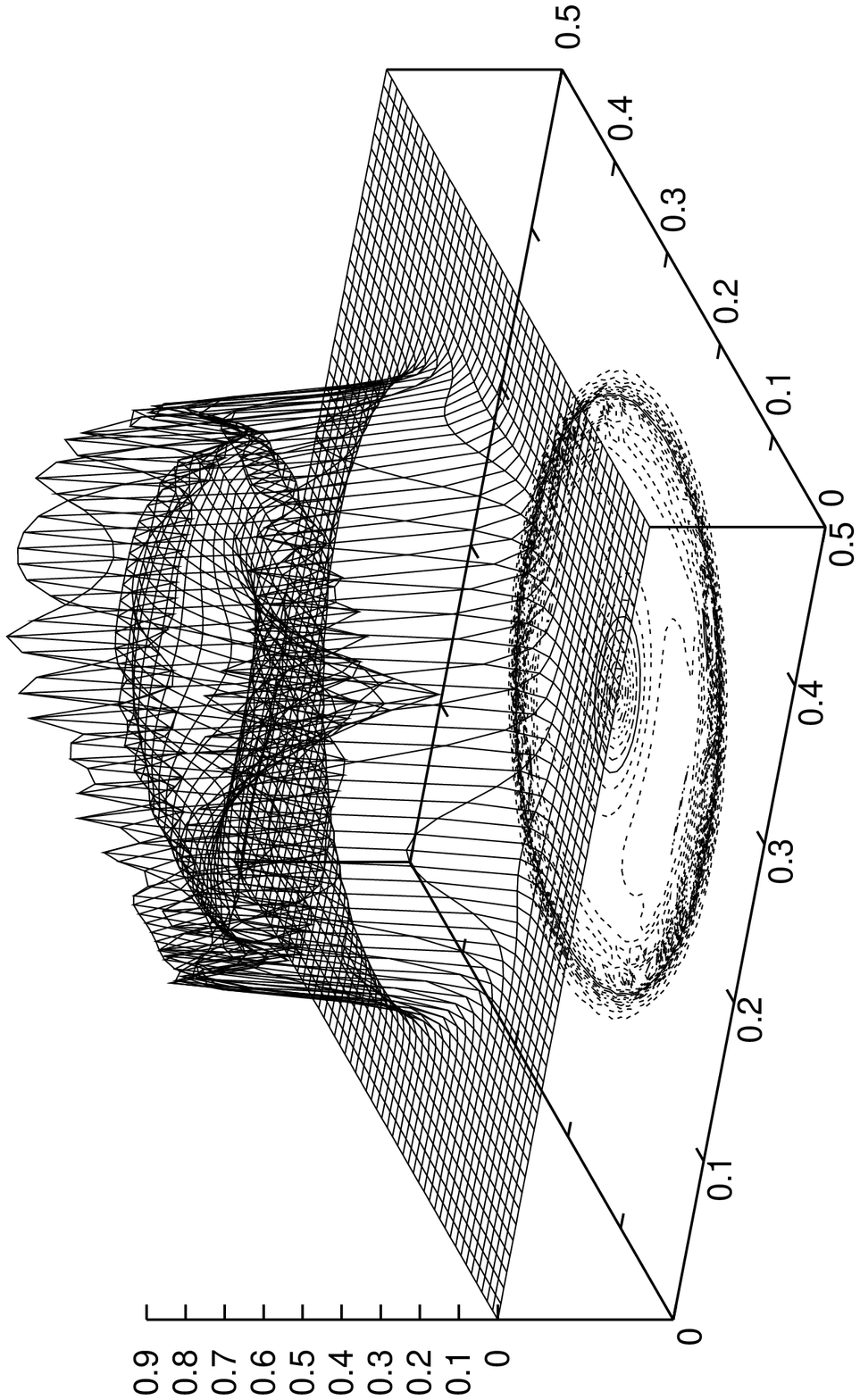}\\
\vspace{-1.5cm}
\includegraphics[width=5cm,angle=-90]{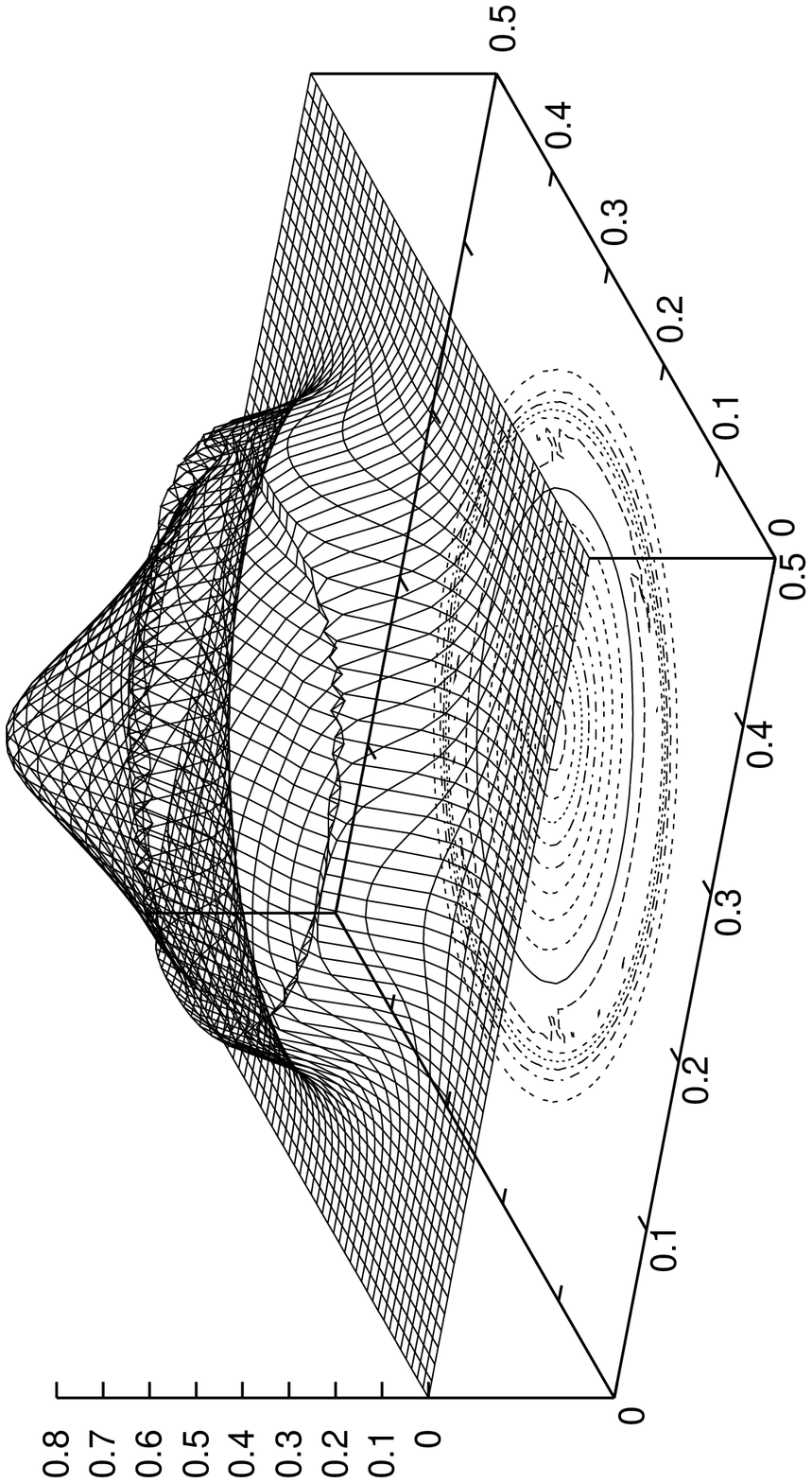}&
\hspace{-1.5cm}
\includegraphics[width=5cm,angle=-90]{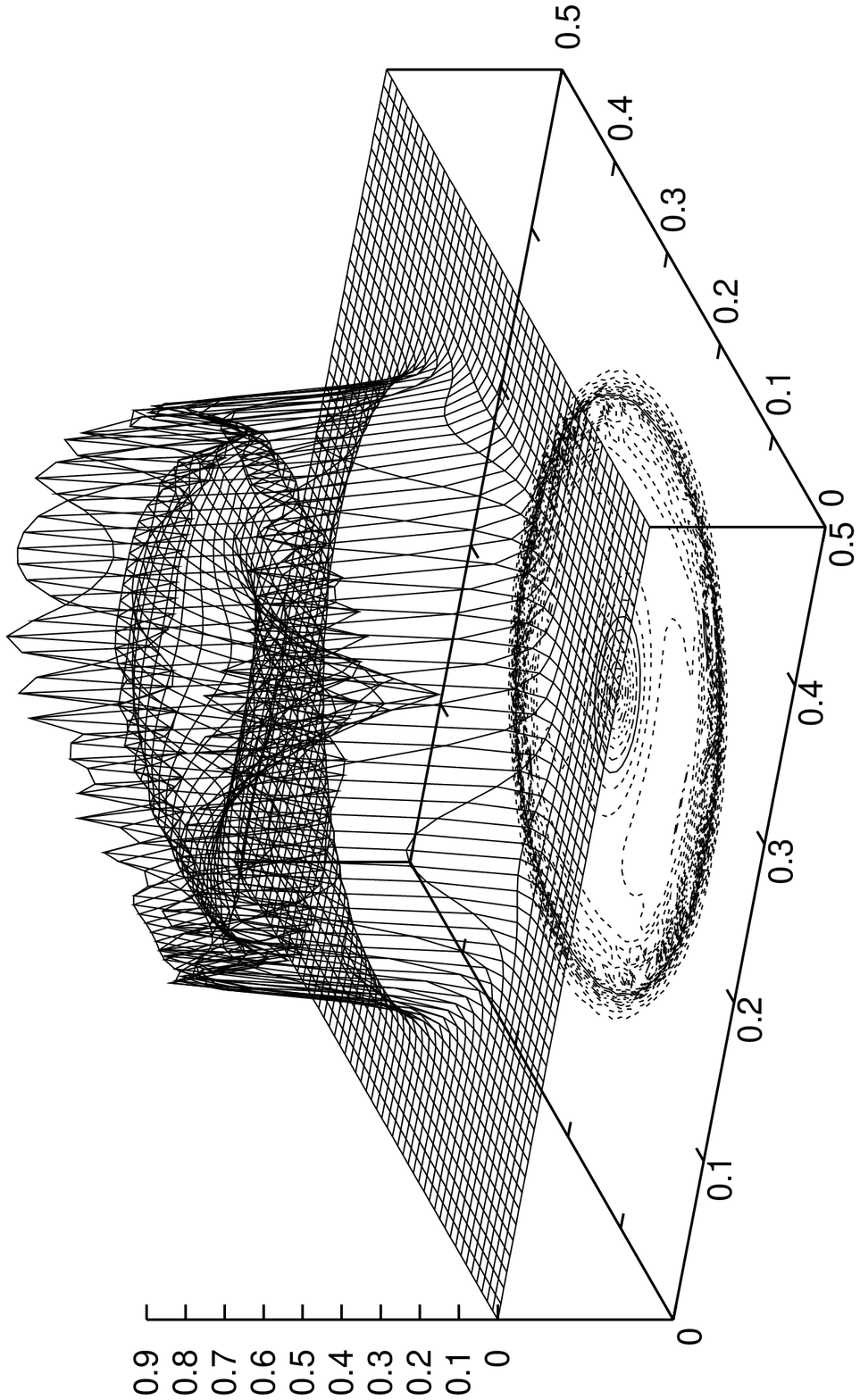}\\
\vspace{-1.5cm}
\includegraphics[width=5cm,angle=-90]{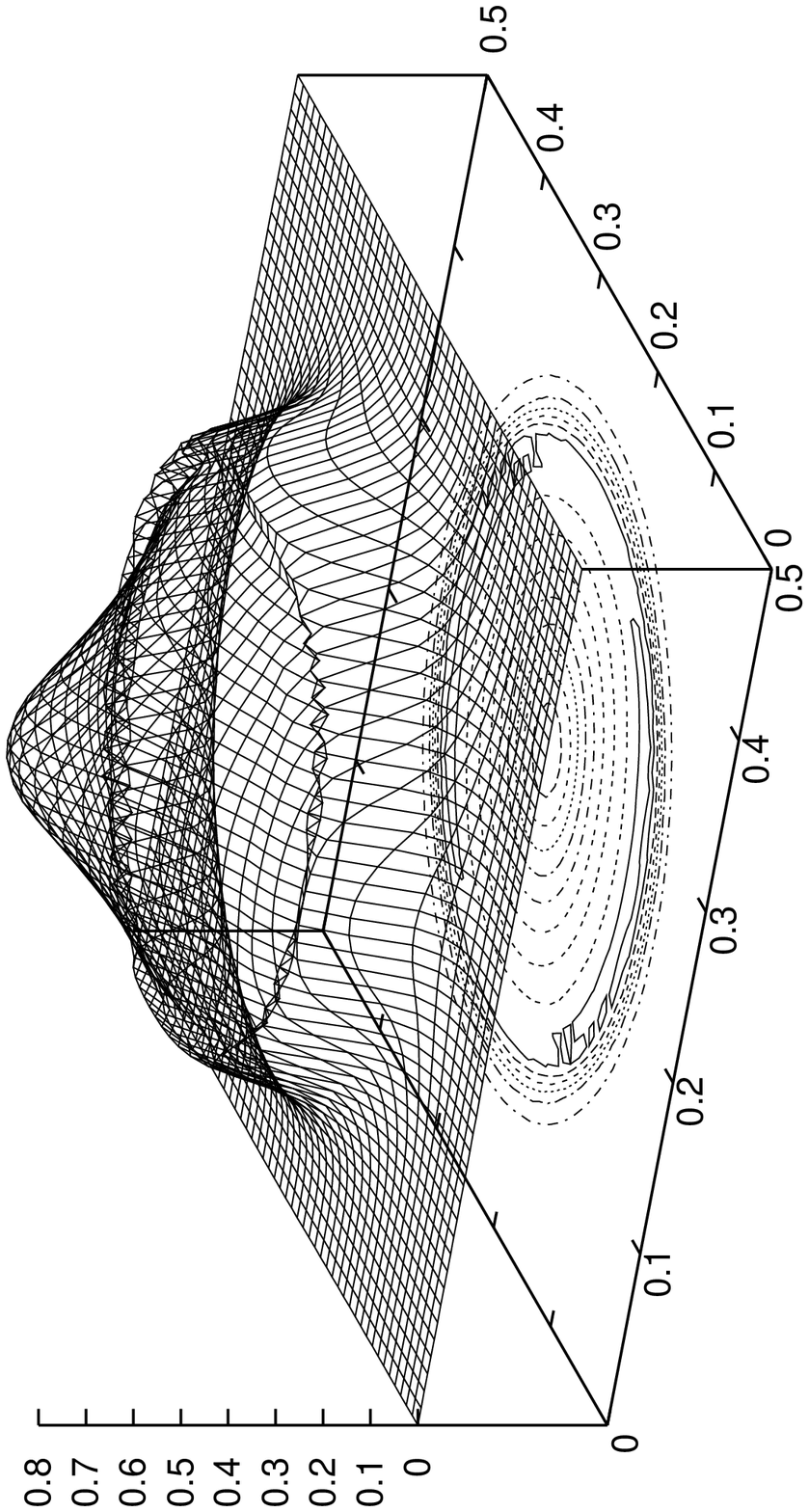}&
\hspace{-1.5cm}
\includegraphics[width=5cm,angle=-90]{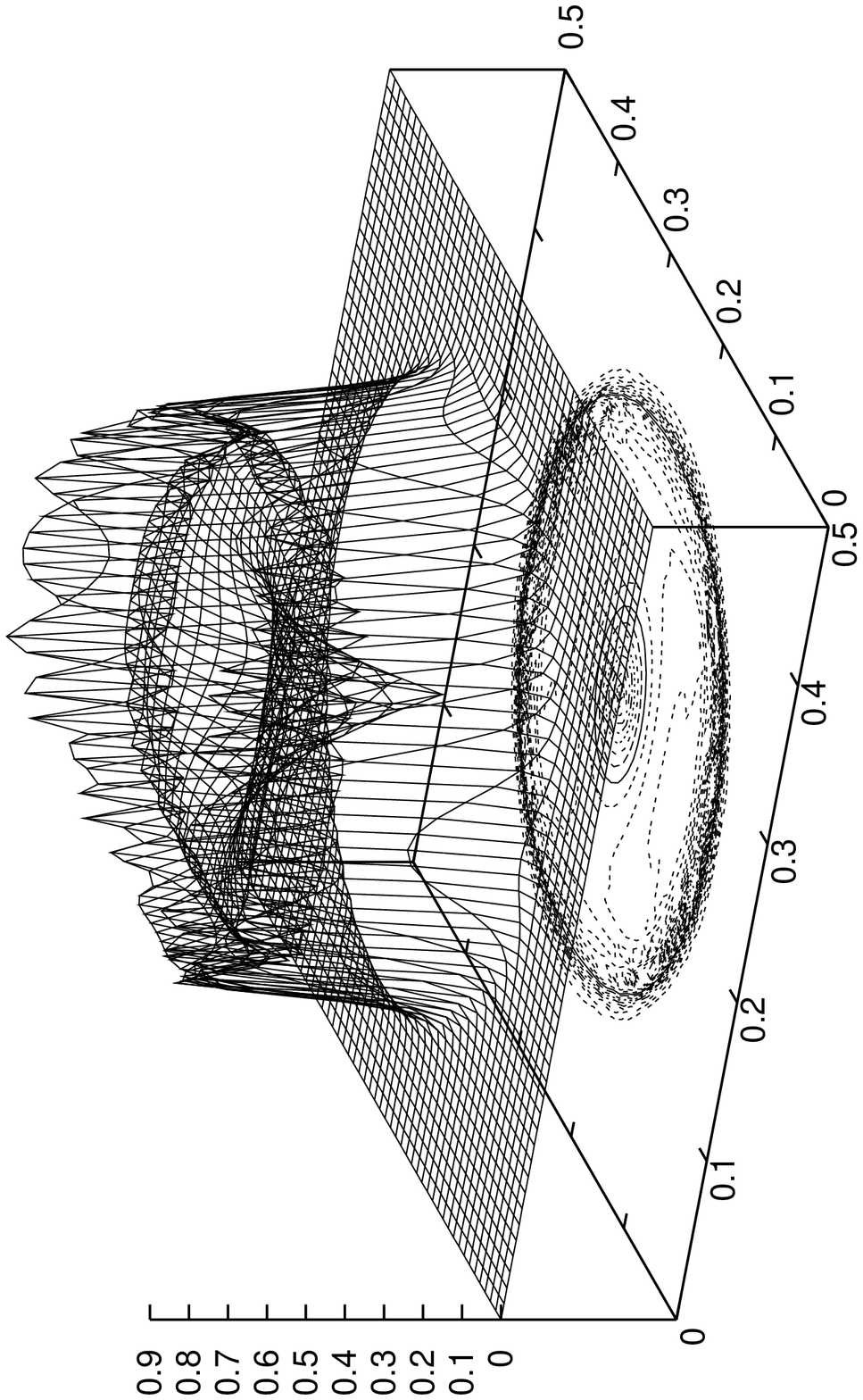}\\
\includegraphics[width=5cm,angle=-90]{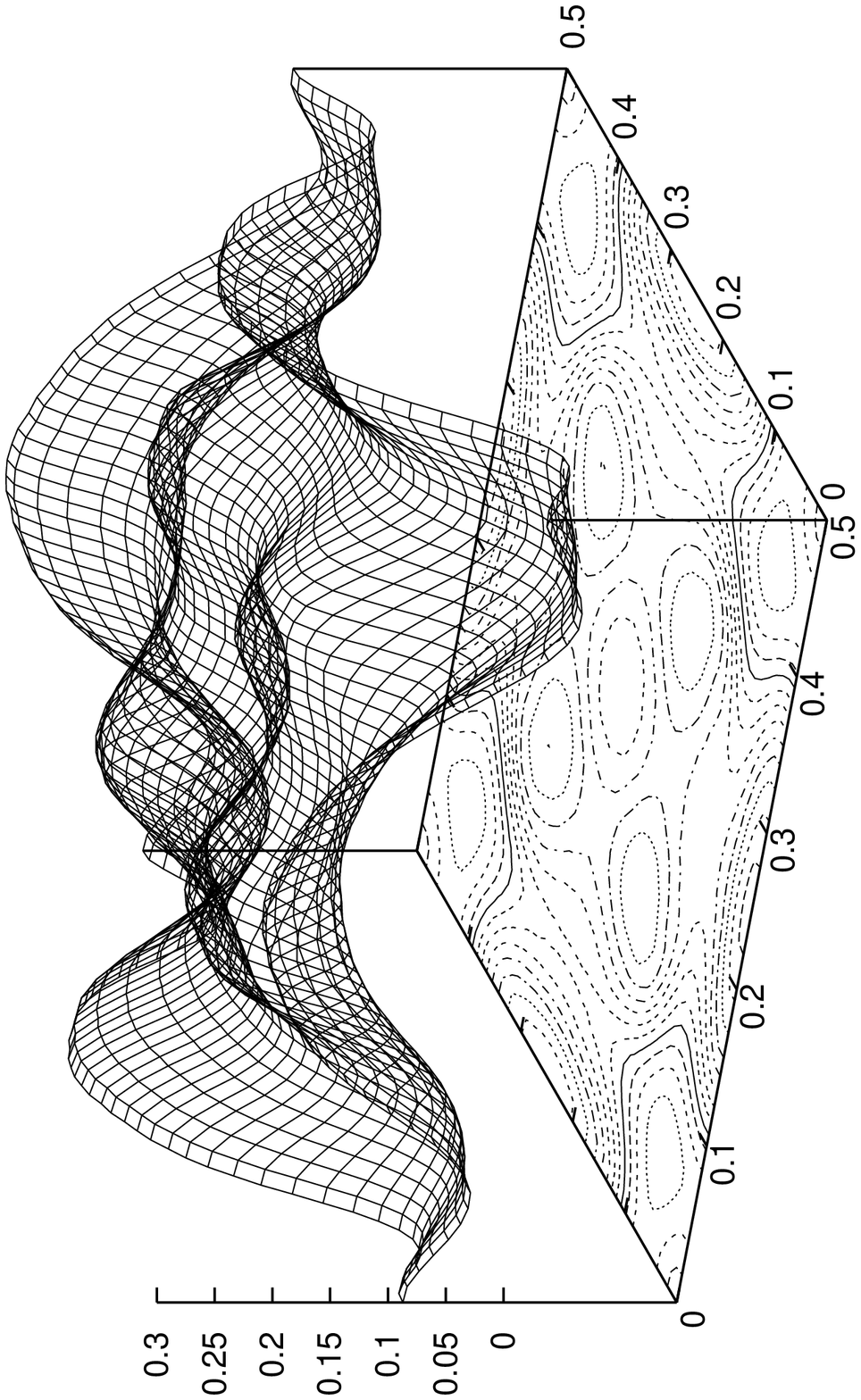}&
\hspace{-1.5cm}
\includegraphics[width=5cm,angle=-90]{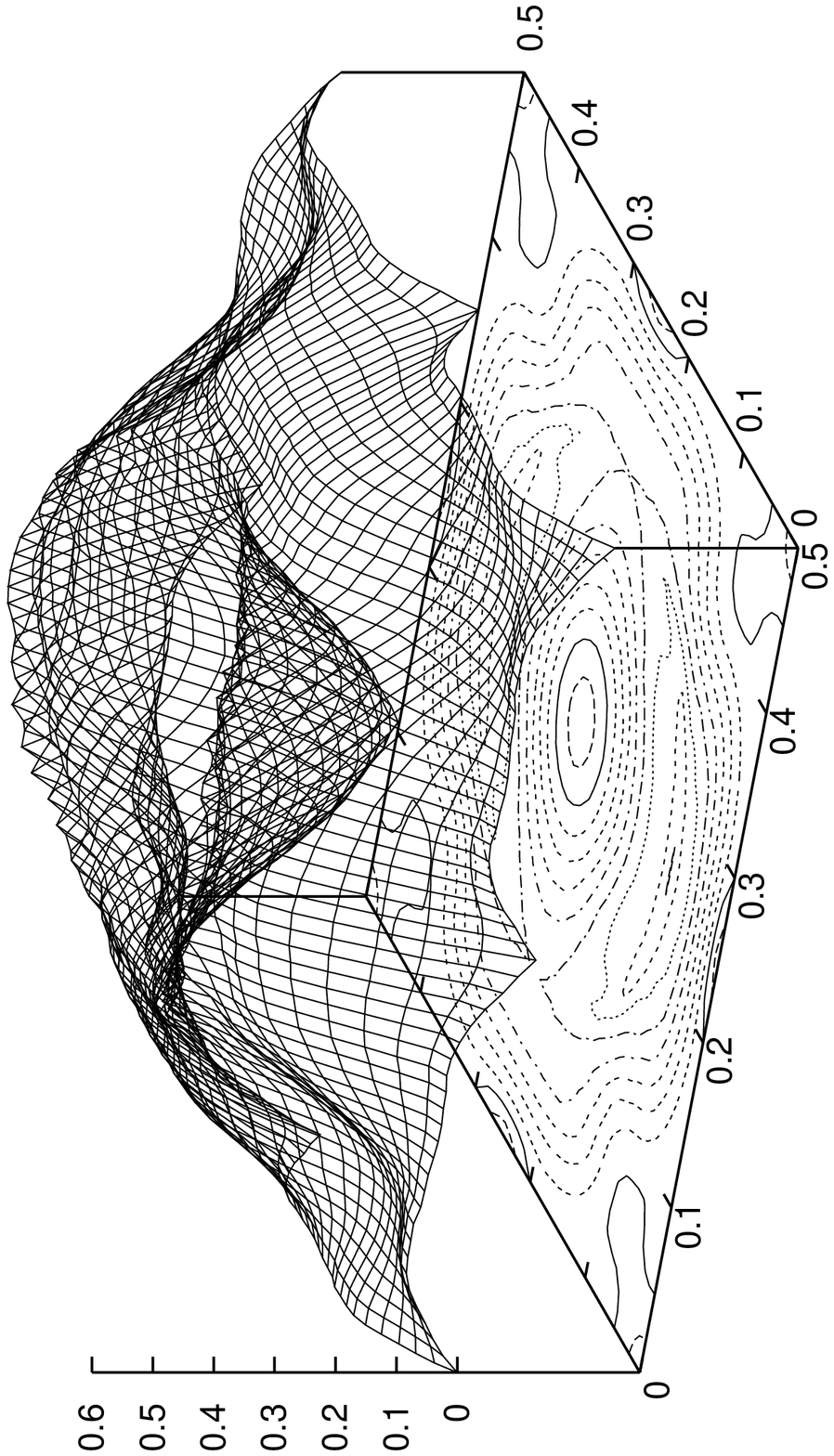}
\end{tabular}
\caption{ \label{iso_rj2} Position (left column) and current density (right column)
at $T=0.1 \sec$ for (resp. from the top) $\eps=0, 0.001, 0.01$ and $0.1$ with $\alpha=0.01$.}
\end{center}
\end{figure}

\begin{figure}[!htp!]
\begin{center}
\includegraphics[width=6cm,angle=-90]{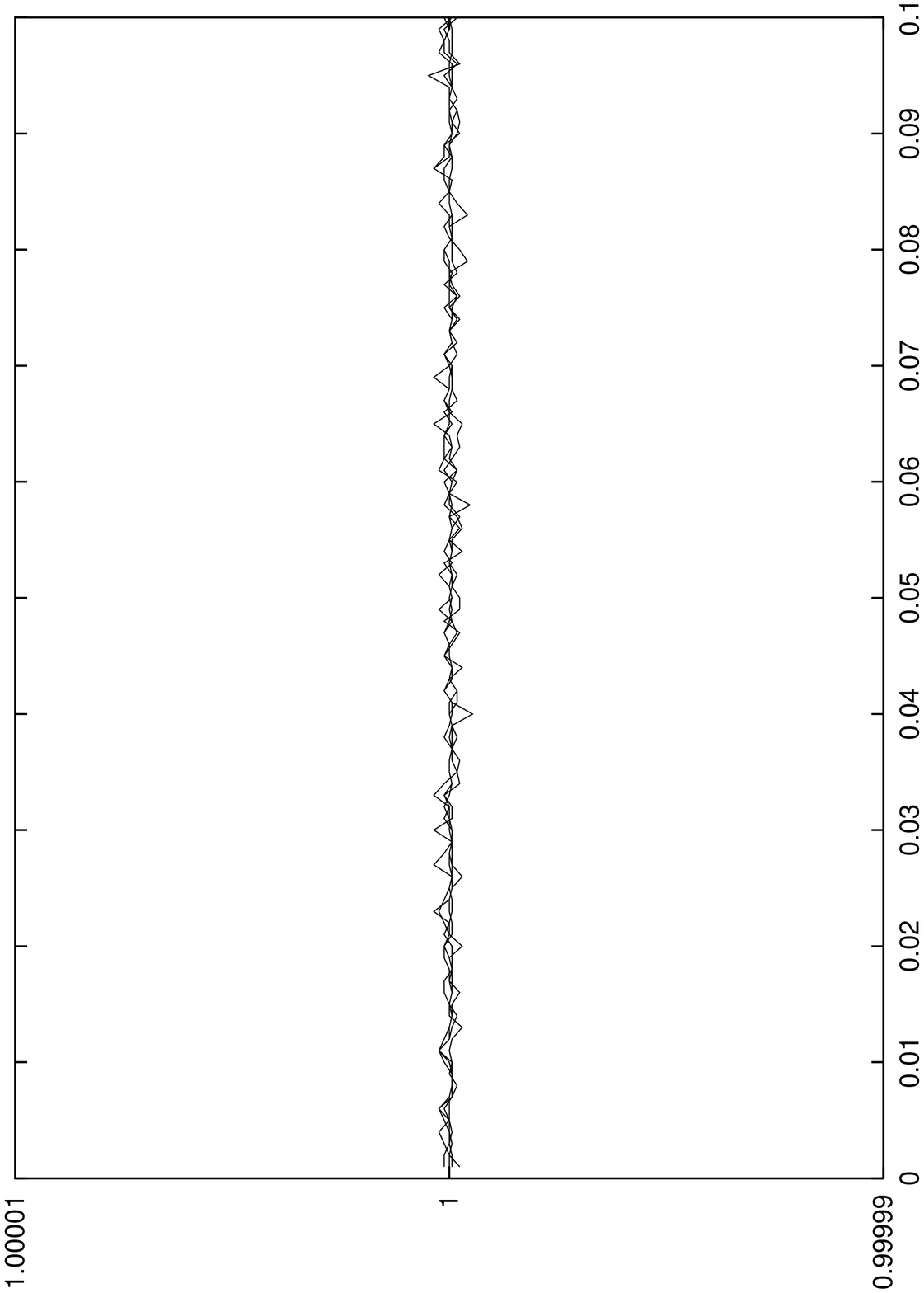}
\includegraphics[width=6cm,angle=-90]{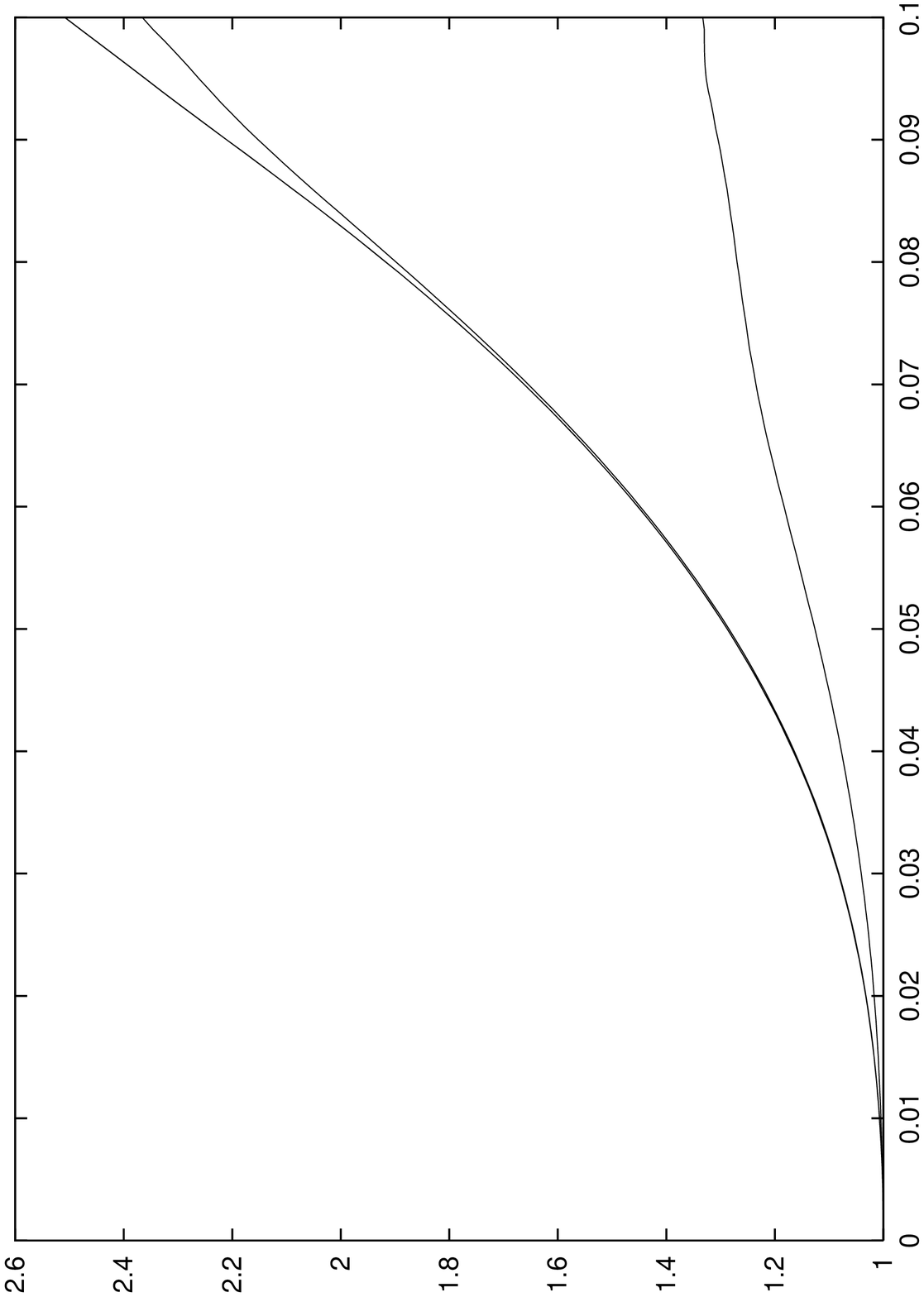}
\includegraphics[width=6cm,angle=-90]{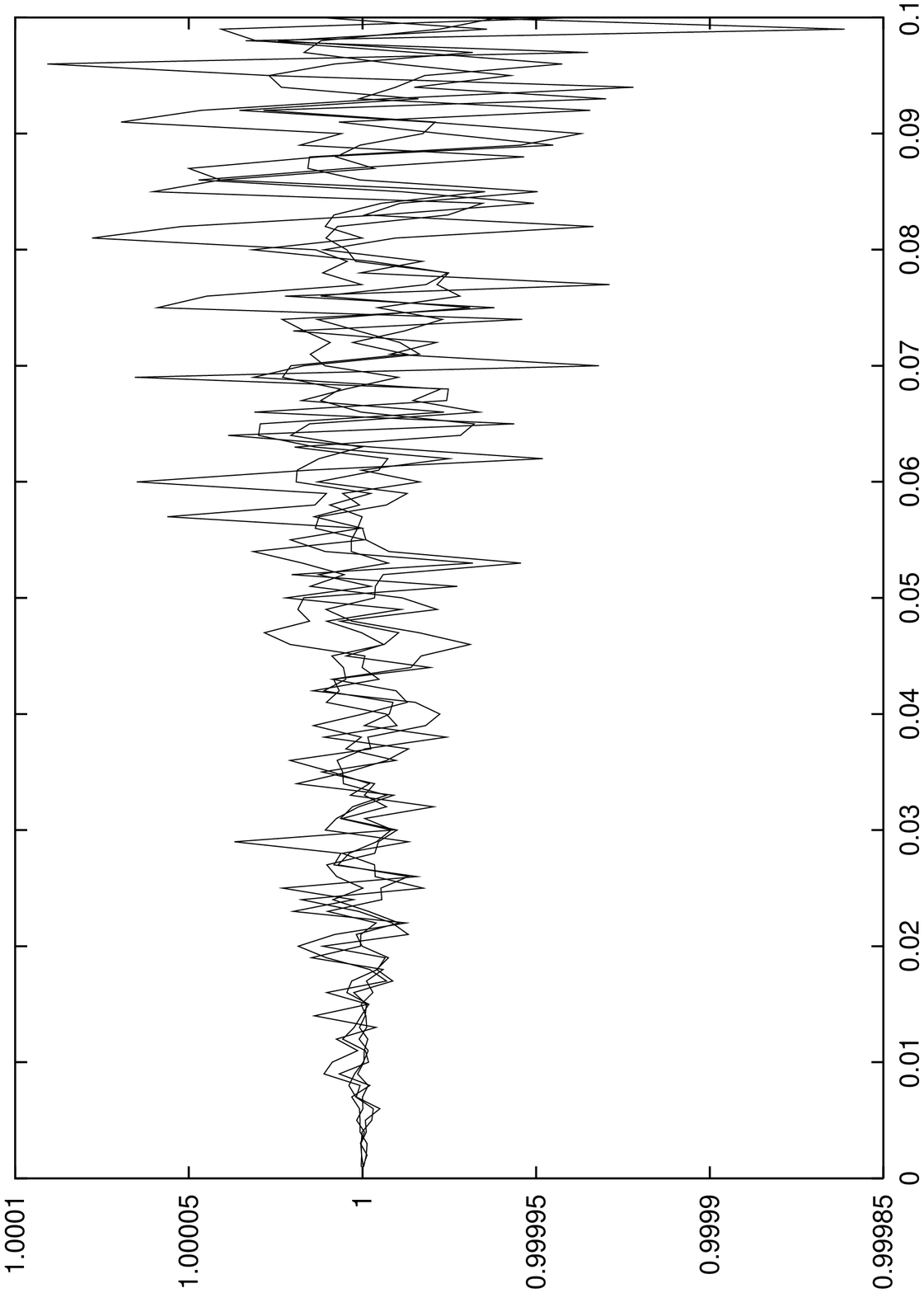}
\end{center}
\caption{ \label{res_roej2} Evolution  in time ($\sec$) of the
  constraints on the
position density, energy and sum of both components of the current
density (resp. from the top) for  $\eps=0, 0.001, 0.01$ and $0.1$
for an initial condition with $\alpha=0.01$.}
\end{figure}

Figures~\ref{indic_L1_2} and \ref{indic_L2_2} show that
indicators \eqref{eqindic_L1} and \eqref{eqindic_L2} are still linear
with respect to $\eps$ but on a shorter range close to zero.

%%%%%%%%%%%%%%%%%%%%%%%%%%%%%%%%%  indic L1L2 aplus=10-2

\begin{figure}[!htp!]
\begin{center}
%\begin{tabular}{c|c}
\includegraphics[width=7cm,angle=-90]{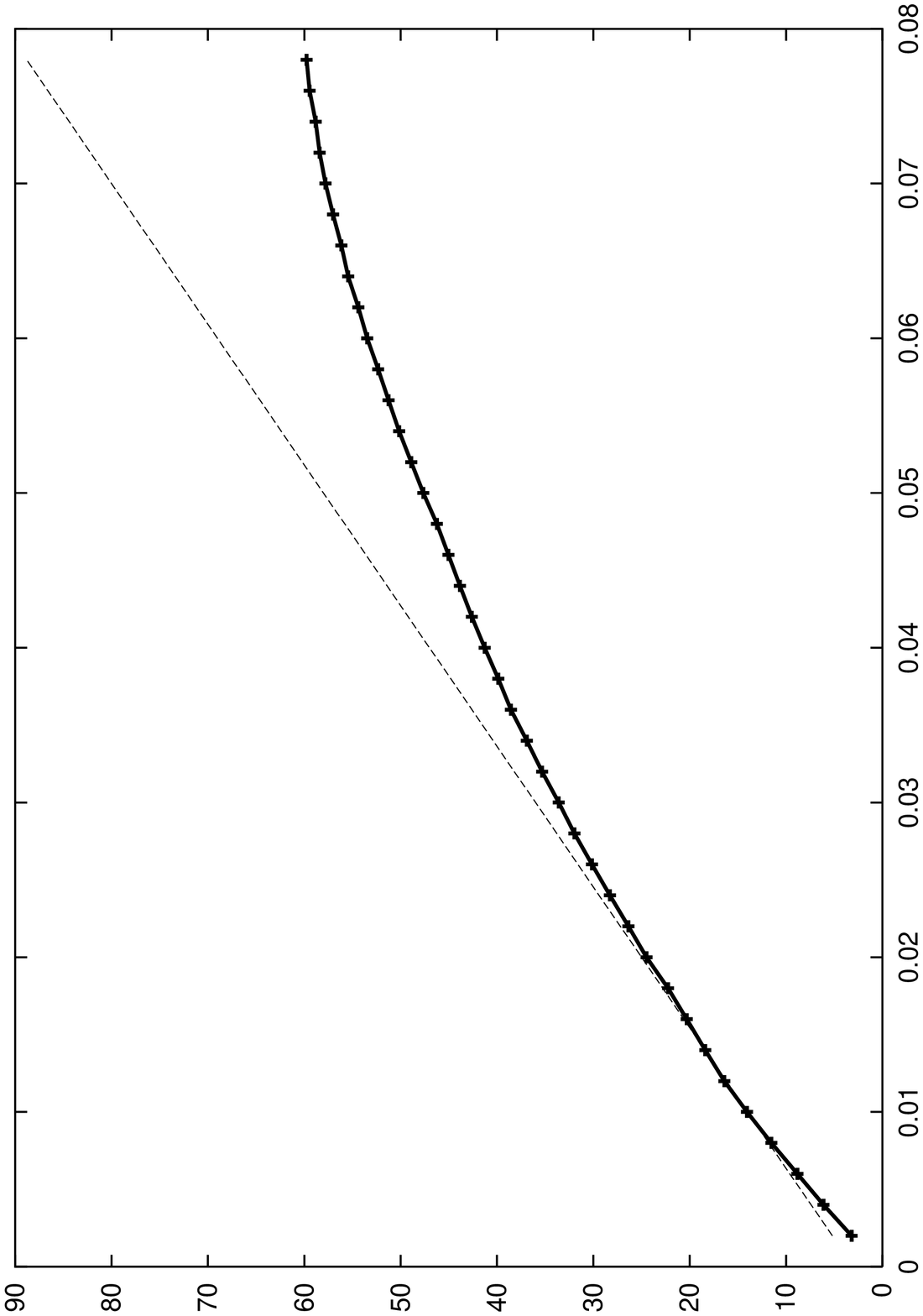}
%\end{tabular}
\end{center}
\caption{ \label{indic_L1_2} Linear dependency of (\ref{eqindic_L1})  at $T=0.1 \sec$ with respect to $\eps$.
}
\end{figure}

\begin{figure}[!htp!]
\begin{center}
%\begin{tabular}{c|c}
\includegraphics[width=7cm,angle=-90]{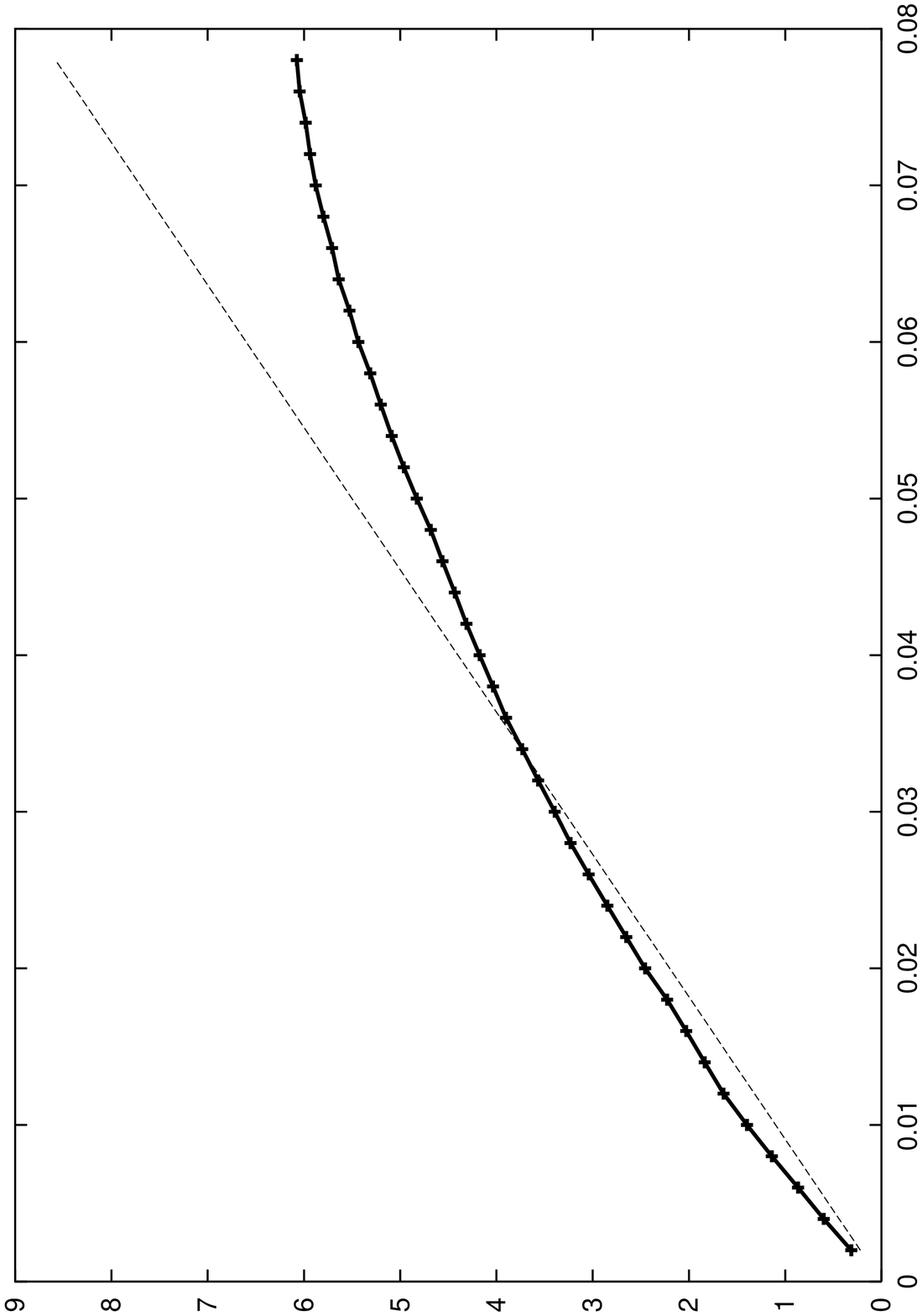}
%\end{tabular}
\end{center}
\caption{ \label{indic_L2_2} Linear dependency of  (\ref{eqindic_L2}) at $T=0.1 \sec$ with respect to $\eps$.}
\end{figure}

%%%%%%%%%%%%%%%%%%%%%%%%%%%%%%%

\subsection{$a^\eps$ changing sign}\label{sec:changing}

To introduce a changing sign initial data for $a^\eps$, we consider an
initial condition given by
$a_0(x)=(\exp(-320((x_1-L/2)^2+(x_2-L/2)^2))-\exp(-320((x_1-L/2)^2+(x_2-L/2)^2)))
(1+i)$. This initial amplitude changes signs: the set where it is zero
corresponds to the \emph{presence of vacuum} in the hydrodynamical point of
view. 
The initial current is as for the previous case with $f$ and $g$ given
in \eqref{fg}. 
Figure~\ref{iso_rj03} shows the initial position and current
densities. Figure~\ref{iso_rj3} shows the solution at $T=0.05 \sec$
for $\eps=0, 0.001, 0.01$ and $0.1$. Figure~\ref{res_roej3}  shows
the evolution of the position density, energy and current density
  constraints with time for different values of $\eps$ when
  only  mass through $I_1$ and the current density through vector
  $I_3$ have been maintained. 

%%%%%%%%%%%%%%%%%%%%%%%%%%%%%%% case 3

\begin{figure}[!htp!]
\begin{center}
\begin{tabular}{ll}
\includegraphics[width=5cm,angle=-90]{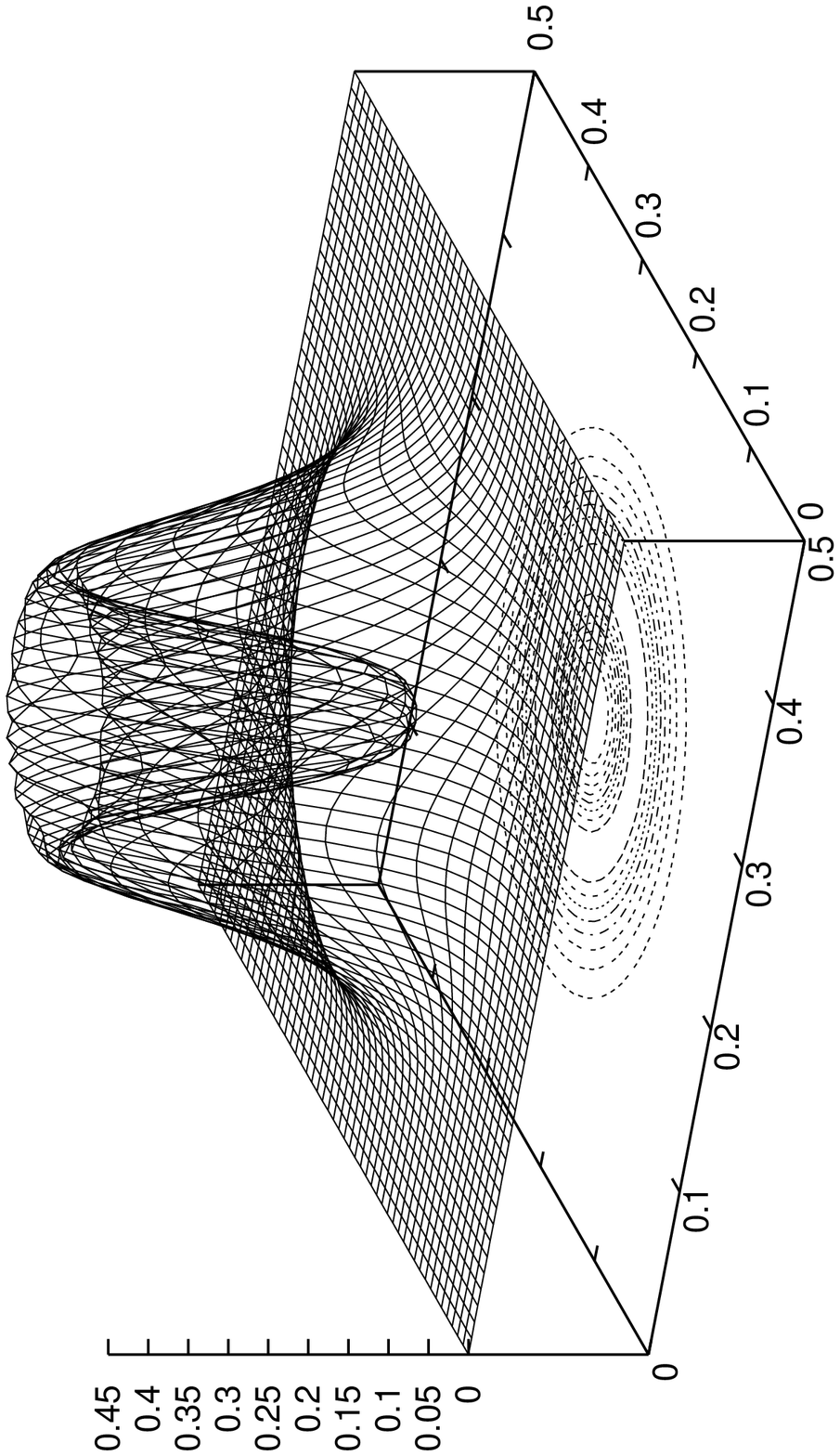}&
\hspace{-1.5cm}
\includegraphics[width=5cm,angle=-90]{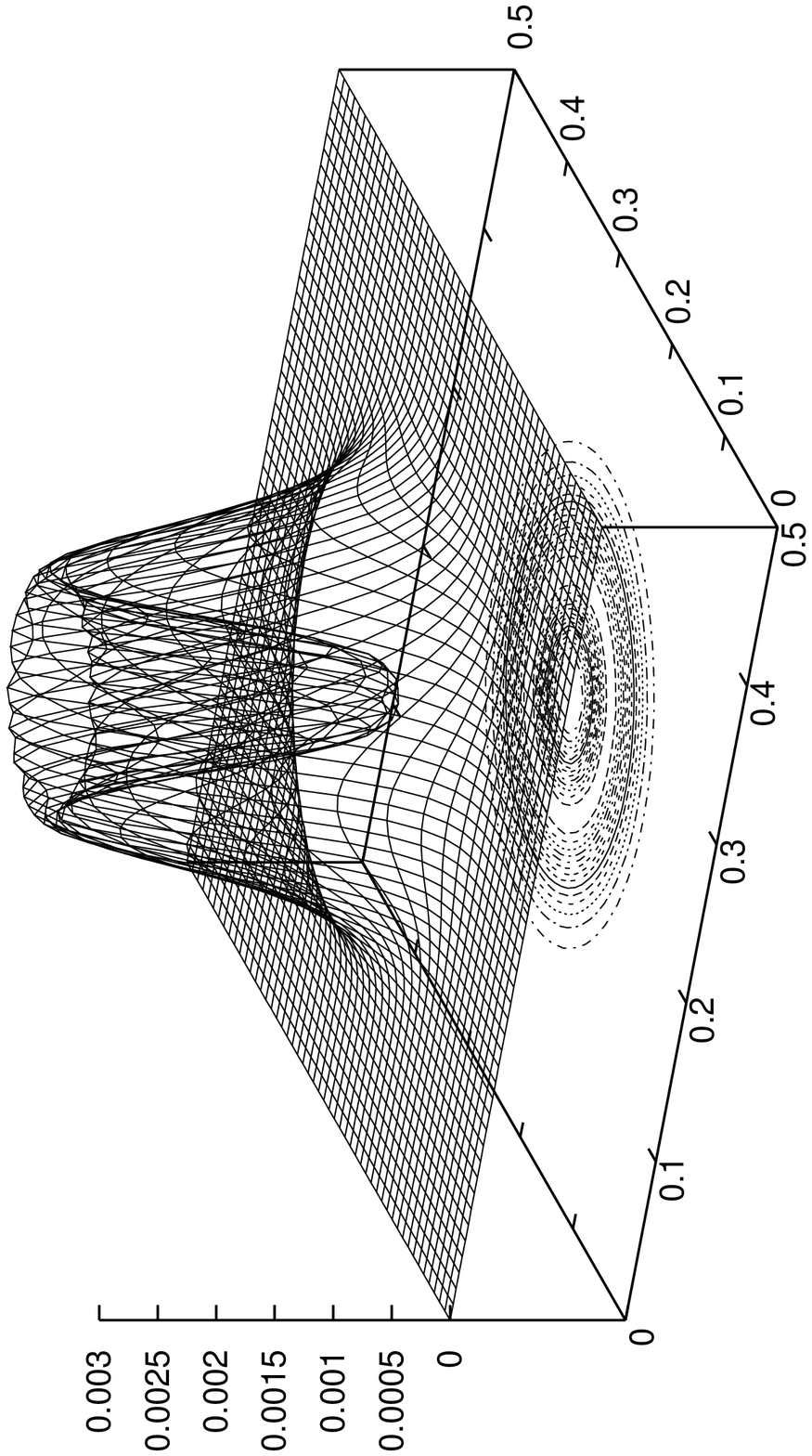}
\end{tabular}
\caption{ \label{iso_rj03} Initial position (left) and norm of the current density
vector (right)  with varying sign initial $a^\eps$.}
\end{center}
\end{figure}

\begin{figure}[!htp!]
\begin{center}
\begin{tabular}{ll}
\vspace{-1.5cm}
\includegraphics[width=5cm,angle=-90]{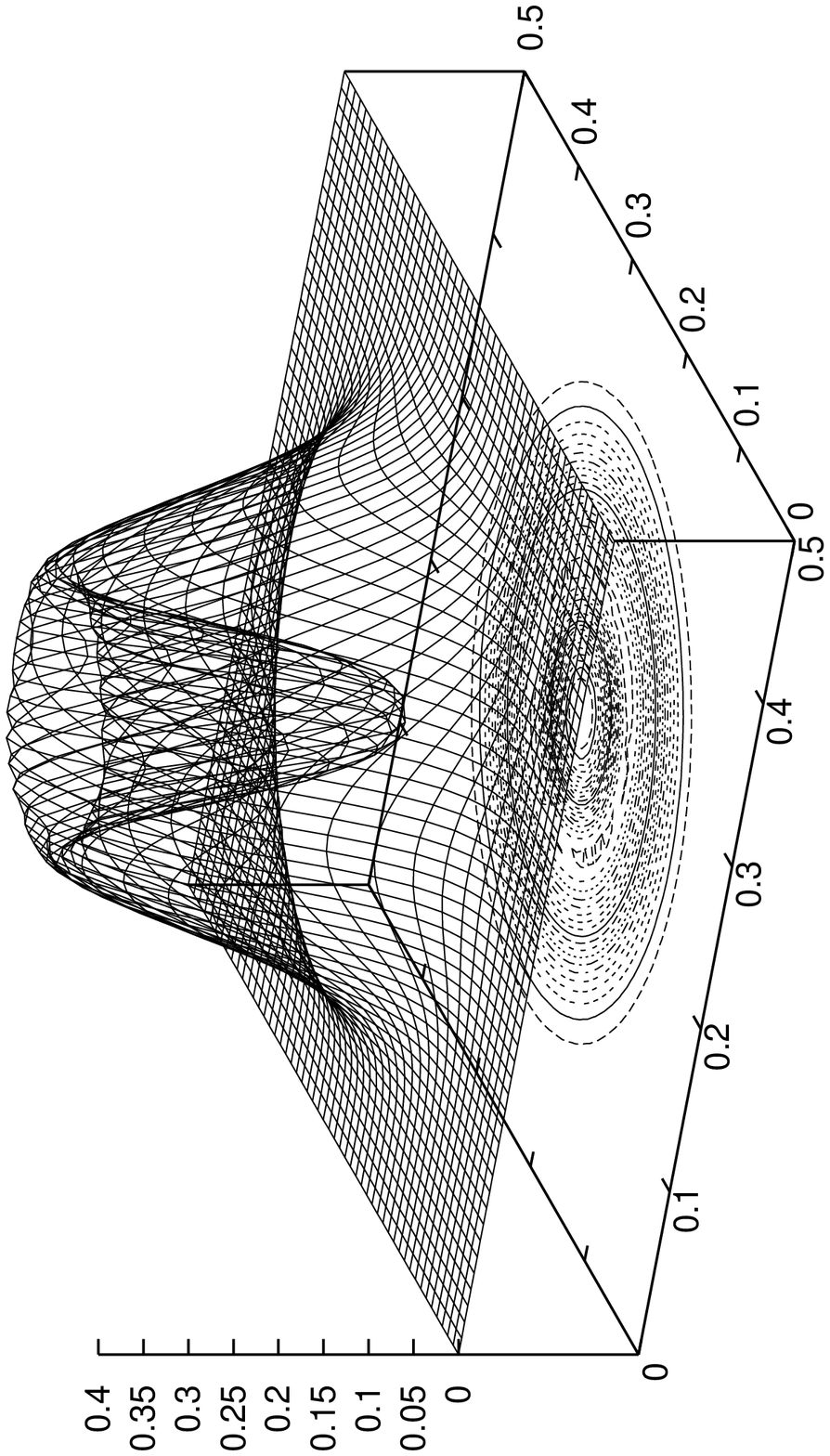}&
\hspace{-1.5cm}
\includegraphics[width=5cm,angle=-90]{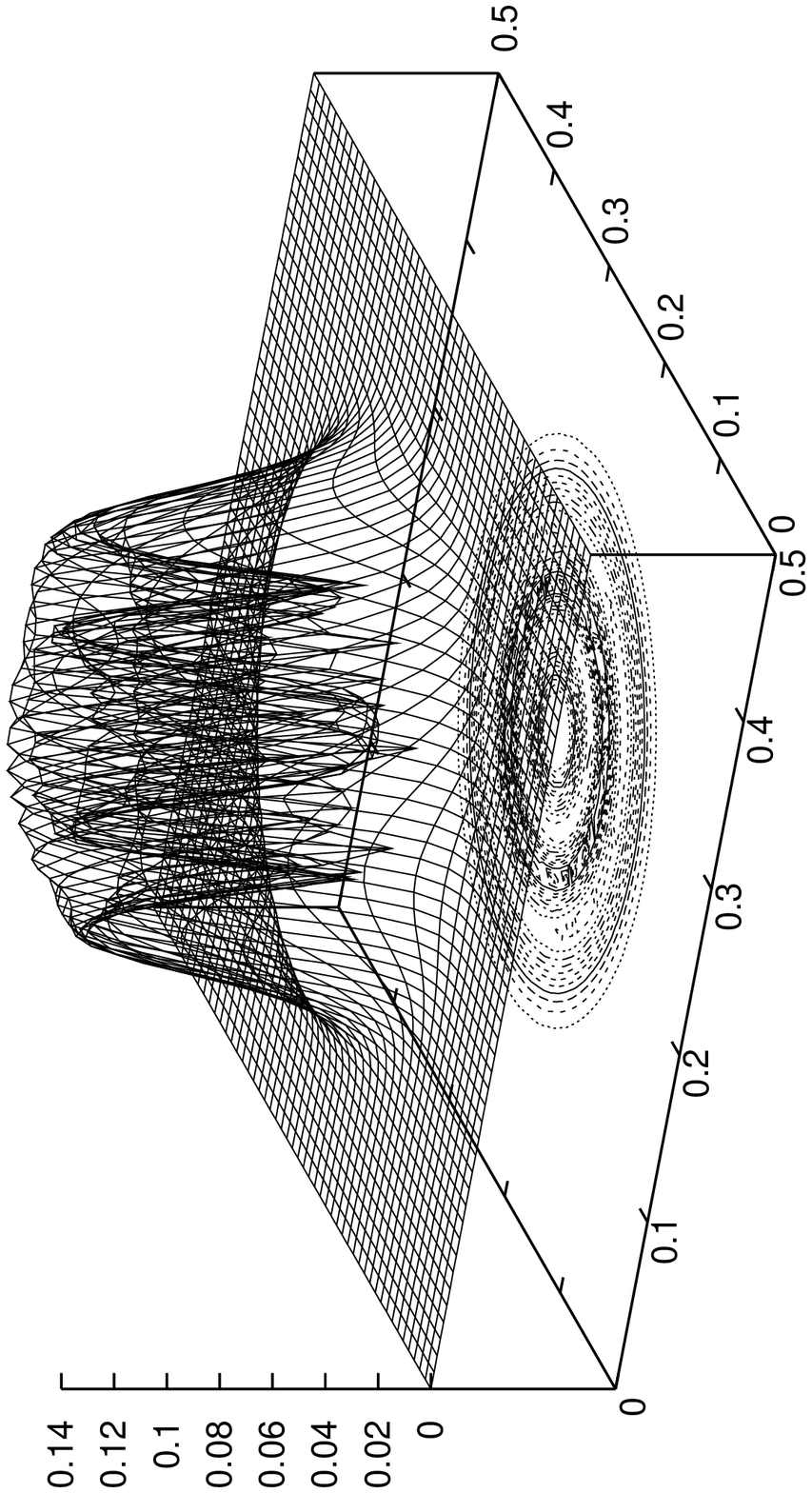}\\
\vspace{-1.5cm}
\includegraphics[width=5cm,angle=-90]{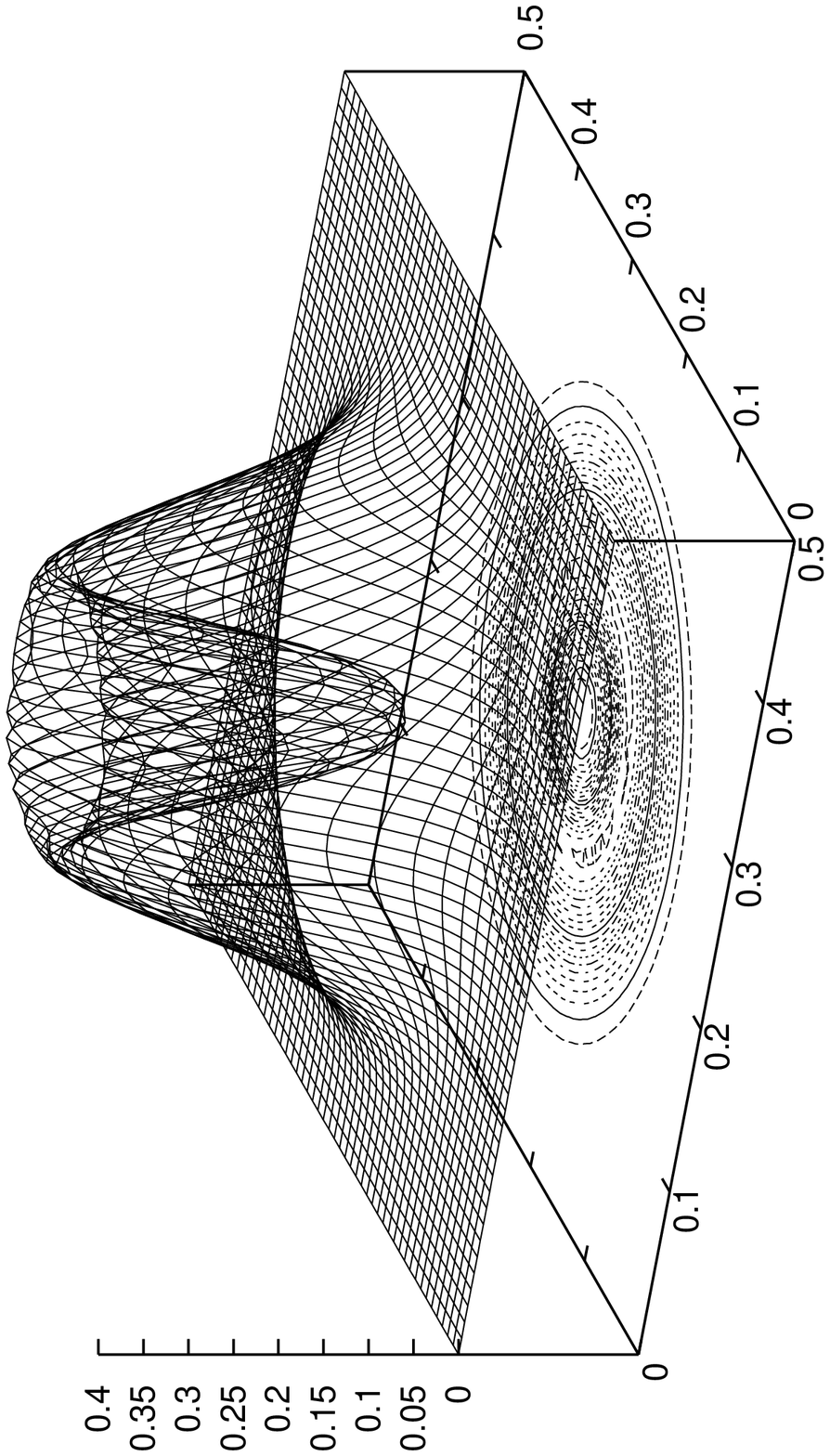}&
\hspace{-1.5cm}
\includegraphics[width=5cm,angle=-90]{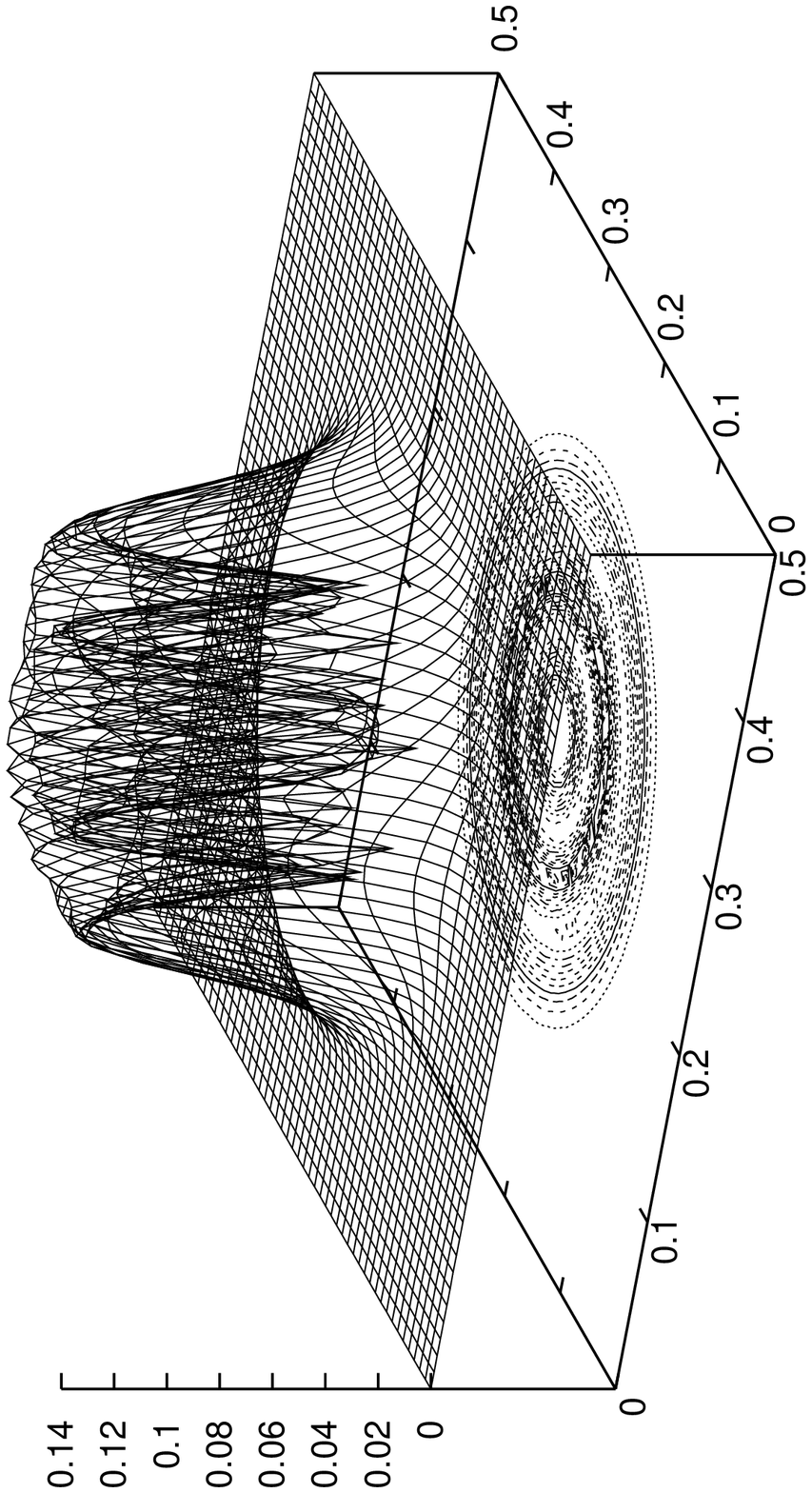}\\
\vspace{-1.5cm}
\includegraphics[width=5cm,angle=-90]{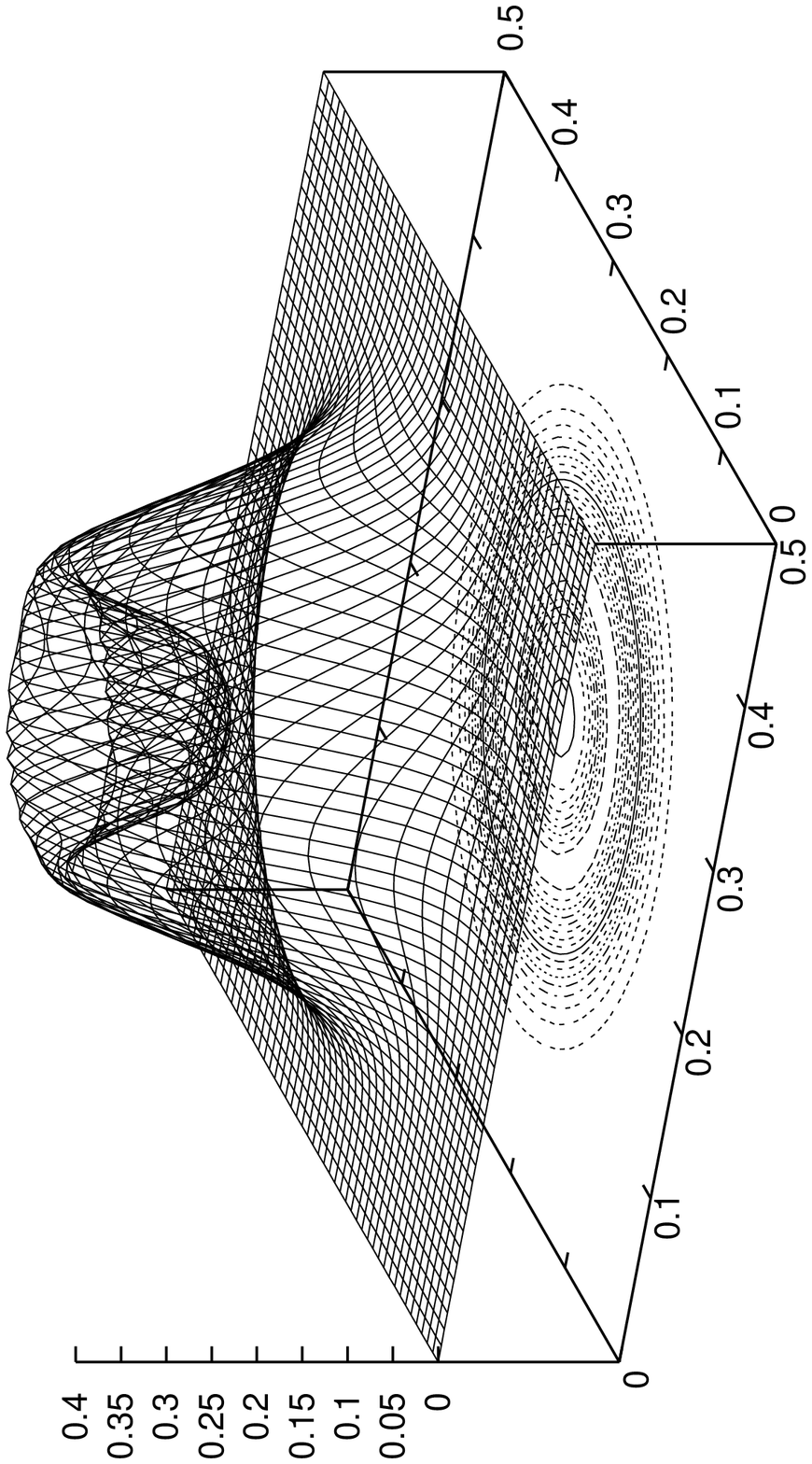}&
\hspace{-1.5cm}
\includegraphics[width=5cm,angle=-90]{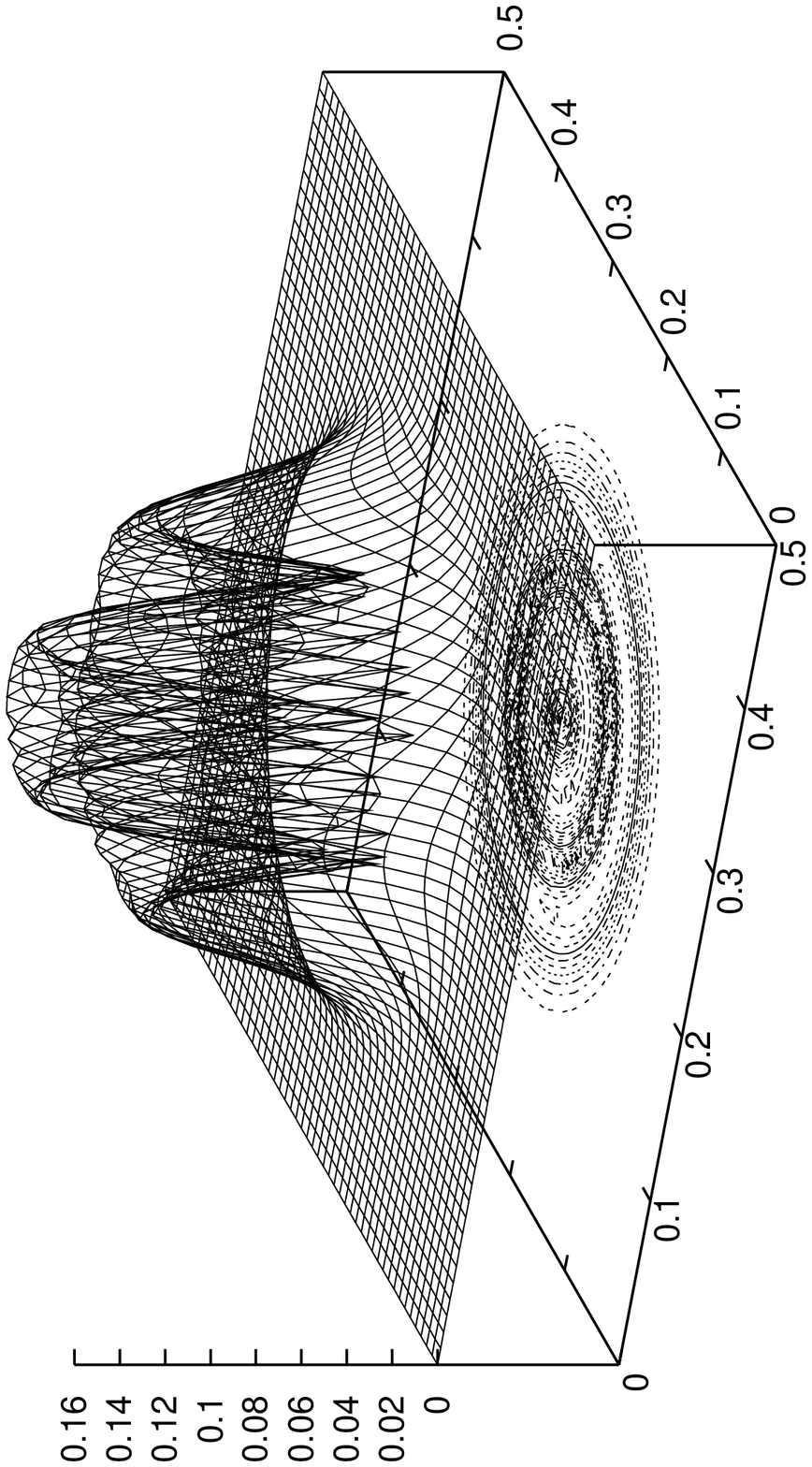}\\
\includegraphics[width=5cm,angle=-90]{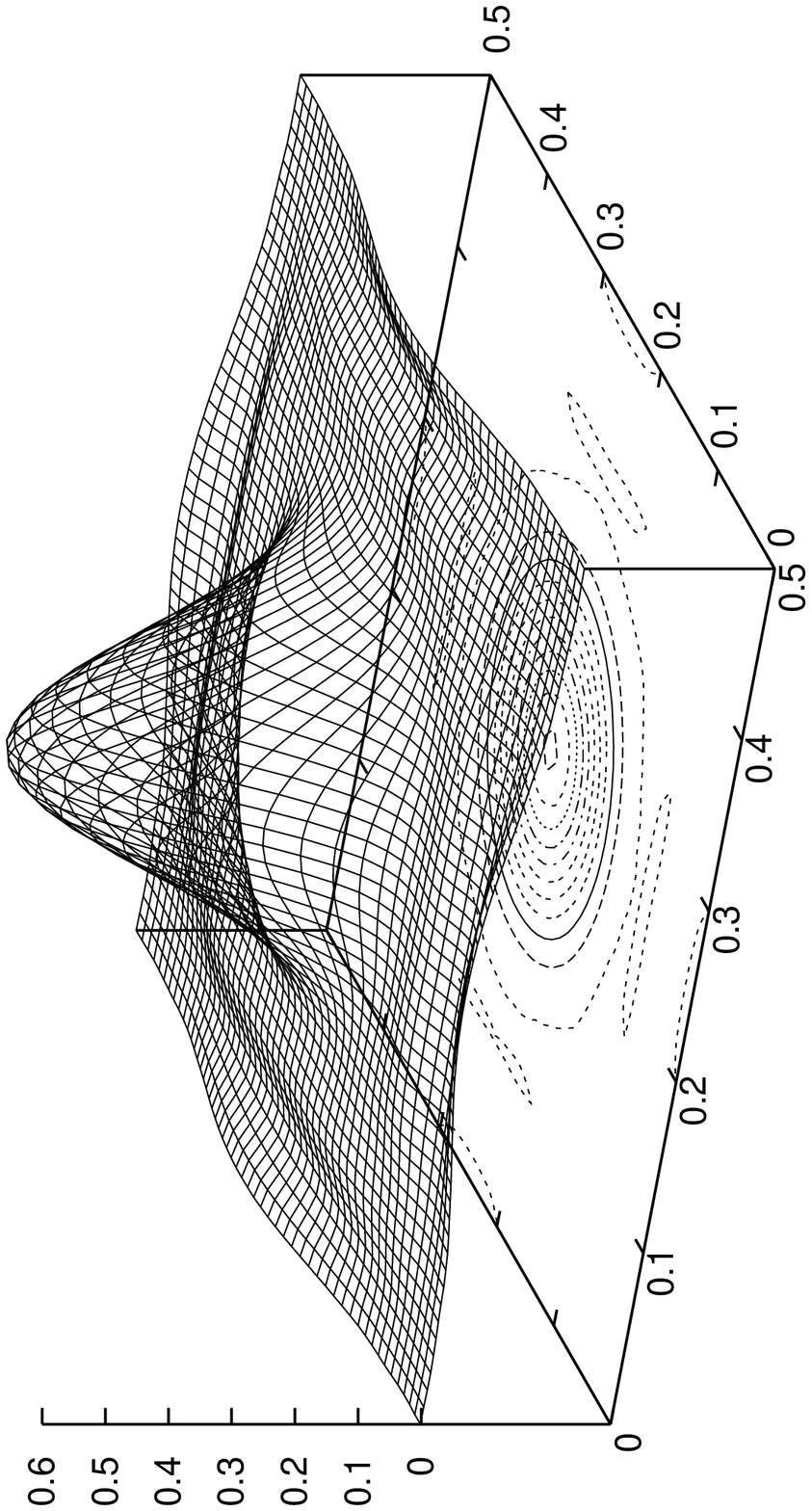}&
\hspace{-1.5cm}
\includegraphics[width=5cm,angle=-90]{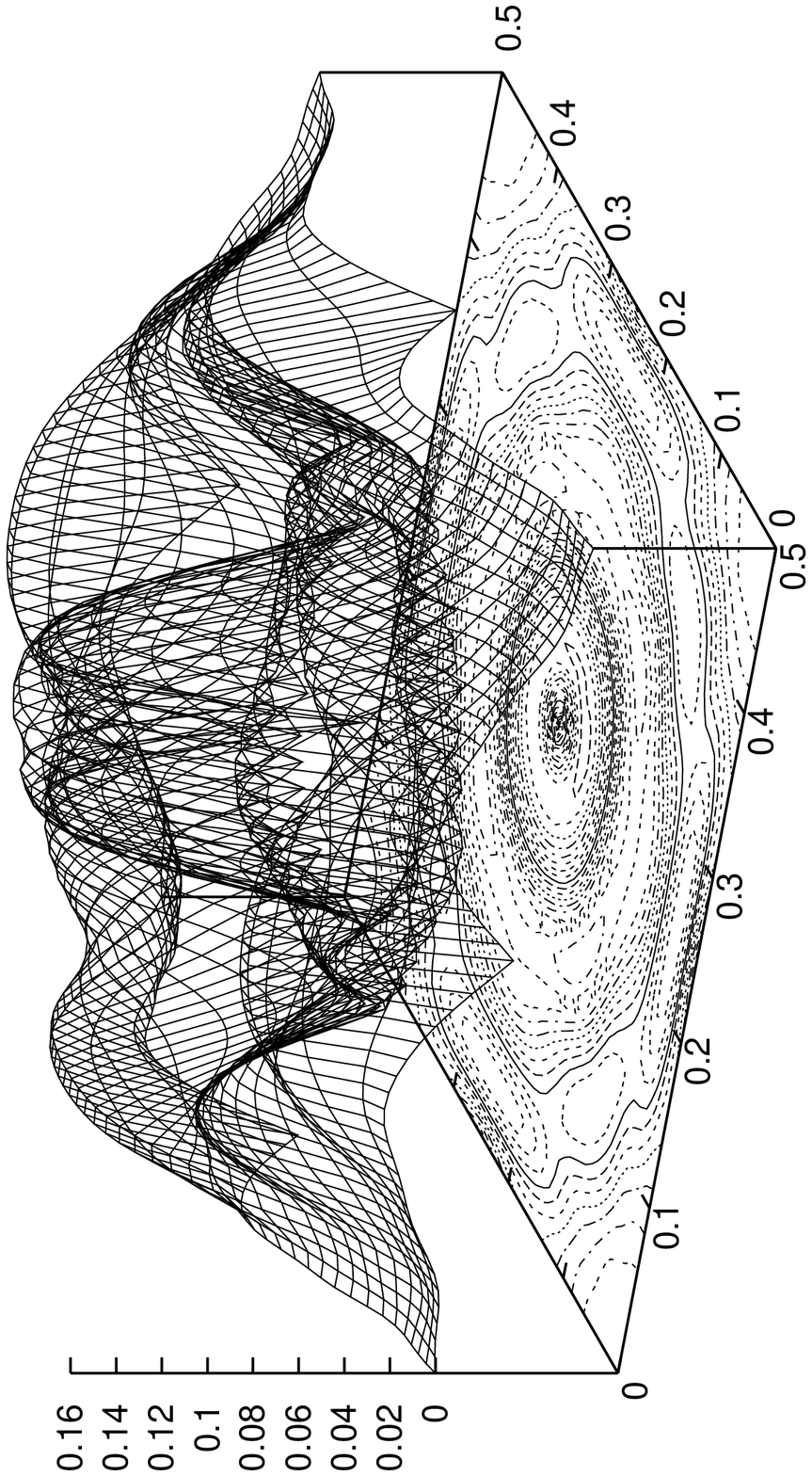}
\end{tabular}
\caption{ \label{iso_rj3} Position (left column) and current density (right column)
at $T=0.05 \sec$ for (resp. from the top) $\eps=0, 0.001, 0.01$ and $0.1$ with $\alpha=0.01$.}
\end{center}
\end{figure}

\begin{figure}[!htp!]
\begin{center}
\includegraphics[width=6cm,angle=-90]{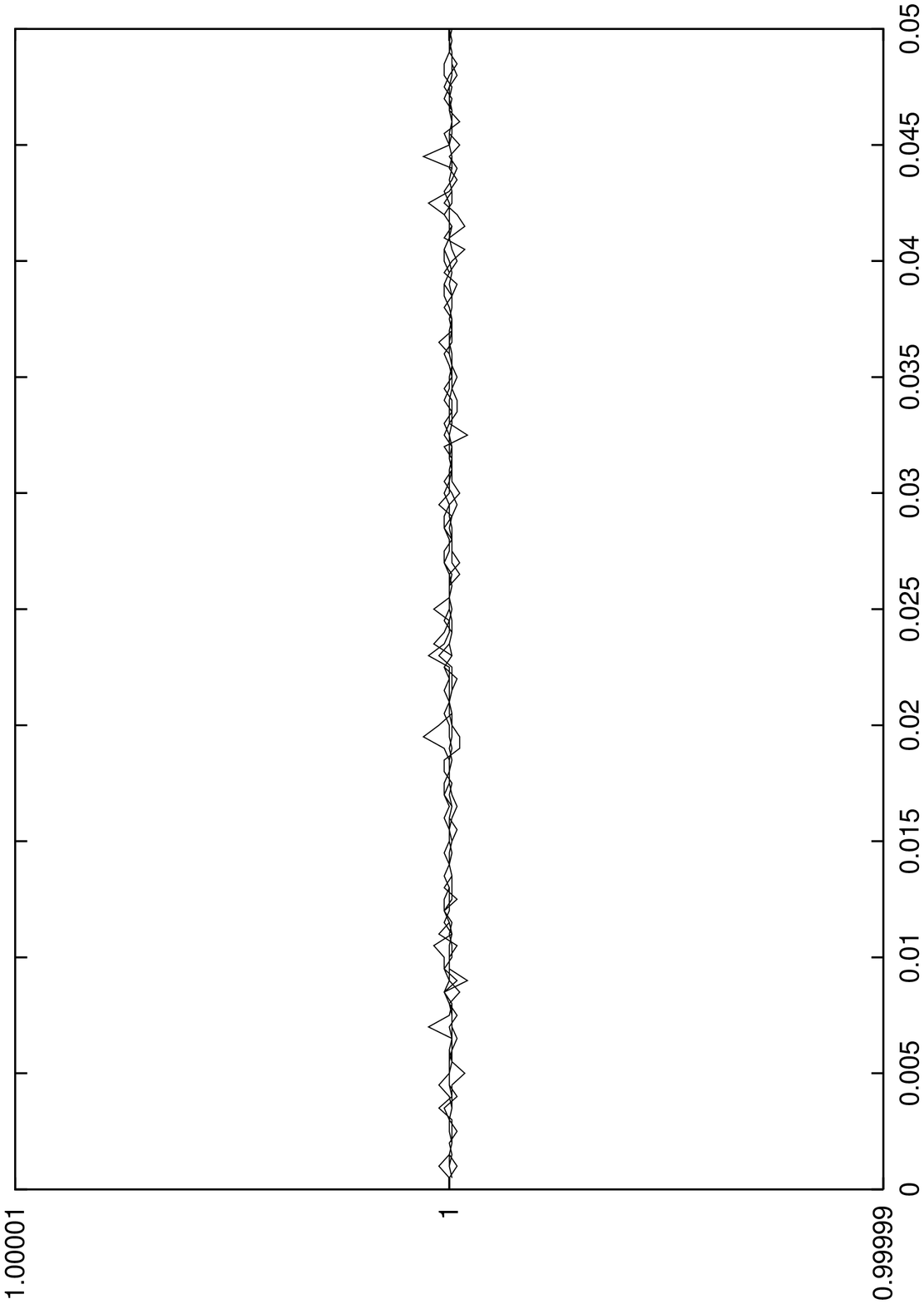}
\includegraphics[width=6cm,angle=-90]{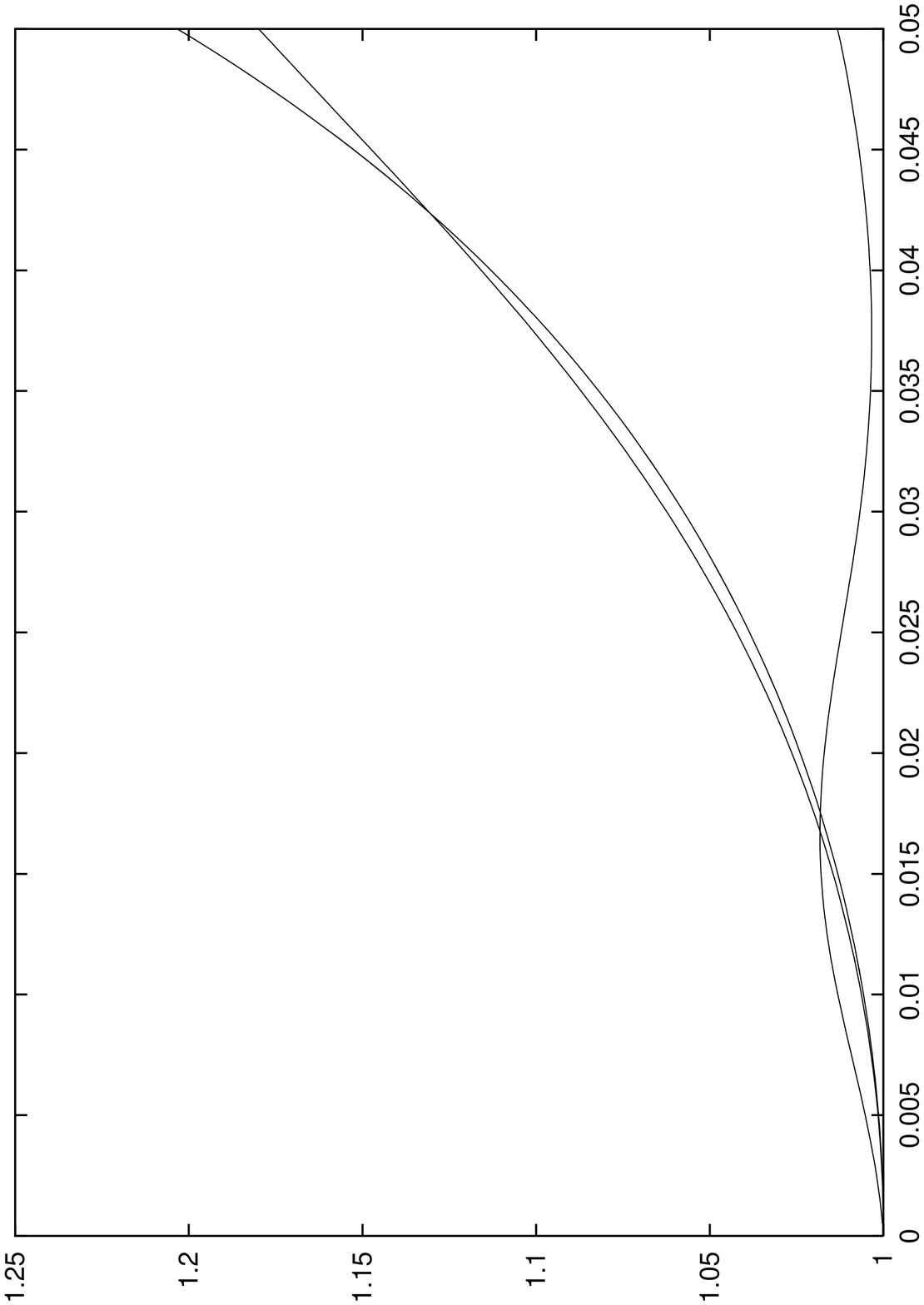}
\includegraphics[width=6cm,angle=-90]{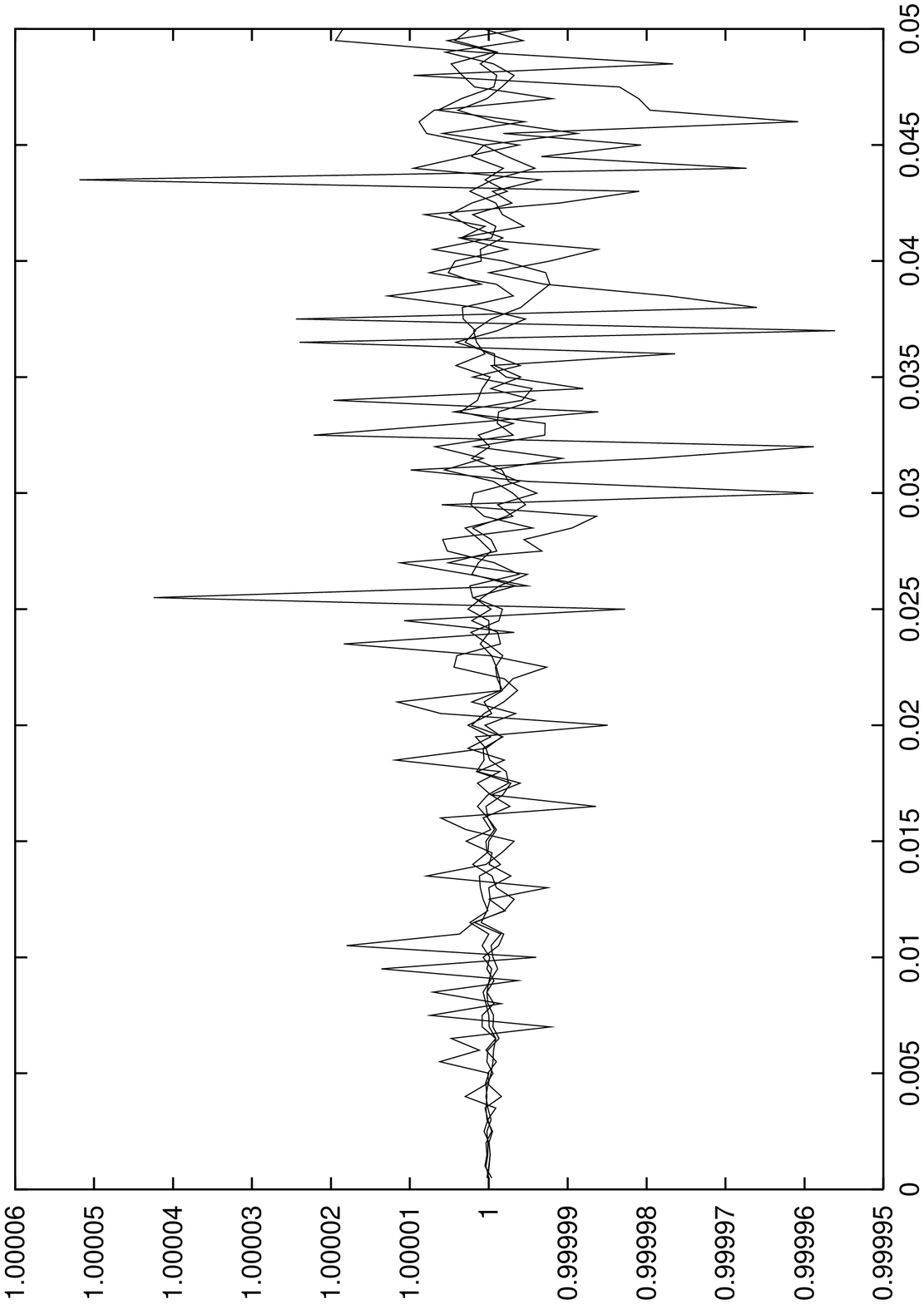}
\end{center}
\caption{ \label{res_roej3} Evolution  in time ($\sec$) of the
  constraints on the
position density, energy and sum of both components of the current
density (resp. from the top) for  $\eps=0, 0.001, 0.01$ and $0.1$
for an initial condition having sign variation in $a^\eps$.}
\end{figure}

Figures~\ref{indic_L1_3} and \ref{indic_L2_3} show
indicators \eqref{eqindic_L1} and \eqref{eqindic_L2}.
%%%%%%%%%%%%%%%%%%%%%%%%%%%%%%%%%  indic L1L2 case 3

\begin{figure}[!htp!]
\begin{center}
%\begin{tabular}{c|c}
\includegraphics[width=7cm,angle=-90]{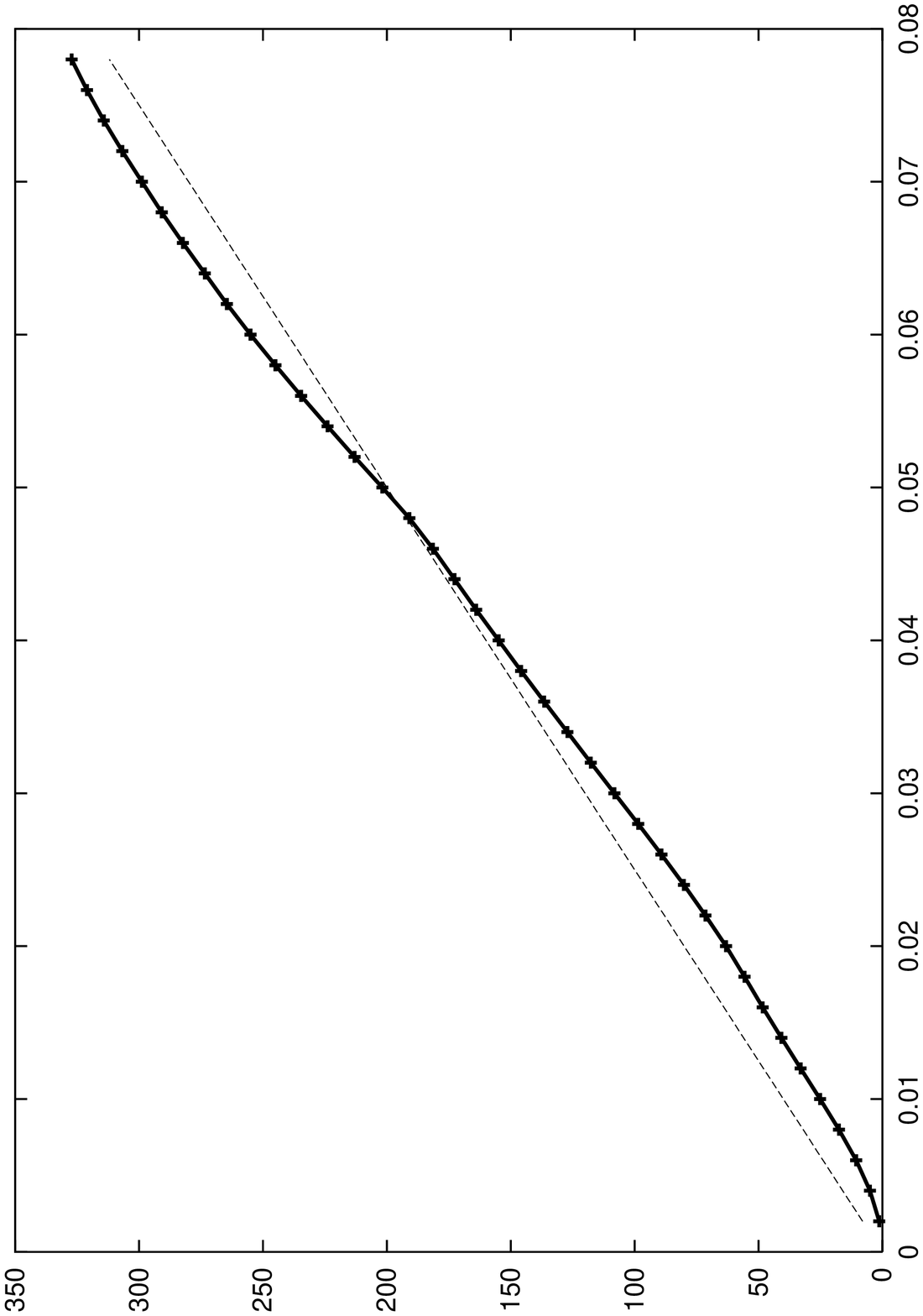}
%\end{tabular}
\end{center}
\caption{ \label{indic_L1_3} Linear dependency of (\ref{eqindic_L1})  at $T=0.05 \sec$ with respect to $\eps$
with an initial condition having sign variation in $a^\eps$.
}
\end{figure}

\begin{figure}[!htp!]
\begin{center}
%\begin{tabular}{c|c}
\includegraphics[width=7cm,angle=-90]{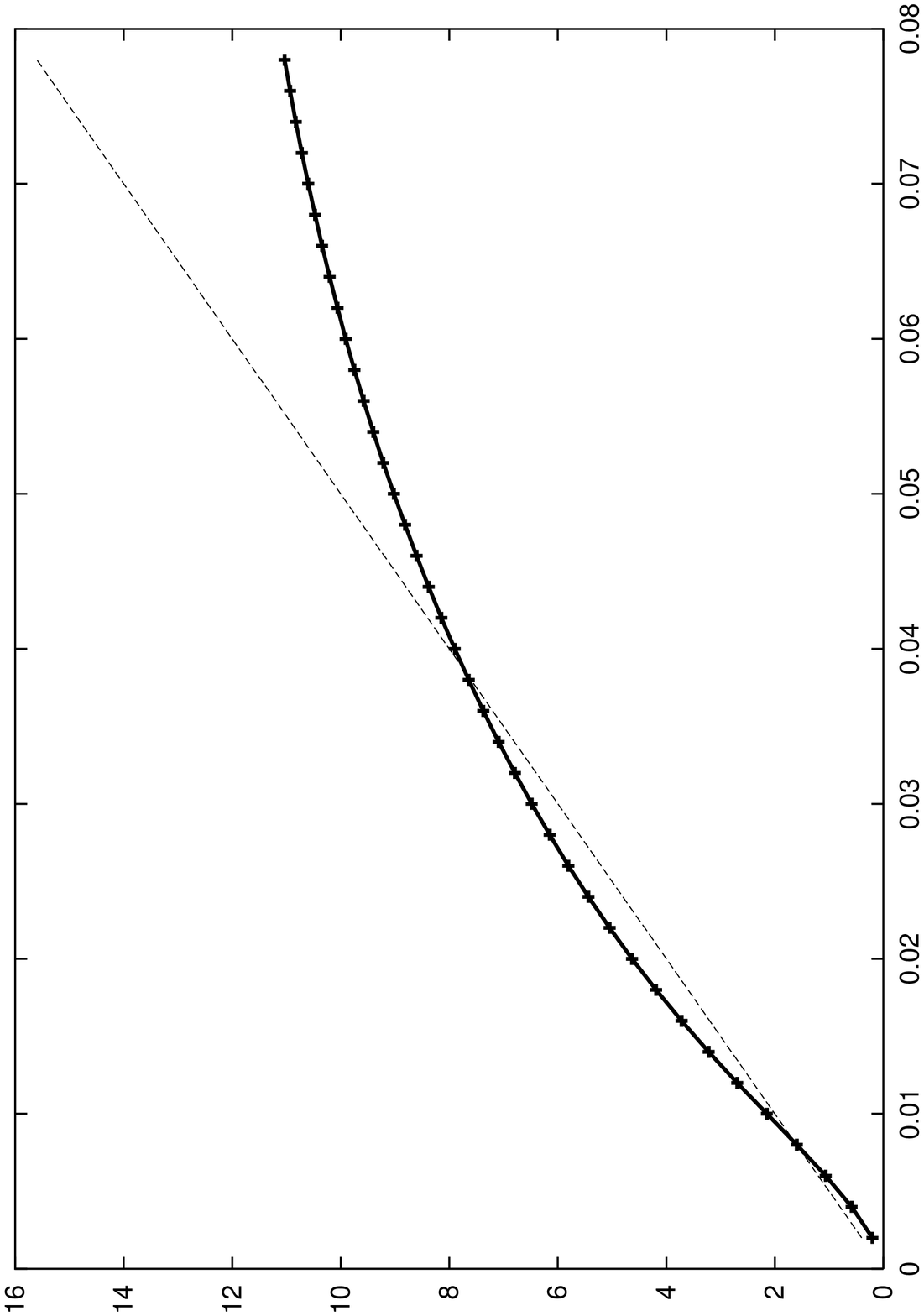}
%\end{tabular}
\end{center}
\caption{ \label{indic_L2_3} Linear dependency of  \eqref{eqindic_L2}
  at $T=0.05 \sec$ with respect to $\eps$ 
with an initial condition having sign variation in $a^\eps$.}
\end{figure}

\smallbreak

To see the behavior of the approach after singularities have
formed in the Euler equation
(for $\eps=0$),  we show in Figure~\ref{iso_rj5} the solution at
$T=0.15 \sec$: the solution for $\eps=0$  has become singular, while the
solution for $\eps>0$ seems to remain smooth. In this case, the
meaning of the figure for 
$\eps=0$ is unclear, since we know that the scheme has dealt with a
singularity. On the other hand,
rapid oscillations  have appeared at least for $\eps=0.1$. For
$\eps=0.001$, the map is not very smooth, as if some oscillations were
not resolved. Recall
however that the time step and the mesh size are independent of
$\eps$: in the presence of rapid oscillations, this strategy has
proven unefficient in \cite{BJM03}, as recalled in
\S\ref{sec:seminum}. This may very well be the case in
Figure~\ref{iso_rj5}.

\begin{figure}[!htp!]
\begin{center}
\begin{tabular}{ll}
\vspace{-1.5cm}
\includegraphics[width=5cm,angle=-90]{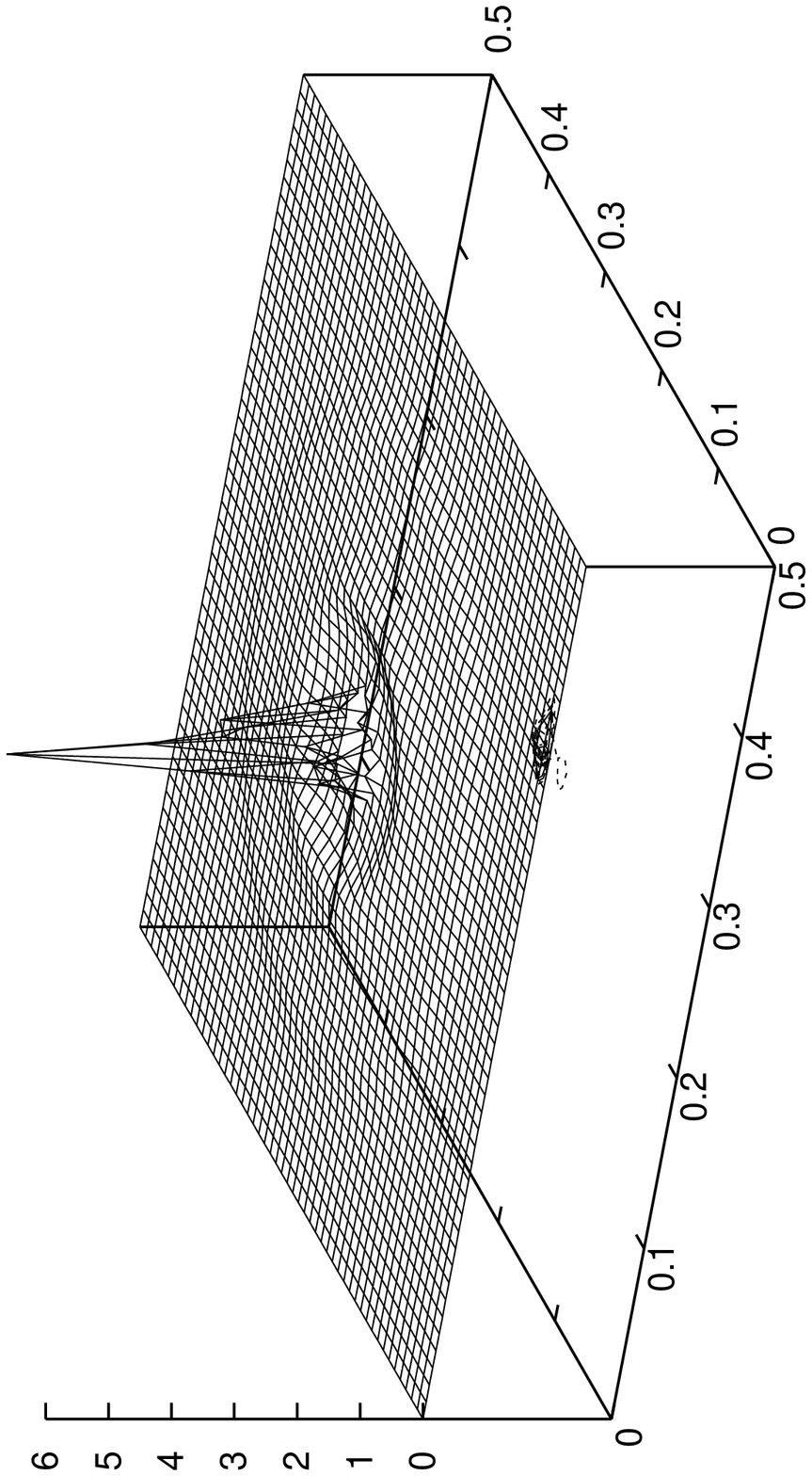}&
\hspace{-1.5cm}
\includegraphics[width=5cm,angle=-90]{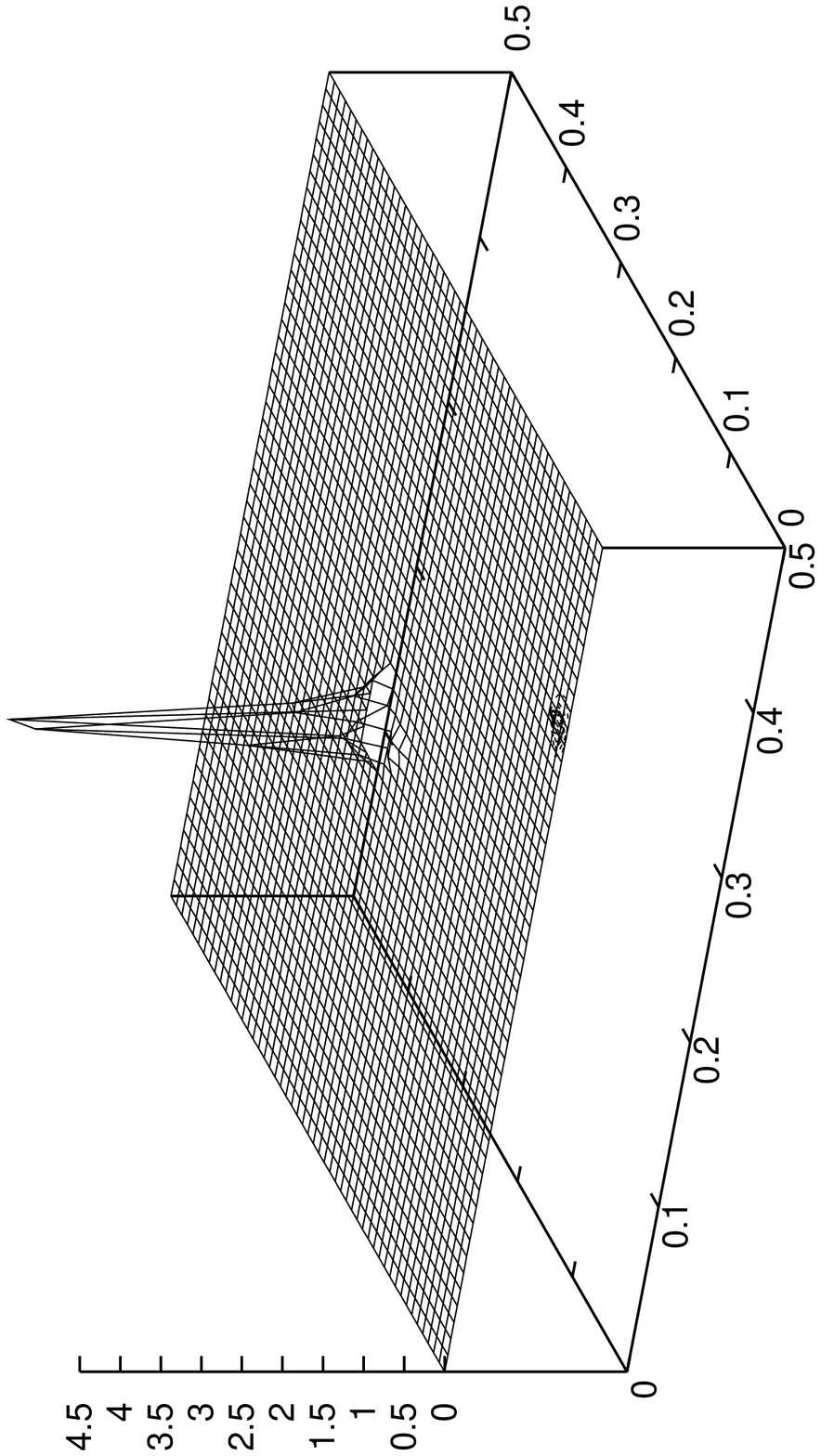}\\
\vspace{-1.5cm}
\includegraphics[width=5cm,angle=-90]{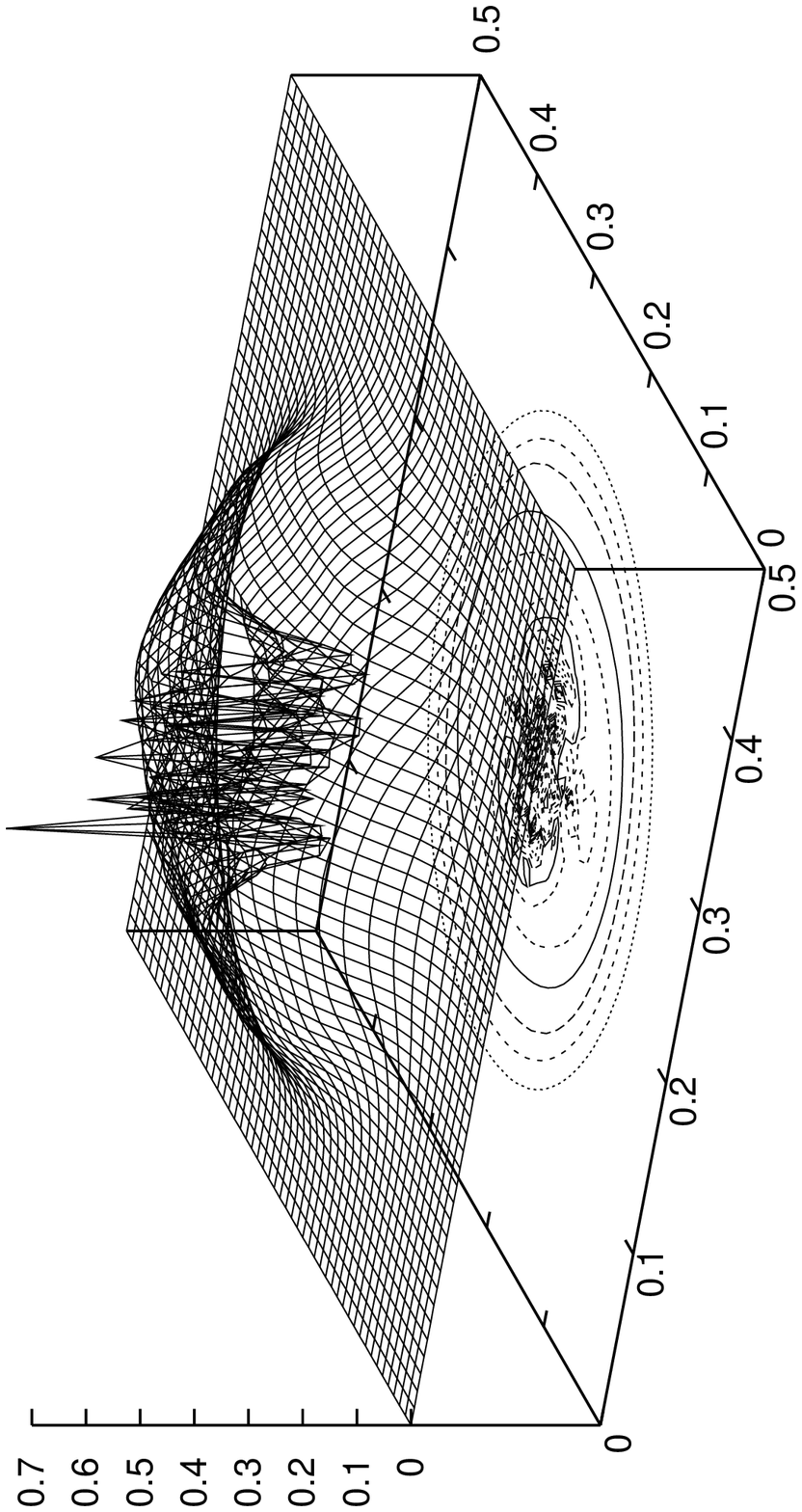}&
\hspace{-1.5cm}
\includegraphics[width=5cm,angle=-90]{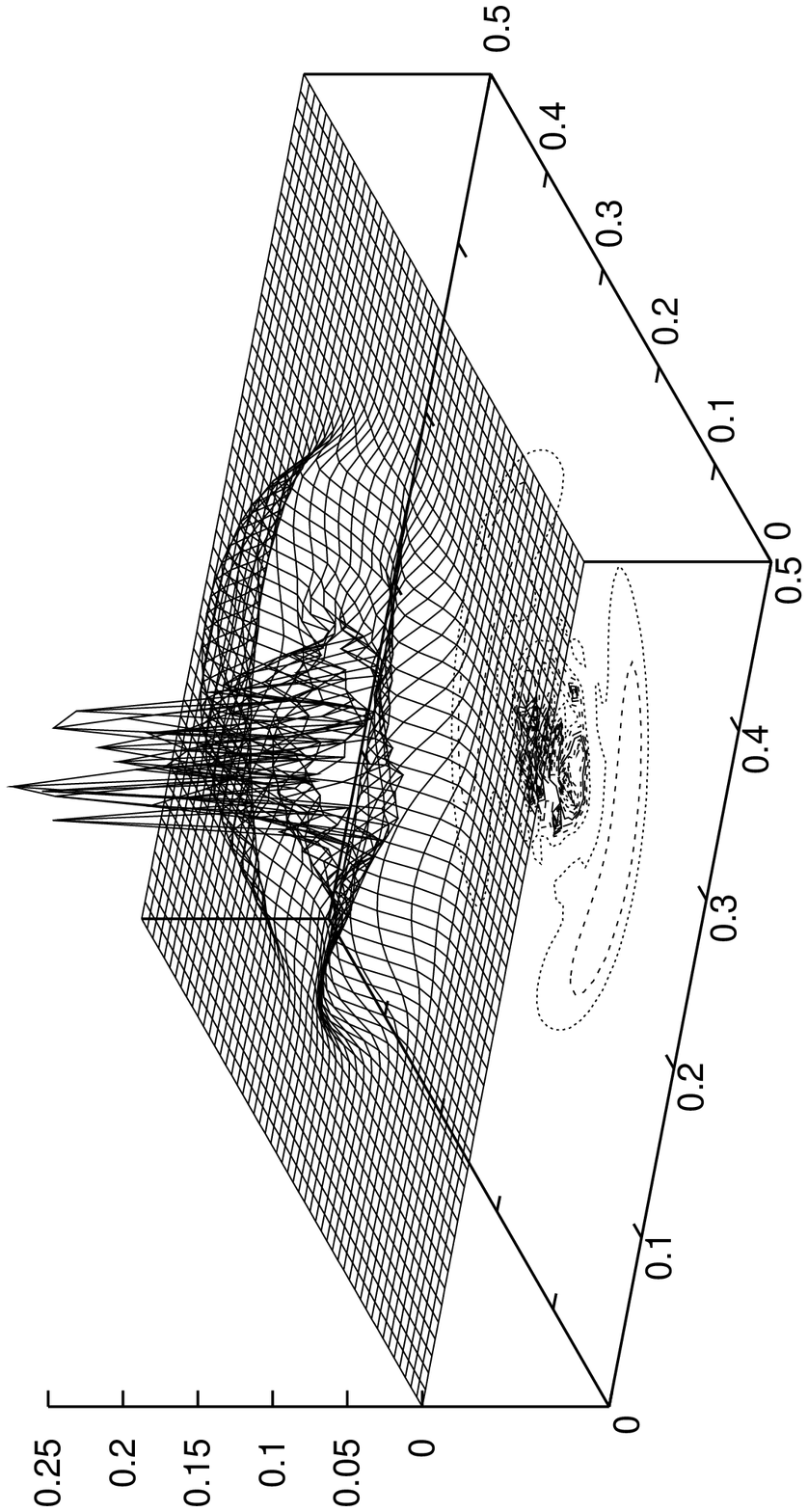}\\
\vspace{-1.5cm}
\includegraphics[width=5cm,angle=-90]{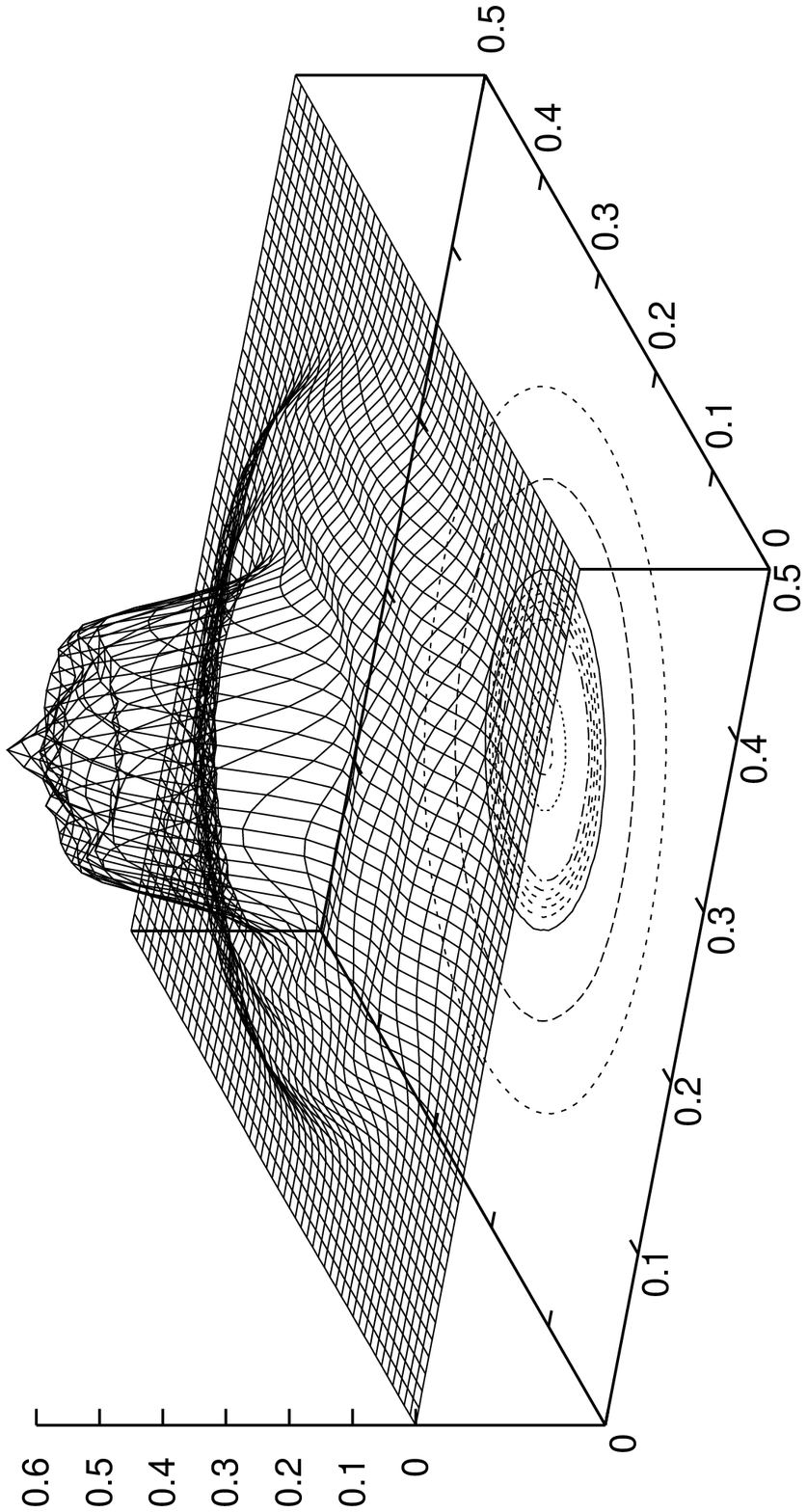}&
\hspace{-1.5cm}
\includegraphics[width=5cm,angle=-90]{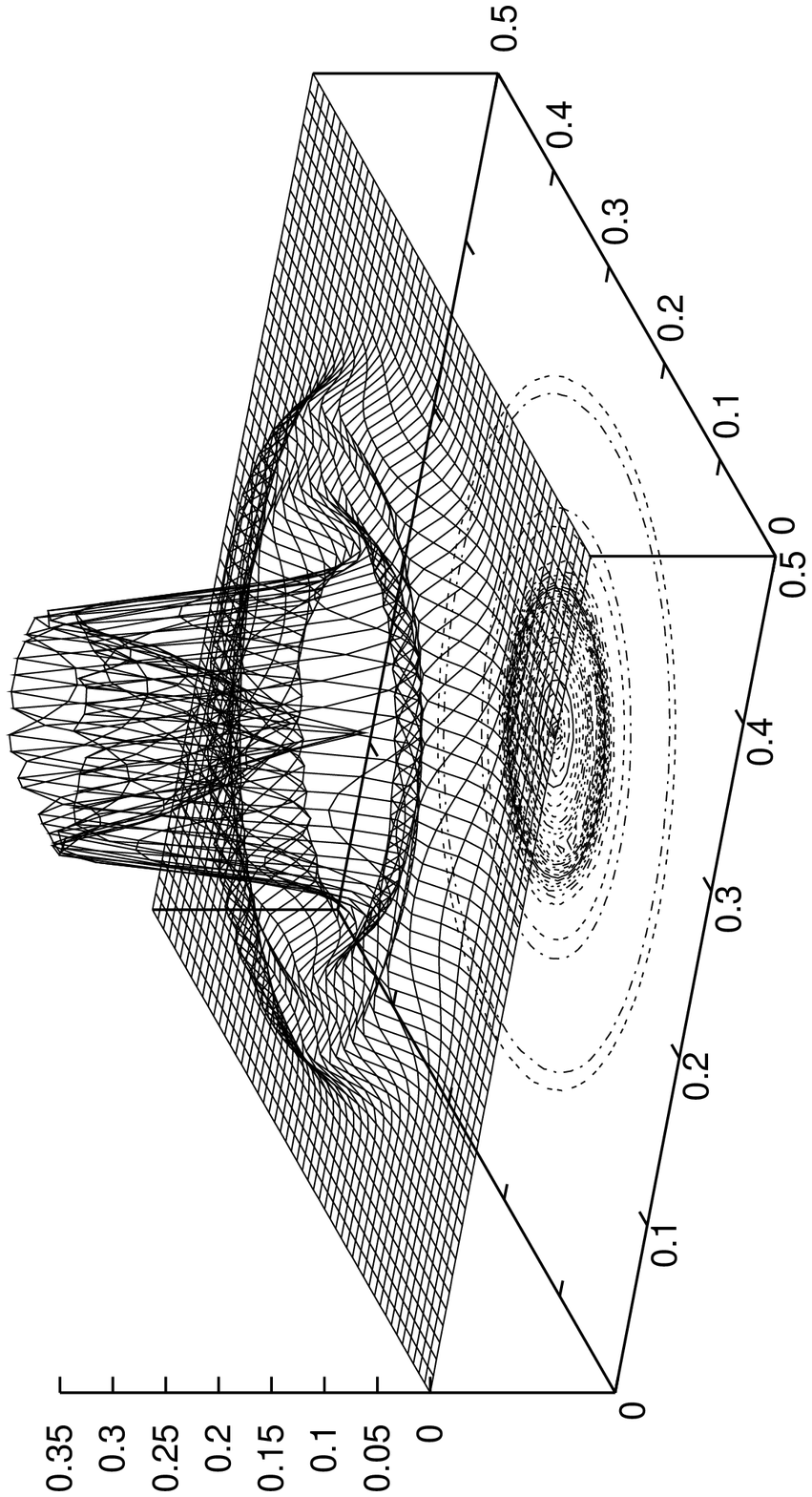}\\
\includegraphics[width=5cm,angle=-90]{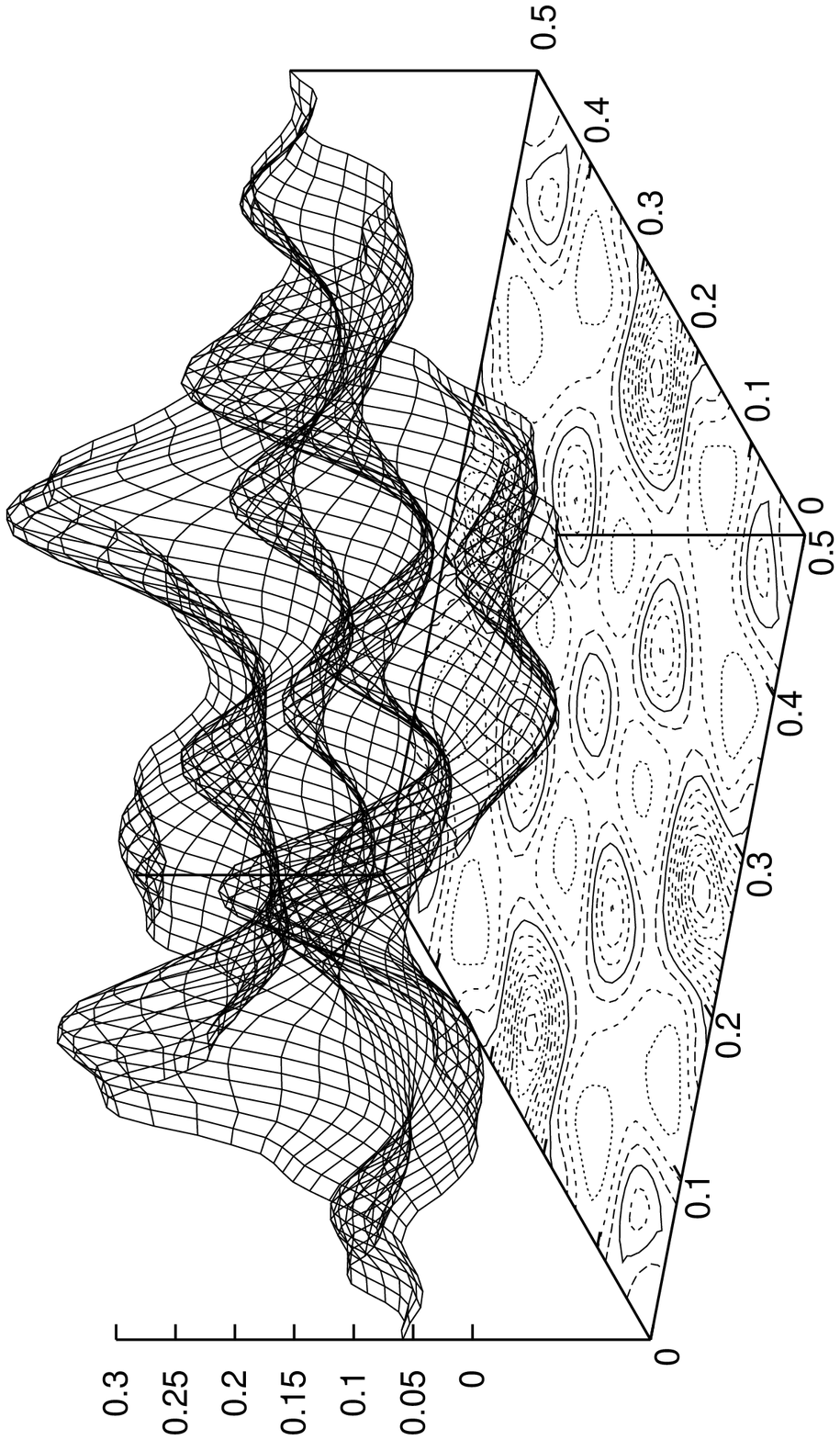}&
\hspace{-1.5cm}
\includegraphics[width=5cm,angle=-90]{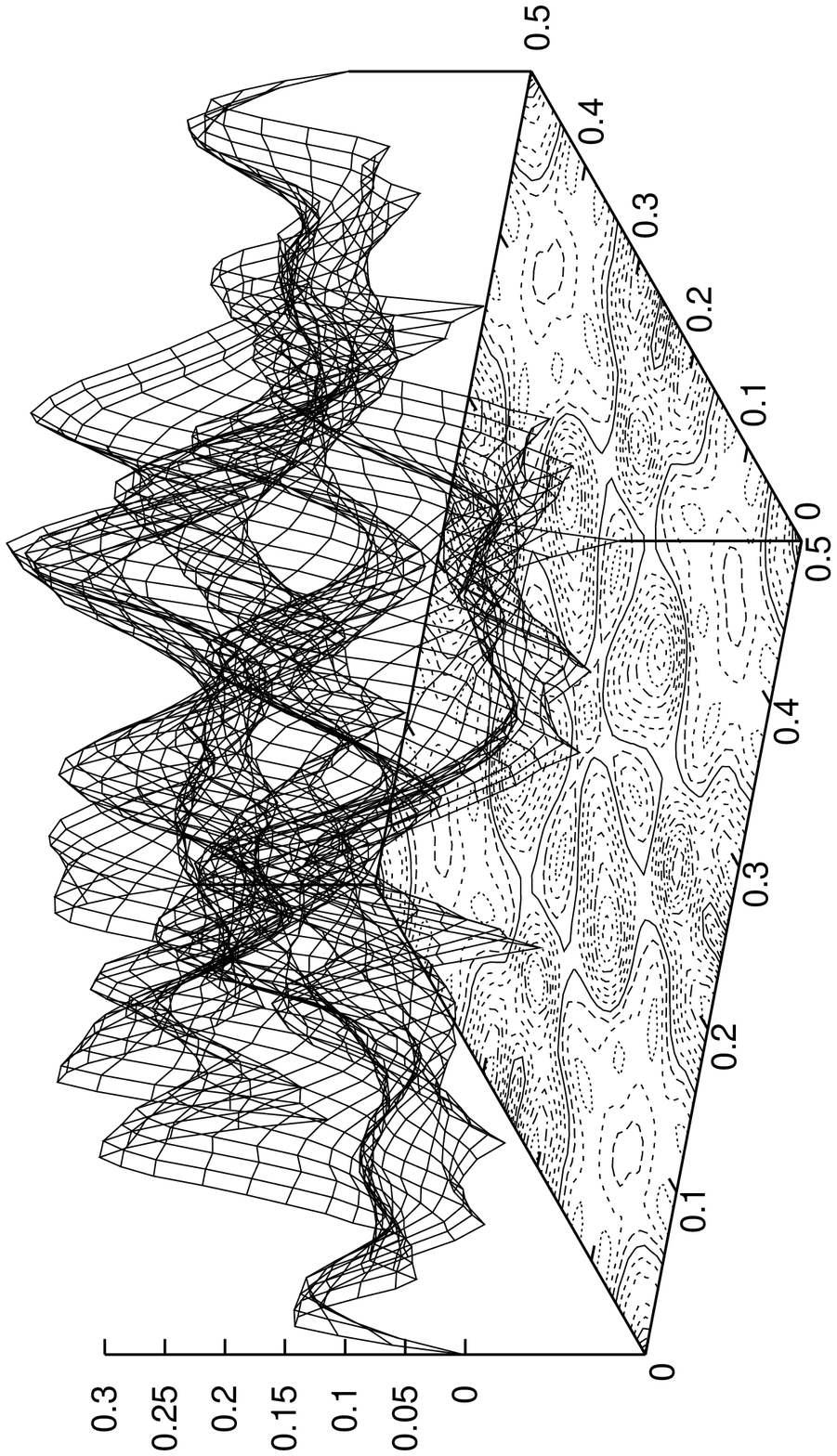}
\end{tabular}
\caption{\label{iso_rj5} Position (left column) and current density
  (right column) 
at $T=0.15 \sec$ for (resp. from the top) $\eps=0, 0.001, 0.01$ and
$0.1$ nearly after the solution blows up for $\eps=0$.} 
\end{center}
\end{figure}

\section{Conclusion}
\label{sec:concl}

We have presented a numerical implementation to compute the solution
of the system \eqref{eq:grenierv}, which is a way to solve
the nonlinear Schr\"odinger equation that is asymptotic
preserving in the semiclassical limit. To reconstruct the wave
function $u^\eps$, the phase $\phi^\eps$ can be computed by a simple
time integration, in view of \eqref{eq:backtophi}.
\smallbreak

The scheme used in this paper is 
explicit, and is therefore rather cheap on the computational level. It
preserves the $L^2$-norm of the solution to the nonlinear
Schr\"odinger equation, and can be adapted in order to conserve the
momentum as well, thanks to simple projections based on rescaling. On
the other hand, the energy is not conserved.  
\smallbreak

With
mesh sizes and time steps which are independent of the Planck constant
$\eps$, we retrieve moreover the main two quadratic observables (position and
current densities) in the semiclassical limit $\eps\to 0$, and before
singularities are formed in the limiting Euler equation, up to an
error of order $\O(\eps)$, as predicted by theoretical
results. The presence of vacuum (zeroes of the position density) is
not a problem in this approach; the case treated in
Section~\ref{sec:changing} is in perfect agreement with this
theoretical result. 
\smallbreak

Finally, these experiments suggest that once the solution to the Euler
equation has developped singularities, the solution to
\eqref{eq:grenierv} may remain smooth, while it becomes rapidly
oscillatory. It is possibly $\eps$-oscillatory in the sense of \cite{GMMP},
but the existence of intermediary scales of oscillation cannot be a
priori ruled out. We do not claim to observe any quantitative result
for post-breakup time, but rather a qualitative phenomenon: a
refinement of time step and mesh size would be needed in view of a
more reliable result after the breakup time. This aspect goes beyond
the scope of the present paper.   
\bibliographystyle{amsplain}
\bibliography{biblio}

\end{document}